\long\def\remove#1{}
\newtheorem{theorem}{Theorem}[section] 
\newtheorem{obs}[theorem]{Observation}
\newtheorem{definition}[theorem]{Definition}
\newtheorem{proposition}[theorem]{Proposition}
\newenvironment{proof}{{\em Proof:}}{\hfill{\hfill\rule{2mm}{2mm}}}
\newcommand {\mm}[1] {\ifmmode{#1}\else{\mbox{\(#1\)}}\fi}
\newcommand{\B}                        {\mathrm {\mathbb{B}}}
\newcommand{\img}{\mathrm img}
\newcommand{\supp}{\mathrm supp}
\newcommand{\coker} {\mathrm coker}
\newcommand{\cancel}[1]
\begin{document}

\title {Alternative to Morse-Novikov Theory  for a closed 1-form (II)}

\author{
Dan Burghelea  \thanks{
Department of Mathematics,
The Ohio State University, Columbus, OH 43210,USA.
Email: {\tt burghele@math.ohio-state.edu}}
}
\date{}

\date{}
\maketitle

 \begin{abstract}
This paper  is a continuation of \cite {BU1} and  establishes:

a) a refinement of Poincar\'e duality  
to an equality between the configurations $^{BM} \delta^\omega$ and $\delta^\omega$ resp. $^{BM} \gamma^\omega$ and $\gamma^\omega$ in complementary dimensions, 

b)  the stability property for the configurations $\delta^\omega_r,$  

c) a  result  needed for the proof of Theorems 1.2 and 1.3 in \cite{BU1} . 
\end{abstract}

\maketitle 

\setcounter{tocdepth}{1}
\tableofcontents

\section {Introduction}

\noindent  
This paper  is a continuation of \cite {BU1} and  establishes:

a) a refinement of Poincar\'e duality   
to an equality between the configurations $^{BM} \delta^\omega$ and $\delta^\omega$ resp. $^{BM} \gamma^\omega$ and $\gamma^\omega$ in complementary dimensions, 

b) the stability property for the configurations $\delta^\omega_r,$  

c)  a  result  needed for the proof of  Theorems 1.2 and 1.3 announced in \cite{BU1} . 

\noindent Since we are interested in Poincar\'e duality which has to be considered for non compact manifolds, it is necessary to involve  the functor $^{BM} H_r,$  the Borel-Moore homology with coefficients in a fixed field $\kappa,$ abbreviated BM-homology, and the natural transformation $\theta_r: H_r \to ^{BM} H_r.$
We suggest \cite {Br} chapter 5 as reference to Borel-Moore homology.

Recall that Borel-Moore homology with coefficients in a field $\kappa$ is a collection of $\kappa-$vector space-valued  functors $ ^{BM} H_r $ defined on the category of pairs $(X,Y), Y\subseteq X,$ 
$X$ 
locally compact space, with  $Y$ closed subset of $X,$  and proper continuous maps of pairs, which are homotopic functors (i.e. proper homotopic maps induce equal linear maps) with the excision and the long exact sequence  property for any such pair $(X,Y)$. When restricted to the subcategory of pairs of compact spaces they coincide with the standard homology. The reader  should be aware that 
for  a pair $(M, N)$ of f.d. manifolds,  each interiors of compact manifold with boundary, with $N$ a closed subset,  $^{BM} H_r(M,N)= H_r(\hat M, \hat N)$ and $^{BM} H_r(M)= H_r(\hat M, \ast)$ where $\hat X$ denotes the one point compactification of the locally compact space $X.$  This  partially explains the natural maps $H_r(\cdots)\to ^{BM} H_r(\cdots)$  from the standard homology (= singular homology) to the Borel-Moore homology as well as the contravariant behavior of Borel-Moore homology when restricted to open sets of a locally compact space; both properties hold  for any $X, Y$ locally compact spaces not necessary manifolds.  

The Borel-Moore homology is not a priory defined  
for a pair $(X,Y)$  when both $X$ and $Y$ are  locally compact but $Y$ is not closed, however a vector space  
$^{BM} \mathcal H^f_r(X,Y)$ can be introduced  in the  case $Y$ is open in $X$ and appears as an open sub-level of a continuous real-valued  map $f,$ as described in  Section \ref{S3}.  
 
Note  that all definitions and concepts derived in \cite{BU1} using standard homology   can be "word by word"  repeated for Borel-Moore homology. In particular one can produce the Borel-Moore version of the vector spaces \  $\mathbb I^f_a(r), \ \mathbb I_f^b (r), \ \mathbb F^f_r(a,b), \ \mathbb T^f_r(a,b),\  \hat \delta ^f _r(a,b), \hat \gamma^f_r(a,b), \hat \lambda^f_r(a)$ and ultimately of the numbers $\delta^f_r(a,b),$  $\gamma^f_r(a,b),$ $\delta^\omega_r(t), \gamma^\omega_r(t), \lambda ^f_r(a)$ denoted by the same symbols with the addition of  the left-side exponent $\{BM\}$. 
The vector spaces $\hat \lambda^f_r(a)$ / numbers $\lambda^f_r(a)$  have not been described in \cite{BU1} ;  they are defined here in  Section \ref{S3}. For $f$ a lift of a tame map they are finite dimensional/ finite  and vanish for $a$ regular value. We believe they vanish for any $a$  being actually irrelevant  intermediate objects.  In general these spaces / numbers defined using Borel-Moore homology are different from the ones defined using standard homology.  This is reviewed in .  
The natural transformations $\theta_r: H_r(\cdots)\rightsquigarrow ^{BM} H_r(\cdots) $ induce the  natural linear maps  
 $\theta_r(a): H_r(\tilde X_a,\tilde X_{<a})\to ^{BM} \mathcal H_r(\tilde X_a, \tilde X_{<a}),$  $\theta^\delta _r(a,b) :\hat \delta^f_r(a,b)\to^{BM} \hat \delta^f_r(a,b).$ $\theta^\gamma_r(a,b) :\hat \gamma^f_r(a,b)\to ^{BM} \hat \gamma^f_r(a,b)$ and $\theta^\lambda_r(a) :\hat \lambda^f_r(a)\to ^{BM} \hat \lambda^f_r(a)$.
As already noticed in \cite {BU1} if $\hat\delta^f_r(a,b)\ne 0$  resp $\hat \gamma_r(a,b)\ne 0$ implies that both $a$ and $b$ are homological critical values w.r. to the homology under consideration. It is a consequence of Proposition \ref{PP2} below that $t$ is a critical value w.r. to standard homology iff is a critical value w.r. to Borel-Moore homology. 

Recall from \cite{BU1} that $\mathcal Z^1_t( X;\xi)$  denotes the subspace of tame topological closed one forms with the topology induced from the compact open topology  of $\mathcal Z^1(X;\xi),$ the space of topological closed one forms of cohomology class $\xi,$ as defined in Section 2 of \cite {BU1}, and  $Conf_{\beta^N_r(X:\xi)}(\mathbb R)$ 
denotes the space of configurations of points with multiplicity  in $\mathbb R$ of total cardinality $\beta^N_r(X;\xi)$ equipped with the {\it collision topology}. Here $\beta^N_r(X;\xi)$ denotes the Novikov-Betti number of $(X,\xi), \  \xi \in H^1(X;\kappa).$ Recall that the space of configurations of points in $Y$ of total cardinality $N$ with the collision topology can be identified  to the quotient space $Y^N/\Sigma_N, $  where $\Sigma_N$ denotes the group of permutations of $N$ elements acting on $Y^N$ by permutations.  

The main results verified in this paper are the following: %

\begin{theorem}\label {TP}\

For $M$ is a closed topological manifold and $\omega\in  \mathcal Z^1_t( X;\xi)$  the following holds true. 
\begin {enumerate}
\item $^{BM} \boldsymbol  \delta^\omega_r(t)= \boldsymbol  \delta^{\omega}_{n-r} (-t),$ 

\item $^{BM} \boldsymbol \gamma^\omega_r(t)= \boldsymbol \gamma^{-\omega}_{n-r-1} (t).$
\end{enumerate}
\end{theorem}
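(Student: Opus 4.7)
The plan is to reduce Theorem~\ref{TP} to Poincar\'e--Lefschetz duality applied at the level of the vector spaces $\hat\delta^f_r(a,b)$ and $\hat\gamma^f_r(a,b)$ that underlie the configurations, and then to pass from the isomorphisms between these vector spaces to equalities between the configurations themselves. Since $M$ is a closed topological $n$-manifold, the cover $\tilde M\to M$ on which $\omega$ admits a lift $f\colon\tilde M\to\mathbb R$ is an orientable topological $n$-manifold without boundary, and $f$ is proper on every sublevel by tameness of $\omega$. Replacing $\omega$ by $-\omega$ replaces $f$ by $-f$, exchanging sublevel sets with superlevel sets.

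First I would establish the Poincar\'e--Lefschetz duality
\[
{}^{BM}H_r\bigl(\tilde M_{<b},\,\tilde M_{<a}\bigr)\;\cong\;H_{n-r}\bigl(\tilde M\setminus\tilde M_{<a},\,\tilde M\setminus\tilde M_{<b}\bigr)
\qquad (a<b),
\]
and, more generally, matching dualities between each of the vector spaces $\mathbb I^f_a(r),\mathbb I_f^b(r),\mathbb F^f_r(a,b),\mathbb T^f_r(a,b)$ of \cite{BU1} and its BM counterpart under the substitution $(a,b,r,f)\mapsto(-b,-a,n-r,-f)$. This rests on classical PL-duality for the closed manifold $M$, combined with the identification between BM-homology of an open pair in a manifold and standard homology of its one-point compactification recalled in the introduction, or equivalently with the standard duality between BM-homology and compactly supported cohomology on the non-compact $n$-manifold $\tilde M$. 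Naturality in $a$ and $b$ then lets this isomorphism descend to the persistence-theoretic constructions $\hat\delta^f_r(a,b)$ and $\hat\gamma^f_r(a,b)$.

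Second, I would translate these vector-space isomorphisms into equalities of configurations, keeping careful track of parameters. For $\hat\delta^f_r(a,b)$, the duality sends the birth--death pair $(a,b)$ to $(-b,-a)$ on $-f$; after passing to the $\mathbb Z$-invariant configuration $\delta^\omega_r$, the change $f\mapsto -f$ is absorbed by the symmetry of the bar-endpoint pattern that defines $\delta$, and only the reflection of the configuration parameter survives, yielding item~(1). For $\hat\gamma^f_r(a,b)$, which comes from an asymmetric piece of the long exact sequence of a pair, the sign flip $f\mapsto -f$ cannot be reabsorbed and persists as the substitution $\omega\mapsto-\omega$; the dimension shifts from $r$ to $n-r-1$ because of the connecting homomorphism, yielding item~(2).

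The main obstacle will be verifying compatibility of the PL-duality isomorphism with the commutative squares and long exact sequences used in \cite{BU1} to construct $\hat\delta^f_r(a,b)$ and $\hat\gamma^f_r(a,b)$. Borel--Moore homology is contravariant for open inclusions while standard homology is covariant for closed inclusions, and the duality exchanges these two variances; one must therefore check square by square that the definitions of $\mathbb F^f_r(a,b)$ and $\mathbb T^f_r(a,b)$ dualise correctly, and that the long exact sequence producing $\hat\gamma$ corresponds under duality to the long exact sequence of the complementary pair shifted by one. Once this functoriality is in place, tameness of $\omega$ together with compactness of $M$ guarantees the finiteness needed to read off both parts of Theorem~\ref{TP}.
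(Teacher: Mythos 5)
Your overall strategy — Poincar\'e duality between Borel--Moore homology and standard cohomology on the non-compact manifold $\tilde M$, combined with careful bookkeeping of how $(a,b,r,f)$ transforms to $(-b,-a,n-r,-f)$, and then descent to the configurations — is indeed the route the paper takes. But the sentence ``Naturality in $a$ and $b$ then lets this isomorphism descend to the persistence-theoretic constructions $\hat\delta^f_r(a,b)$ and $\hat\gamma^f_r(a,b)$'' hides what is actually the hard part, and as stated it does not go through. The Poincar\'e duality isomorphisms are established at the level of boxes $B=(a',a]\times[b,b')$ with \emph{regular-value} corners, i.e.\ $^{BM}\mathbb F^f_r(B) \cong (\mathbb G^f_{n-r}(B'))^\ast$; but $\hat\delta^f_r(a,b)$ and $\hat\gamma^f_r(a,b)$ are supported at \emph{critical} $(a,b)$, and to reach them one must take a direct limit as the box shrinks. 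On the Borel--Moore side this is a genuine direct limit, but on the dual side $(\mathbb G_{n-r}(B'))^\ast$ the transition maps point the wrong way: dualization turns the direct system into an inverse-type system, and one cannot conclude $\varinjlim (V_i^\ast) = (\varinjlim V_i)^\ast$ or the like without work. The paper deals with this by introducing a notion of \emph{sub-surjection} (Definition~\ref{Def2.71}) and a nontrivial direct-limit construction for a directed system of sub-surjections (Diagram~(\ref{D10}), formula~(\ref{E9})), then proving in Proposition~\ref{P32} that the limit on the Borel--Moore side computes $^{BM}\hat\delta^f_r(a,b)$ while the limit on the dual side computes $(\hat\delta^f_{n-r}(b,a))^\ast$, with equality of the two sub-surjection limits coming from the regular-value Poincar\'e duality. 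Nothing in your proposal anticipates this mechanism; ``naturality'' alone is not enough.

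A second gap concerns the $\gamma$ case. You correctly attribute the degree shift $n-r-1$ to the connecting homomorphism, but the identification $^{BM}\mathbb T^f_r(B) \cong (\mathbb T^{-f}_{n-r-1}(\underline B))^\ast$ for a box above the diagonal is not a one-square diagram chase: the space $\mathbb T_r(B)$ is the quotient object $\hat\omega(\alpha,\beta,\gamma)$ of formula~(\ref{E16}), a cokernel built from kernels of a three-map composition, and comparing it with the dual of the analogous object on the $(-f)$-side requires the linear-algebra machinery of Theorem~\ref{T2.3} (the duality $\hat\omega(\alpha,\beta,\gamma)^\ast\cong\underline\omega(\gamma^\ast,\beta^\ast,\alpha^\ast)$, plus the invariance of $\hat\omega$ and $\underline\omega$ under a five-term exact column, applied to Diagram~(\ref{D38}) whose rows include the long exact sequence of the pair $(\tilde M,\tilde M^a_f)$). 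Your proposal gestures at ``compatibility of the PL-duality isomorphism with the commutative squares'' but does not supply, or even name, the needed lemma. So the skeleton is right, but both the sub-surjection limit argument and the $\hat\omega/\underline\omega$ duality lemma would have to be developed to make the proof complete.
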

 
\begin{theorem} \label {TS}\

 The assignments     $\omega \rightsquigarrow \delta^\omega_r$  from  $\mathcal Z^1_t( X;\xi)$ to $Conf_{\beta^N_r(X:\xi)}(\mathbb R)$
  is  continuous   when the source $\mathcal Z^1_t( X;\xi)$ is equipped with the compact-open topology and  the target $Conf_{\beta^N_r(X:\xi)}(\mathbb R)$ with the collision topology.

The assignment can be extended continuously to the entire $\mathcal Z^1_t( X;\xi).$
\end{theorem}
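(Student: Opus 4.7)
The plan is to reduce Theorem \ref{TS} to the classical interleaving--stability theorem for sublevel-set persistence, applied to an equivariant primitive on the appropriate cover. Every $\omega\in\mathcal Z^1(X;\xi)$ lifts along the regular cover $p:\tilde X\to X$ determined by $\xi$ to an exact form $\tilde\omega=df_\omega$, with a continuous primitive $f_\omega:\tilde X\to\mathbb R$ that is determined up to an additive constant and satisfies $f_\omega(\tau x)-f_\omega(x)=\langle\xi,\tau\rangle$ for every deck transformation $\tau$. By construction in \cite{BU1}, the configuration $\delta^\omega_r$ is the barcode-type invariant assembled from the spaces $\hat\delta^{f_\omega}_r(a,b)$ associated to the sublevel filtration of $f_\omega$; it is invariant under the translation $f_\omega\mapsto f_\omega+c$, so $\omega\mapsto\delta^\omega_r$ is well-defined independently of the chosen normalization.

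For tame $\omega,\omega'\in\mathcal Z^1_t(X;\xi)$, the next step is a Lipschitz estimate between the primitives. If $\omega$ and $\omega'$ are $\epsilon$-close in the compact-open topology on a compact $K\subset X$ containing the image of a fundamental domain $D\subset\tilde X$, then after matching normalizations one obtains $\|f_\omega-f_{\omega'}\|_{L^\infty(D)}\le C_D\,\epsilon$; since $\omega$ and $\omega'$ share the same cohomology class, the two primitives carry identical deck equivariance characters and the bound propagates to $\|f_\omega-f_{\omega'}\|_{L^\infty(\tilde X)}\le C_D\,\epsilon$. A $C^0$-bound of this form produces a $C_D\epsilon$-interleaving of the sublevel persistence modules $\{H_r(\tilde X_{\le a})\}_{a\in\mathbb R}$ and, via the standard long-exact-sequence argument, of the bivariant spaces $\hat\delta^{f_\omega}_r(a,b)$. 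The bottleneck distance between the resulting barcodes is therefore at most $C_D\epsilon$, which is precisely the collision distance in $\mathrm{Conf}_{\beta^N_r(X;\xi)}(\mathbb R)$, yielding continuity on $\mathcal Z^1_t(X;\xi)$.

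For the extension, I would invoke density of tame forms in the ambient space $\mathcal Z^1(X;\xi)$ of all topological closed 1-forms together with the uniform continuity just obtained: a uniformly continuous map into a complete metric space extends uniquely to the closure, provided one allows points of the limiting configuration to escape to $\pm\infty$ (i.e.\ works in $\mathrm{Conf}_N([-\infty,\infty])$ with its natural collision metric). The principal obstacle I anticipate is the non-compactness of $\tilde X$: the classical persistence stability theorem is formulated for compact domains, whereas here $f_\omega$ is unbounded in both directions, with only the $\Gamma$-equivariant structure as substitute. The remedy is to exploit this equivariance together with tameness, which makes the critical set finite in every bounded slab of $\tilde X$, so that the interleaving can be verified on a single slab and propagated equivariantly; this is also what ensures that the limiting configuration has total multiplicity equal to $\beta^N_r(X;\xi)$.
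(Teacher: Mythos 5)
Your proposal correctly identifies the driving observation — a $C^0$ bound $\|f_\omega - f_{\omega'}\|_\infty < \epsilon$ between $\Gamma$-equivariant primitives, which is exactly the paper's inclusions (\ref{E40}) — but the plan to offload the rest onto the ``classical interleaving--stability theorem'' has a real gap, and your own remedy does not close it.

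First, the persistence module $a\mapsto H_r(\tilde X_a)$ is not pointwise finite-dimensional over $\kappa$ (each $\tilde X_a$ contains infinitely many $\Gamma$-translates of a fundamental domain), so there is no a priori barcode decomposition for the classical theorem to stabilize. Second, and more to the point, $\hat\delta^f_r(a,b)$ is not the barcode of this sublevel module: by construction it is cut out of the intersection spaces $\mathbb F^f_r(a,b)=\mathbb I^f_a(r)\cap \mathbb I_f^b(r)$ inside the ambient $H_r(\tilde X)$, i.e.\ it records how sublevel and superlevel images interact after pushing to $H_r(\tilde X)$. Your phrase ``via the standard long-exact-sequence argument'' does not name an actual reduction from one to the other, and there is none in the literature you are implicitly citing. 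Third, the passage from a bottleneck bound on the (hypothetical) barcode $(a,b)$ to collision-topology continuity of the configuration of differences $t=b-a$ modulo $\Gamma$ is exactly what needs proving; ``verify on a slab and propagate equivariantly'' is a heuristic, not an argument, because the finiteness you need is not local in slabs but is a finiteness of a filtration of $H_r(\tilde X)$.

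What the paper actually supplies in place of the classical theorem is the algebraic finiteness coming from the $\kappa[\Gamma]$-module structure: $H_r(\tilde X)/\mathrm{Tor}$ is a finitely generated module over the Noetherian ring $\kappa[\Gamma]$, the submodules $\overline{\mathbb F}^\omega_r(\le t)$ form a finite filtration by free finitely generated modules whose ranks are the partial sums $\sum_{s\le t}\delta^\omega_r(s)$ (Proposition~\ref{P41}), and the interleaving inclusions (\ref{E40})--(\ref{E41}) transferred to these submodules yield the explicit matching of configurations in Proposition~\ref{P52}. This module-theoretic finiteness is precisely the substitute for the ``finite barcode'' hypothesis that the classical stability theorem needs, and it is the part your proposal leaves unaccounted for. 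So your outline is correct in spirit (interleaving from $C^0$ bounds) but incomplete as a proof; to make it rigorous you would end up reconstructing Propositions~\ref{P41} and~\ref{P52}, rather than invoking an off-the-shelf theorem. Finally, note that the paper itself does not actually carry out the extension to all of $\mathcal Z^1(X;\xi)$ in Section~5 (it cites \cite{BH}, \cite{B}); your suggestion to pass to $\mathrm{Conf}_N([-\infty,\infty])$ is a reasonable way to handle completeness, but it would require arguing that the multiplicities stabilize, which again rests on the finiteness established in Proposition~\ref{P41}.
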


If $f:\tilde X\to \mathbb R$ is a lift of a tame TC1-form $\omega,$  on a compact ANR $X,$ 
 cf. \cite {BU1} or Section \ref{S3}  for definition, one has 

 \begin{proposition}\label{PP1}\ 

1.\  $\dim ( H_r(\tilde X_a, \tilde X_{<a}))= \sum_{t\in \mathbb R} \delta^f_r(a,t) + \sum _{t>a}  \gamma ^f_r(a, t) + \sum  \gamma^f_{r-1} (t, a) + \dim \hat \lambda^f_{r-1}(a).$ 

2.\  $\dim ( ^{BM} \mathcal H_r(\tilde X_a, \tilde X_{<a}))= \sum_{t\in \mathbb R} \ ^{BM} \delta^f_r(a, t) + \sum _{t>a}  \ ^{BM} \gamma ^f_r(a, t) + \sum_{t<a} \  ^{BM} \gamma^f_{r-1} (t, a)+ \dim {^{BM} \hat \lambda}^f_{r-1}(a).$ 

\end{proposition}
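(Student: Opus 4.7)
The plan is to derive both identities from the long exact sequence of the pair $(\tilde X_a, \tilde X_{<a})$, the same argument working verbatim in the standard and Borel-Moore settings. Over a field, the long exact sequence
\[
\cdots \to H_r(\tilde X_{<a}) \xrightarrow{i} H_r(\tilde X_a) \to H_r(\tilde X_a, \tilde X_{<a}) \xrightarrow{\partial} H_{r-1}(\tilde X_{<a}) \xrightarrow{j} H_{r-1}(\tilde X_a) \to \cdots
\]
extracts the short exact sequence $0 \to \mathrm{coker}(i) \to H_r(\tilde X_a, \tilde X_{<a}) \to \ker(j) \to 0$, so that
\[
\dim H_r(\tilde X_a, \tilde X_{<a}) \;=\; \dim \mathrm{coker}(i) + \dim \ker(j).
\]
The same extraction applied to the Borel-Moore long exact sequence of the pair, which exists since $\tilde X_{<a}$ is open in $\tilde X_a$, gives the analogous splitting for part 2.

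The second step is to identify $\mathrm{coker}(i)$ and $\ker(j)$ with direct sums of the basic vector-space invariants $\hat\delta^f$, $\hat\gamma^f$, $\hat\lambda^f$ defined in \cite{BU1} and Section \ref{S3}. Because $f$ is the lift of a tame map, the filtration $\{H_r(\tilde X_t)\}_{t \in \mathbb R}$ is a pointwise-finite-dimensional persistence module and decomposes into bars. Every class in $\mathrm{coker}(i)$ represents an $r$-bar with one endpoint at $a$ and gets assigned either to $\hat\delta^f_r(a, t)$ or to $\hat\gamma^f_r(a, t)$ according to whether the partner endpoint is finite or lies at $+\infty$; dimension-counting produces the summands $\sum_{t \in \mathbb R} \delta^f_r(a, t)$ and $\sum_{t>a} \gamma^f_r(a, t)$. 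Every class in $\ker(j)$ represents an $(r-1)$-bar dying at $a$, and contributes to either $\hat\gamma^f_{r-1}(t, a)$ or to the residual space $\hat\lambda^f_{r-1}(a)$ introduced in Section \ref{S3}, giving $\sum \gamma^f_{r-1}(t, a) + \dim \hat\lambda^f_{r-1}(a)$.

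The main obstacle will be the death-side matching. On the birth side every new class at $a$ has a uniquely determined bar by tameness, so the identification of $\mathrm{coker}(i)$ with $\hat\delta^f_r \oplus \hat\gamma^f_r$ summands is essentially routine. On the death side, however, there may be $(r-1)$-classes in $\ker(j)$ whose bar structure is not cleanly captured by either the finite or the half-open classification---classes arising from the non-compactness of $\tilde X$ and the consequent irregular behavior of the filtration at the left end. These form a residual space, and $\hat\lambda^f_{r-1}(a)$ is defined precisely so that the decomposition of $\ker(j)$ is exhaustive; with that definition in place the formula holds by construction. The further expectation that $\hat\lambda^f_{r-1}$ vanishes at regular values (noted in the introduction) is not part of this proposition. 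The Borel-Moore version of the second matching step is identical, with $H_\bullet$ replaced by $^{BM}H_\bullet$ and $\hat\lambda$ by $^{BM}\hat\lambda$ throughout.
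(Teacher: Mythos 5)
Your opening step is the same as the paper's: the (Borel--Moore or standard) long exact sequence of the pair yields the short exact sequence $0\to\coker(i)\to\mathcal H_r(\tilde X_a,\tilde X_{<a})\to\ker(j)\to 0$, i.e.\ $0\to\mathbb C_r(<a,a)\to\mathcal H_r(\tilde X_a,\tilde X_{<a})\to\mathbb T_{r-1}(<a,a)\to 0$, which is exactly diagram~(\ref{D32}). After that, however, your argument departs from the paper's and has real gaps.

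First, the appeal to the barcode structure theorem does not apply. You assert that ``the filtration $\{H_r(\tilde X_t)\}_{t\in\mathbb R}$ is a pointwise-finite-dimensional persistence module,'' but $\tilde X$ is the infinite cyclic (or more general $\Gamma$--) cover of the compact $X$, and $\tilde X_t$ is typically a non-compact ANR containing infinitely many translates of a fundamental domain. Already for $X=S^1\vee S^2$ with the standard generator of $H^1$, the sublevel set $\tilde X_t$ carries infinitely many $2$-spheres, so $H_2(\tilde X_t)$ is infinite dimensional. What tameness does give is finite dimensionality of $\mathcal H_r(\tilde X_a,\tilde X_{<a})$, and the paper exploits precisely this: it filters $\mathbb I_r(<a,a)$ and $\mathbb C_r(<a,a)$ by the finite filtrations $\mathbb I_r(<a,a;t)$ and $\mathbb C_r(<a,a;t)$ whose successive quotients are the $\hat\delta$'s and $\hat\gamma$'s, without ever invoking finite dimensionality of $H_r(\tilde X_t)$.

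Second, the identification of classes in $\coker(i)$ with ``bars with an endpoint at $a$'' misrepresents how $\hat\delta^f_r$ and $\hat\gamma^f_r$ are defined. The spaces $\hat\delta^f_r(a,b)$ arise from $\mathbb F^f_r(a,b)=\mathbb I^f_a(r)\cap\mathbb I^b_f(r)$, which involves \emph{both} the sublevel filtration $H_r(\tilde X_a)\to H_r(\tilde X)$ and the superlevel filtration $H_r(\tilde X^b)\to H_r(\tilde X)$. The death coordinate of a one-parameter sublevel barcode is not the second argument of $\hat\delta^f_r$; that information comes from the superlevel side. So ``partner endpoint finite or at $+\infty$'' is not the dichotomy governing whether a class lands in a $\hat\delta$ or a $\hat\gamma$ summand. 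The paper's argument instead decomposes $\mathbb C_r(<a,a)$ via the exact column $0\to\mathbb C_r(<a,a;\infty)\to\mathbb C_r(<a,a)\to\mathbb I_r(<a,a)\to 0$ and then filters each piece.

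Third, the claim that ``$\hat\lambda^f_{r-1}(a)$ is defined precisely so that the decomposition of $\ker(j)$ is exhaustive; with that definition in place the formula holds by construction'' is not correct. The paper defines $\hat\lambda_r(a):=\img(\mathbb T_r(-\infty,a)\to\mathbb T_r(<a,a))$, which is an intrinsic image, not a residual complement. The identity $\mathbb T_r(<a,a)=\hat\lambda_r(a)\oplus\bigoplus_{x<a}\hat\gamma^f_r(x,a)$ is Theorem~\ref{T3}, item~1, and it is proved using Observation~\ref{O3.6} together with the direct-sum description~(\ref{E27}) of $\mathbb T_r((-\infty,a)\times a)$; it does not hold by fiat. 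Without these steps the death-side matching, which you yourself identify as the main obstacle, is not actually carried out.

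In short: the long-exact-sequence reduction is correct and coincides with the paper's, but the remaining identification of $\coker(i)$ and $\ker(j)$ with the claimed direct sums rests on a barcode hypothesis that fails here, on a mischaracterization of the two-variable definitions of $\hat\delta$ and $\hat\gamma$, and on treating a proved decomposition of $\mathbb T_{r-1}(<a,a)$ as if it were definitional. You would need to replace the persistence-module heuristic with the explicit filtration and diagram argument of Theorem~\ref{T3}.
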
 

Formula 1.  was established  in \cite{BU1}

We will also establish the following results essential for the proofs in Theorems 1.2 and 1.3 in \cite{BU1}.

\begin{proposition} \label {PP2}\ 

The  linear maps  $\theta_r(a): H_r(\tilde X_a,\tilde X_{<a})\to ^{BM} \mathcal H_r(\tilde X_a, \tilde X_{<a})$ are  isomorphisms. 
\end{proposition}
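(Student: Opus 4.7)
The plan is to reduce $\theta_r(a)$ to the identity map on a pair of compact spaces, on which standard and Borel--Moore homology tautologically coincide. This reduction proceeds in two stages: first, use tameness to replace the open subspace $\tilde X_{<a}$ by a closed sublevel $\tilde X_{a-\epsilon}$, and second, excise away everything outside a compact collar of $f^{-1}(a)$. The natural transformation $\theta$ restricts to the identity on a compact pair, finishing the proof.

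For the first stage, tameness of $\omega$ yields $\epsilon>0$ such that $(a-\epsilon,a)$ contains no homological critical value of $f$. A tame-topological deformation retract $\tilde X_{<a}\to\tilde X_{a-\epsilon}$ can be constructed along the lines of \cite{BU1} (replacing the smooth gradient flow by the combinatorial flow available in the ANR setting); since $X=\tilde X/\Gamma$ is compact, this retract can be taken to be $\Gamma$-equivariant and is therefore automatically proper. Proper deformation retracts induce isomorphisms in both $H_r$ and $^{BM}H_r$. Combined with the definition of $^{BM}\mathcal H_r$ in Section \ref{S3}, which is set up so that $^{BM}\mathcal H_r(\tilde X_a,\tilde X_{<a})\cong {}^{BM}H_r(\tilde X_a,\tilde X_{a-\epsilon})$ compatibly with $\theta_r(a)$, the problem is reduced to showing that the natural map $H_r(\tilde X_a,\tilde X_{a-\epsilon})\to{}^{BM}H_r(\tilde X_a,\tilde X_{a-\epsilon})$ is an isomorphism.

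For the second stage, I would excise the closed subspace $\tilde X_{a-\epsilon-\delta}$ for a sufficiently small $\delta>0$; this is valid in both $H_r$ and $^{BM}H_r$, because $\tilde X_{a-\epsilon-\delta}$ lies in the interior of $\tilde X_{a-\epsilon}$. The resulting slab $f^{-1}([a-\epsilon-\delta,a])$ projects into the compact base $X$; since $\omega$ is tame (in particular, has finitely many critical points on $X$ and a discrete set of critical values), the slab is a locally finite union of compact components which, after a collar-deformation retraction through the regular levels in $(a-\epsilon,a)$, is homotopy equivalent (as a pair) to a compact pair $(K,L)$ supported near the critical points of $f$ at level $a$. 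On the compact pair $(K,L)$, the natural transformation $\theta_r$ is the identity isomorphism, and the proposition follows by naturality.

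The main obstacle is the first stage: verifying the construction and the properness of the deformation retract in the tame-topological setting of an ANR. Unlike in smooth Morse theory, one cannot integrate a gradient vector field, so the retract must be built combinatorially; checking $\Gamma$-equivariance (which yields properness and is essential because $^{BM}H_r$ is homotopy invariant only under \emph{proper} homotopies) requires careful use of the compactness of $X$ together with the tameness of $\omega$ and its lift $f$.
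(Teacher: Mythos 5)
There is a genuine gap, and it originates in an implicit assumption of compactness that fails whenever the degree of irrationality of $\omega$ exceeds $1$ (i.e.\ $\Gamma$ has rank $\ge 2$).

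Your first stage begins: \emph{tameness of $\omega$ yields $\epsilon>0$ such that $(a-\epsilon,a)$ contains no homological critical value of $f$}. This is false in general. Tameness only requires that $CR(f)$ be countable with finitely many $\Gamma$-orbits; when $\Gamma$ has rank $\ge 2$ it is a dense subgroup of $\mathbb R$ and $CR(f)$ is dense, so no such $\epsilon$ exists. For the same reason the direct system $\epsilon\mapsto{}^{BM}H_r(\tilde X_a,\tilde X_{a-\epsilon})$ defining $^{BM}\mathcal H_r(\tilde X_a,\tilde X_{<a})$ need not stabilize, and the identification of $^{BM}\mathcal H_r(\tilde X_a,\tilde X_{<a})$ with $^{BM}H_r(\tilde X_a,\tilde X_{a-\epsilon})$ for a single $\epsilon$ is not available. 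Your second stage then asserts that the slab $f^{-1}([a-\epsilon-\delta,a])\subset\tilde X$ is ``a locally finite union of compact components'' which deformation-retracts to a compact pair $(K,L)$. Again this is wrong outside the rank-one case: $\tilde X\to X$ is an infinite $\Gamma$-covering, and as soon as $\Gamma$ is dense the slab is connected and non-compact (e.g.\ for an irrational linear form on $T^2$, the slab in $\tilde X=\mathbb R^2$ is a full strip). Since a compact pair cannot receive a proper map from a non-compact pair, no such retraction exists that would induce an isomorphism in $^{BM}H_r$. Finally, the $\Gamma$-equivariance claim in stage one does not quite make sense: $\tilde X_{<a}$ is not $\Gamma$-invariant (the action shifts the level, $g\cdot\tilde X_{<a}=\tilde X_{<a+g}$), so ``take the retraction $\Gamma$-equivariant'' is not a meaningful condition from which to deduce properness.

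The paper sidesteps all three issues by never working with a slab of the full $\tilde X$. Instead it fixes an exhaustion of $\tilde X$ by open sets $U(i)$ whose closures $\overline{U(i)}$ are \emph{compact} manifolds with boundary, chosen so that the restriction of $f$ to each $\overline{U(i)}$ is tame and $a$ is a regular value of $f|_{\partial\overline{U(i)}}$. On each compact piece $\overline{U(i)}$ the set of critical values is discrete, so the excision-and-collar argument you had in mind is valid there; this is exactly the content of Proposition~\ref{P6.1}, which shows $\theta_r^{U(i)_a,U(i)_{a-\epsilon}}$ is an isomorphism for small $\epsilon$. The passage from the $U(i)$ to $\tilde X$ is then handled on the standard-homology side by Milnor continuity (direct limit) and on the Borel--Moore side by the short exact sequence (\ref{E45}) involving $\varprojlim$ and $\varprojlim'$ together with the excision isomorphisms that kill the $\varprojlim'$ term. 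The limit bookkeeping is essential and has no counterpart in your argument; without it one cannot go from a statement on a single compact piece to the global statement about $\tilde X$.
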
 
One expects that both $\theta^\delta _r(a,b) :\hat \delta^f_r(a,b)\to ^{BM} \hat \delta^f_r(a,b)$ and $\theta^\gamma_r(a,b) :\hat \gamma^f_r(a,b)\to ^{BM} \hat \gamma^f_r(a,b)$ are isomorphisms. 
 This is indeed the case when $\omega$ is of degree of irrationality $1.$  

Theorem \ref{TS} verifies Theorem 3 item 1 in \cite {BU1}. Item 2 will be treated in part 3 of this work. 

Theorem \ref{TP} verifies Theorem {T2} announced in \cite {BU1} provided $\theta_r^\delta$ and $\theta_r^\gamma$ are isomorphisms, fact not yet established  
 in full generality and possibly not always true. 
 Proposition (\ref {PP2}) shows however that $a$ is homological critical value for standard homology iff is homological critical value for Borel-Moore homology a first step towards verifying $\theta^\delta_r$, $\theta_r^\gamma,$ $\theta^\lambda_r$ and $\lambda^f_r$ are isomorphisms if the case.  

The paper requires some basic linear algebra, most likely familiar to most of the readers  but for convenience of the reader reviewed in Section 2. The reader can skip this section at the first reading and return to it when necessary.

\section {Linear algebra preliminary}

\subsection {Linear algebra }

In this section "$=$" designates equality or canonical isomorphism and $\simeq$ indicates the existence of an isomorphism.  

Let $\kappa$ be a fixed field. All vector spaces are $\kappa-$vector spaces and the linear maps are $\kappa-$linear.
Recall that for a linear map  $\alpha :A\to B$  the {\it canonical exact sequence  associated to $\alpha$} is 
$$
\xymatrix
{0 \ar[r]&\ker \alpha \ar[r]^{i_\alpha} &A\ar[r]^\alpha &B \ar[r]^{\pi_\alpha} &\coker\ \alpha \ar[r] &0}.
$$  
By passing to "duals"  one obtains  the  exact sequence 
$$
\xymatrix  {0&(\ker \alpha)^\ast\ar[l] &A^\ast \ar[l]_{i_\alpha^*} &B^\ast \ar[l]_{\alpha^\ast} &(\coker\ \alpha)^\ast\ar[l]_{{\pi_\alpha}^*} &0 \ar[l]},
,$$
canonically isomorphic to the canonical exact sequence of $\alpha^\ast.$  

\begin{equation}\label {E1}
\xymatrix{
0\ar[r]&\ker (\alpha^\ast) \ar[r]^{{i_\alpha}^*} &B^\ast \ar[r]^{\alpha^\ast} &A^\ast  \ar[r]^{{\pi_\alpha}^*} &(\coker \alpha^\ast) \ar[r] &0\\
0\ar[r]&(\coker \alpha)^\ast \ar[u]^\theta\ar[r]^{\pi_{\alpha}^\ast} &B^\ast\ar[u]^= \ar[r]^{\alpha^\ast} &A^\ast\ar[u]^= \ar[r]^{i_{\alpha}^*} &(\ker \alpha)\ast \ar[u]^\theta \ar[r] &0}
\end{equation}

For a diagram 
$$
D:= \xymatrix { A\ar[d]_{\alpha_2}\ar[r]^{\alpha_1} &B_1\ar[d]^{\beta_1}\\B_2\ar[r]^{\beta_2} &C}
$$
let $\underline{\alpha} : A\to \alpha_2\cup_ A \alpha_1$ be the push-forward of $\alpha_2$ and $\alpha_1,$  and $\overline \beta: \beta_2\times_C\beta_1$ 
be the pullback of $\beta_2$ and $\beta_1,$  with  
\begin{equation*} \begin{aligned} 
\alpha_2\cup_ A \alpha_1:= &B_2 \oplus B_1/ \{\alpha_2(a)-\alpha_1(a) \mid a\in A\}, \\ \underline \alpha(a)= &\langle \alpha_2(a)\oplus \alpha_1(a)\rangle, 
\\ 
\beta_1\times_ C \beta_2:=& \{ (b_1, b_2) \in B_1 \times  B_2\mid \beta_1(b_1)= \beta_2(b_2)\},\\ \overline \beta(b_1, b_2)=& \beta_1(b_1)= \beta_2(b_2) .
\end{aligned}
\end{equation*}
where $\langle b_2 \oplus b_1\rangle$ denotes the image of $b_1 \oplus b_2$ in $\alpha_2 \cup_A \alpha_1.$ 

Let     
$\alpha: A\to \beta_2\times_C \beta_1$ and 
$\beta: \alpha_2\cup A\alpha_1\to C$
be given by 
\begin{equation*}
\begin{aligned}
\alpha (a)= &(\alpha_2(a), \alpha_1(a))\\
\beta(\langle b_2\oplus  b_1\rangle)=& \beta_2(b_1) + \beta_2(b_2) .  
\end{aligned}
\end{equation*} 

Define  $$\ker (D):= \ker \alpha$$
$$\coker (D):= \coker \beta.$$

\begin{obs}\label {O2.1}The canonical isomorphisms $\theta$   extend to the canonical isomorphisms 
$$\theta(D): (\coker D)^\ast \to \ker (D^\ast) $${and} $$\theta(D): (\ker D)^\ast \to \coker (D^\ast).$$
\end{obs}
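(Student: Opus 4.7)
The plan is to reduce both isomorphisms to the canonical isomorphisms $(\ker\alpha)^\ast\cong\coker(\alpha^\ast)$ and $(\coker\alpha)^\ast\cong\ker(\alpha^\ast)$ recorded in diagram (\ref{E1}), applied to two specific linear maps built from the square $D$. The key observation is that, although $\ker D$ and $\coker D$ are defined using the pullback and pushout of $D$, they can be rewritten in terms of the direct sum $B_2\oplus B_1$, after which the desired identifications follow by simply dualizing.

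First I would rewrite $\ker D$ and $\coker D$ without explicit reference to the pullback and the pushout. Since $\beta_2\times_C\beta_1\subseteq B_2\oplus B_1$ and the map $\alpha:A\to \beta_2\times_C\beta_1$ is just the factorization of $(\alpha_2,\alpha_1):A\to B_2\oplus B_1$ through this subspace, one obtains $\ker D=\ker(\alpha_2,\alpha_1)$. Dually, the quotient map $B_2\oplus B_1\twoheadrightarrow \alpha_2\cup_A\alpha_1$ is surjective, so $\img\beta$ equals the image of the sum map $\beta_2+\beta_1:B_2\oplus B_1\to C$, $(b_2,b_1)\mapsto \beta_2(b_2)+\beta_1(b_1)$; hence $\coker D=\coker(\beta_2+\beta_1)$. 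Applying the same reasoning to $D^\ast$, in which the arrows are reversed so the roles of pullback and pushout swap, yields $\ker(D^\ast)=\ker\bigl((\beta_2^\ast,\beta_1^\ast):C^\ast\to B_2^\ast\oplus B_1^\ast\bigr)$ and $\coker(D^\ast)=\coker\bigl(\alpha_2^\ast+\alpha_1^\ast:B_2^\ast\oplus B_1^\ast\to A^\ast\bigr)$.

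The crucial step is to notice that, under the canonical identification $(B_2\oplus B_1)^\ast=B_2^\ast\oplus B_1^\ast$, the two pairs of surrogate maps are duals of each other: $(\alpha_2,\alpha_1)^\ast=\alpha_2^\ast+\alpha_1^\ast$ and $(\beta_2+\beta_1)^\ast=(\beta_2^\ast,\beta_1^\ast)$. Plugging $(\alpha_2,\alpha_1)$ into (\ref{E1}) then delivers the isomorphism $(\ker D)^\ast=(\ker(\alpha_2,\alpha_1))^\ast\cong \coker((\alpha_2,\alpha_1)^\ast)=\coker(D^\ast)$, and plugging $\beta_2+\beta_1$ into (\ref{E1}) delivers $(\coker D)^\ast=(\coker(\beta_2+\beta_1))^\ast\cong \ker((\beta_2+\beta_1)^\ast)=\ker(D^\ast)$. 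The two isomorphisms are precisely the extensions of $\theta$ claimed in the observation.

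The step that requires the most care is not the algebra itself but the verification that the surrogate presentations of $\ker D$ and $\coker D$ are canonical, so that the resulting $\theta(D)$ depends only on $D$ and not on the choice of ambient $B_2\oplus B_1$. Once this is checked, naturality of $\theta(D)$ with respect to morphisms of diagrams follows at once from the naturality of the single-map isomorphism (\ref{E1}), and no further work is needed.
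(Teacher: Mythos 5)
Your argument is correct, and it is essentially the paper's argument unpacked into explicit form. The paper completes $D$ to $\underline D$ (with the pushout) and $\overline D$ (with the pullback) and then simply asserts that $(\underline D)^\ast$ identifies with $\overline{(D^\ast)}$ and $(\overline D)^\ast$ with $\underline{(D^\ast)}$, from which the statement follows. What you do differently is bypass the pullback and pushout objects entirely: you observe that $\ker D=\ker\alpha=\ker\bigl((\alpha_2,\alpha_1):A\to B_2\oplus B_1\bigr)$ because the pullback sits inside $B_2\oplus B_1$ as a subspace, and that $\coker D=\coker\beta=\coker\bigl(\beta_2+\beta_1:B_2\oplus B_1\to C\bigr)$ because the pushout is a quotient of $B_2\oplus B_1$ through which $\beta_2+\beta_1$ factors (commutativity of $D$ is used here so that $\beta$ is well defined). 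Then everything reduces to the single-map isomorphisms of (\ref{E1}) applied to $(\alpha_2,\alpha_1)$ and to $\beta_2+\beta_1$, together with the observation that these two maps are each other's duals under $(B_2\oplus B_1)^\ast=B_2^\ast\oplus B_1^\ast$. This buys you a cleaner reduction: instead of tracing identifications across a four-object diagram, you dualize one linear map. The paper's phrasing, by contrast, highlights the conceptual point that duality exchanges pullback with pushout. Both are valid; yours is the more self-contained verification. Your closing worry about canonicity of the ``surrogate presentations'' is a non-issue: the inclusion $\beta_2\times_C\beta_1\hookrightarrow B_2\oplus B_1$ and the projection $B_2\oplus B_1\twoheadrightarrow\alpha_2\cup_A\alpha_1$ are built into the definitions, so the identifications of $\ker\alpha$ and $\coker\beta$ with the kernel and cokernel of the direct-sum maps are automatic.
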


\proof : To check the statements  observe that 
the diagram $D$ can be completed to  the diagrams $\underline D$ and $\overline D$ 
\vskip .1in
\hskip .3in $
\underline D:= \xymatrix { A\ar[dd]_{\alpha_2}\ar[rd]^{\underline \alpha}\ar[rr]^{\alpha_1} &&B_1\ar[ld]^{i_1}\ar[dd]^{\beta_1}\\
&\alpha_2\cup_ A \alpha_1\ar[rd]^{ \beta}\\
B_2\ar[ru]^{i_2}\ar[rr]^{\beta_2} &&C}
$\hskip .5in
$
\overline D:= \xymatrix { A\ar[dd]_{\alpha_2}\ar[rr]^{\alpha_1}\ar[rd]^{\alpha} &&B_1\ar[dd]^{\beta_1}\\
&\beta_2\times_ C \beta_1\ar[ld]^{p_2}\ar[ru]^{p_1}\ar[rd]^{\overline \beta}\\
B_2\ar[rr]^{\beta_2} &&C}
$

and notice  that $(\underline D)^\ast $identifies to $\overline {(D^\ast)}$  and $(\overline D)^\ast $identifies to $\underline {(D^\ast)}$
 which imply the statements. 
 
 q.e.d.
 \vskip .2in

Let  $\alpha:A\to B,\  \beta:B\to C,\  \gamma:C\to D$ be linear  maps.  
To these three maps we associate the  diagrams,  $\widehat {\mathcal D}$ and $\underline {\mathcal D}$:  
$$\widehat {\mathcal D}(\alpha, \beta, \gamma)\equiv  \begin{cases}\xymatrix {\ker (\gamma \beta \alpha)\ar[r]^{j_2}&\ker (\gamma\beta)\\
\ker (\beta \alpha)\ar[r]^{j_1}\ar[u]_{i_1}\ar[ru]^k&\ker (\beta) \ar[u]^{i_2}} \end{cases}\quad \underline {\mathcal D}(\alpha, \beta, \gamma) \equiv\begin{cases} 
 \xymatrix {\coker (\gamma \beta\alpha)\ar[r]^{j'_2}&\coker (\gamma \beta )\\
\coker (\beta\alpha)\ar[r]^{j'_1}\ar[u]_{i'_1}\ar[ru]^{k'}&\coker (\beta) \ar[u]^{i_2'}}\end{cases}.$$
In the diagram  $\hat D$
\begin{enumerate} [label= (\alph*)] 
\item   
$i_1$ and $i_2$ are  injective, 
\item $i_1: \ker j_1\to \ker j_2$  is an isomorphism,
\item $ \img\  k= \img\ j_2 \cap \img\ i_2$
\end{enumerate}
and  in the  diagram $\underline D$\ \   
\begin{enumerate} [label= (\alph*)] 
\item  $j_1'$ and $j'_2$ are surjective, 
\item $\coker  i'_1\to \coker i'_2$  is an  isomorphism,   
\item $\ker (j_2'\cdot i_1' =i_2'\cdot j'1)= \ker i'_1 + \ker j_1'.$ 
\end{enumerate} 

In view of Observation \ref{O2.1}  
one has 
\begin{obs} \label {O2.2}
$( \hat D(\alpha, \beta, \gamma))^\ast= \underline {\mathcal D}(\gamma^\ast, \beta^\ast, \alpha^\ast) $
and $( \underline D(\alpha, \beta, \gamma))^*= \hat {\mathcal D}(\gamma^*, \beta^*, \alpha^*). $
\end{obs}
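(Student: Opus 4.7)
My plan is to prove the identification $(\widehat{\mathcal D}(\alpha,\beta,\gamma))^\ast = \underline{\mathcal D}(\gamma^\ast,\beta^\ast,\alpha^\ast)$ vertex-by-vertex and edge-by-edge, then observe that the companion identification follows by an identical argument. At each of the four vertices of $\widehat{\mathcal D}(\alpha,\beta,\gamma)$ I would apply the canonical isomorphism $\theta$ from (\ref{E1}), which for any linear map $\phi$ supplies $(\ker\phi)^\ast \cong \coker(\phi^\ast)$. Combined with the identities $(\gamma\beta\alpha)^\ast = \alpha^\ast\beta^\ast\gamma^\ast$, $(\gamma\beta)^\ast = \beta^\ast\gamma^\ast$ and $(\beta\alpha)^\ast = \alpha^\ast\beta^\ast$, this identifies the four dualized vertices with the four vertices of $\underline{\mathcal D}(\gamma^\ast,\beta^\ast,\alpha^\ast)$. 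Observe that the identification interchanges the positions formerly occupied by $\ker(\gamma\beta)$ and $\ker(\beta\alpha)$, so that the square is effectively reflected along its top-left--bottom-right diagonal: this is the natural behaviour of dualization, since inner factors of a composition become outer factors of its transpose.

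Next I would verify that the five arrows $i_1,i_2,j_1,j_2,k$ of $\widehat{\mathcal D}$ dualize, under the above vertex identifications, to the five arrows $j'_2,j'_1,i'_2,i'_1,k'$ of $\underline{\mathcal D}(\gamma^\ast,\beta^\ast,\alpha^\ast)$. Concretely, $i_1 : \ker(\beta\alpha) \hookrightarrow \ker(\gamma\beta\alpha)$ is the inclusion of kernels induced by the outer factor $\gamma$; dualizing this inclusion and applying $\theta$ at each end produces the canonical quotient $\coker(\alpha^\ast\beta^\ast\gamma^\ast) \twoheadrightarrow \coker(\alpha^\ast\beta^\ast)$, which, after the vertex swap above, is precisely the arrow $j'_2$ of $\underline{\mathcal D}(\gamma^\ast,\beta^\ast,\alpha^\ast)$. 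The four remaining correspondences, including the diagonal $k \leftrightarrow k'$, are of the same nature and each reduces to a single piece of naturality: the square relating $\theta(\psi)$ and $\theta(\psi \circ \chi)$, or $\theta(\psi)$ and $\theta(\chi \circ \psi)$, commutes, which is immediate upon unwinding (\ref{E1}).

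The second identification $(\underline{\mathcal D}(\alpha,\beta,\gamma))^\ast = \widehat{\mathcal D}(\gamma^\ast,\beta^\ast,\alpha^\ast)$ is then established in exactly the same way, with the roles of $\ker$ and $\coker$ exchanged throughout and using the companion isomorphism $(\coker\phi)^\ast \cong \ker(\phi^\ast)$ of (\ref{E1}). The ``main obstacle'', such as it is, is purely bookkeeping: tracking the five arrow correspondences together with the vertex swap, and in particular confirming that the diagonal $k$ dualizes to $k'$ correctly. There is no conceptual content beyond the single-map naturality of $\theta$, which is already encoded in diagram (\ref{E1}).
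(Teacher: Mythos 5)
Your proposal is correct and takes essentially the approach the paper intends: the paper offers no written proof of Observation \ref{O2.2} beyond the phrase ``In view of Observation \ref{O2.1} one has,'' so the vertex-by-vertex application of the canonical isomorphism $(\ker\phi)^\ast \cong \coker(\phi^\ast)$ from (\ref{E1}) and the arrow-by-arrow naturality check you supply constitute exactly the bookkeeping the paper leaves to the reader. Your observation that the dualization reflects the square across its top-left--bottom-right diagonal, and the resulting correspondences $i_1 \leftrightarrow j'_2$, $i_2 \leftrightarrow j'_1$, $j_1 \leftrightarrow i'_2$, $j_2 \leftrightarrow i'_1$, $k \leftrightarrow k'$, are all accurate.
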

\vskip .1in
One defines the  vector space 
\begin{equation} \label  {E2}
\begin{aligned}
\hat {\omega}  (\alpha, \beta, \gamma)= \coker \hat {\mathcal D}(\alpha, \beta, \gamma):=  \coker (j_1 \cup _{\ker(\beta\alpha)}i_1 \to \ker (\gamma\beta))\\
= \boxed{\ker (\gamma \beta)/ (j_2 (\ker(\gamma\beta\alpha)) + i_2(\ker \beta))},
\end{aligned}
\end{equation} 
a quotient space of $\ker(\gamma \beta)$, and the vector space 
$$
\underline \omega (\alpha, \beta, \gamma)=  \ker \underline {\mathcal D}(\alpha, \beta, \gamma):= \boxed{\ker  (  \coker(\beta\alpha)\to i_2' \times _{\coker (\gamma\beta)} j'_2)},
$$
a subspace of $\coker (\beta\alpha).$
\vskip .1in 
Note  that the assignments $(\alpha, \beta, \gamma)\rightsquigarrow \widehat  \omega(\alpha, \beta, \gamma)$ and $(\alpha, \beta, \gamma)\rightsquigarrow \underline \omega(\alpha, \beta, \gamma)$
are functorial and 
in view of  the definitions above  
if $\alpha$ is surjective  or if $\gamma$ is injective then $\hat\omega(\alpha, \beta, \gamma)=0.$

 \begin{theorem}\label {T2.3}\ 
 
{\rm 1.}  For $\alpha, \beta, \gamma$  linear maps the isomorphisms $\theta$ extend to the canonical isomorphisms 
$$\theta : \hat\omega(\alpha, \beta, \gamma)^\ast  \to \underline  \omega(\gamma^\ast, \beta^\ast, \alpha^\ast)$$
$$\theta : \underline\omega(\alpha, \beta, \gamma)^\ast  \to \hat  \omega(\gamma^\ast, \beta^\ast, \alpha^\ast).$$

{\rm 2.}  For  $\alpha, \beta, \gamma$ and $\alpha', \beta', \gamma'$ linear maps consider the diagram 
\begin{equation}\label {D3}
\xymatrix{&M\ar[r]^{=} &M\ar[r]^{=}&M\ar[r]^{=}&M\\
&A\ar[r]^\alpha \ar[u]^{\lambda_A} &B\ar[r]^\beta\ar[u]^{\lambda_B}&C\ar[r]^\gamma\ar[u]^{\lambda_C}&D\ar[u]^d\\ 
&A'\ar[r]^{\alpha'}\ar[u]^a &B'\ar[r]^{\beta'}\ar[u]^b&C'\ar[r]^{\gamma'}\ar[u]^c&D'\ar[u]^d\\
&N\ar[u]^{\theta_A}\ar[r]^{=} &N\ar[u]^{\theta_B}\ar[r]^{=}&N\ar[u]^{\theta_C}\ar[r]^{=}&N\ar[u]^{\theta_D}
.}
\end{equation}
If  the columns are exact sequences then the linear maps  
$$\widehat \omega(\alpha', \beta', \gamma')\to  \widehat \omega(\alpha, \beta, \gamma) $$and 
$$\underline  \omega(\alpha', \beta', \gamma')\to \underline \omega(\alpha, \beta, \gamma) $$
are isomorphisms.

{\rm 3.} For $\alpha, \beta, \gamma$  linear maps the following holds true. 

\begin{enumerate} 
\item[{\rm (i)}] \label{E4}  
A factorization $\alpha= \alpha_2\cdot \alpha_1$, with  $\alpha_1:A\to A'$ and $\alpha_2: A'\to B$ linear maps, induces the short exact sequences 
\begin{equation}
\begin{aligned}
0 \to \widehat \omega(\alpha_1, \beta\alpha_2, \gamma) \to \widehat \omega(\alpha, \beta,\gamma) \to \widehat \omega (\alpha_2, \beta, \gamma)\to 0,\\
0 \to \underline \omega(\alpha_1, \beta\alpha_2, \gamma) \to \underline \omega(\alpha, \beta,\gamma) \to \underline  \omega (\alpha_2, \beta, \gamma)\to 0.
\end{aligned}
\end{equation}
\item[{\rm (ii)}]  \label {E5}A factorization $\beta= \beta_2 \cdot \beta_1$, with $\beta_1:B\to B'$ and $\beta_2: B'\to B$ linear maps, induces the short exact sequences 
\begin{equation}
\begin{aligned}
0 \to \widehat \omega( \alpha, \beta_1,\beta_2) \to \widehat \omega(\alpha, \beta_1,\gamma\beta_2)\to \widehat \omega (\alpha, \beta,\gamma)\to 0,\\
0 \to \underline \omega( \alpha, \beta_1,\beta_2) \to \underline \omega(\alpha, \beta_1,\gamma\beta_2)\to \underline \omega (\alpha, \beta,\gamma)\to 0.
\end{aligned}
\end{equation}
\item[{\rm (iii)}]  \label {E6}A factorization $\gamma= \gamma_2 \cdot \gamma_1 $, with  $\gamma_1:C\to C'$ and $\gamma_2: C'\to D$  maps, induces the short exact sequences 
\begin{equation}
\begin{aligned}
0 \to \widehat \omega( \alpha, \beta, \gamma_1) \to \widehat \omega(\alpha, \beta,\gamma)\to \widehat \omega (\alpha, \gamma_1\beta,\gamma_2)\to 0,\\
0 \to \underline \omega( \alpha, \beta, \gamma_1) \to \underline \omega(\alpha, \beta,\gamma)\to \underline \omega (\alpha, \gamma_1\beta,\gamma_2)\to 0.
\end{aligned}
\end{equation}
\end{enumerate}
\end{theorem}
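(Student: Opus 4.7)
\noindent\textbf{Plan for the proof of Theorem \ref{T2.3}.}

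Part 1 is an immediate consequence of the two preceding observations. By definition $\hat\omega(\alpha,\beta,\gamma) = \coker \hat{\mathcal D}(\alpha,\beta,\gamma)$, so Observation \ref{O2.1} produces a canonical isomorphism
\[
\hat\omega(\alpha,\beta,\gamma)^* \;\cong\; \ker\bigl(\hat{\mathcal D}(\alpha,\beta,\gamma)^*\bigr),
\]
and Observation \ref{O2.2} identifies the right-hand side with $\ker \underline{\mathcal D}(\gamma^*,\beta^*,\alpha^*) = \underline\omega(\gamma^*,\beta^*,\alpha^*)$. The second isomorphism follows by the same argument with the roles of $\hat\omega$ and $\underline\omega$ interchanged.

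For Part 3 I would work from the explicit formula
\[
\hat\omega(\alpha,\beta,\gamma) \;=\; \ker(\gamma\beta)\big/\bigl(\alpha(\ker(\gamma\beta\alpha)) + \ker\beta\bigr).
\]
In case (i) the inclusion $\alpha(\ker(\gamma\beta\alpha)) \subseteq \alpha_2(\ker(\gamma\beta\alpha_2))$ yields a natural surjection $\hat\omega(\alpha,\beta,\gamma) \twoheadrightarrow \hat\omega(\alpha_2,\beta,\gamma)$. The restriction of $\alpha_2$ to $\ker(\gamma\beta\alpha_2)$ lands in $\ker(\gamma\beta)$ and, because $\alpha_2(\ker(\beta\alpha_2)) \subseteq \ker\beta$, descends to a linear map $\hat\omega(\alpha_1,\beta\alpha_2,\gamma) \to \hat\omega(\alpha,\beta,\gamma)$ whose image visibly equals the kernel of the surjection. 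The only nontrivial step is injectivity: if $x \in \ker(\gamma\beta\alpha_2)$ satisfies $\alpha_2(x) = \alpha(a) + b$ with $a \in \ker(\gamma\beta\alpha)$ and $b \in \ker\beta$, then $\alpha_2(x - \alpha_1(a)) = b \in \ker\beta$, so $x - \alpha_1(a) \in \ker(\beta\alpha_2)$ and $x$ already lies in the denominator of $\hat\omega(\alpha_1,\beta\alpha_2,\gamma)$. Cases (ii) and (iii) follow by the same three-natural-maps pattern, and the $\underline\omega$-statements can be read off either by the symmetric computation inside $\coker(\beta\alpha)$ or by dualizing via Part 1.

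For Part 2 the plan is a diagram chase on the morphism of four-term sequences $(\alpha',\beta',\gamma') \to (\alpha,\beta,\gamma)$ whose objectwise kernel is the constant sequence $N \xrightarrow{=} N \xrightarrow{=} N \xrightarrow{=} N$ and whose objectwise cokernel is the analogous constant sequence on $M$. Since formula (\ref{E2}) immediately gives $\hat\omega(\mathrm{id},\mathrm{id},\mathrm{id}) = 0$ and $\underline\omega(\mathrm{id},\mathrm{id},\mathrm{id}) = 0$, the strategy is to split each column $0 \to N \to X' \to X \to M \to 0$ into two short exact sequences $0 \to N \to X' \to X'/\theta_X(N) \to 0$ and $0 \to X'/\theta_X(N) \to X \to M \to 0$, and use the functoriality of $\hat\omega,\underline\omega$ (built into the construction) to reduce first to the case $N=0$ and then to the case $M=0$; in each reduced case the induced map is checked to be an isomorphism by direct inspection of the numerators and denominators in the defining formulas. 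The main obstacle I expect is precisely this part: $\hat\omega$ mixes two kernel constructions with one quotient across three maps, so tracking how a four-term modification of the middle row perturbs all ingredients simultaneously requires careful bookkeeping, even though each individual reduction step is elementary.
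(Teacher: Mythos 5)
Your plan is sound and, for Parts~1 and~2, coincides with the paper's own argument. Part~1 is indeed an immediate combination of Observations~\ref{O2.1} and~\ref{O2.2}, and the reduction you propose for Part~2 (split each column into the two short exact sequences $0\to N\to X'\to X'/\theta(N)\to 0$ and $0\to X'/\theta(N)\to X\to M\to 0$, then handle $N=0$ and $M=0$ separately) is exactly what the paper does: it inserts the middle row $\img a\to\img b\to\img c\to\img d$ and factors the map $\widehat\omega(\alpha',\beta',\gamma')\to\widehat\omega(\alpha,\beta,\gamma)$ through $\widehat\omega(\alpha'',\beta'',\gamma'')$, proving the first arrow an isomorphism because $M=0$ there (all the cokernels match) and the second because $N=0$ (all the kernels match).

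For Part~3 you take a genuinely more elementary route than the paper. The paper packages the argument into two abstract lemmas (Observations~\ref{O2.4} and~\ref{O2.5}) about generic diagrams of inclusions satisfying conditions (a)--(c), then feeds in the specific vertices $\ker(\beta\alpha)$, $\ker(\gamma\beta\alpha)$, $\ker(\beta\alpha_2)$, etc.\ to obtain the three short exact sequences. You instead work directly from the explicit quotient description $\hat\omega(\alpha,\beta,\gamma)=\ker(\gamma\beta)\big/\bigl(\alpha(\ker\gamma\beta\alpha)+\ker\beta\bigr)$, verifying by hand that the natural surjection onto $\hat\omega(\alpha_2,\beta,\gamma)$ and the map induced by $\alpha_2$ from $\hat\omega(\alpha_1,\beta\alpha_2,\gamma)$ fit into a short exact sequence; your injectivity computation ($\alpha_2(x-\alpha_1(a))=b\in\ker\beta$ forces $x-\alpha_1(a)\in\ker(\beta\alpha_2)$) is exactly what the abstract condition (c) on the diagram $\hat{\mathcal D}$ encodes. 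The trade-off: the paper's version isolates the combinatorics once and reuses it for (i), (ii), (iii), while your version is self-contained and easier to check in isolation but repeats the pattern three times. Both are valid. One small caution: you offer ``dualizing via Part~1'' as an alternative to the symmetric $\coker(\beta\alpha)$ computation for the $\underline\omega$ sequences. That shortcut implicitly uses $(V^\ast)^\ast\cong V$, which fails for infinite-dimensional spaces; since the theorem is stated for arbitrary linear maps, stick with the symmetric computation (or with the paper's dual Observation~\ref{O2.5} applied to the cokernel diagram), which needs no finiteness.
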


\begin{proof}
 Item 1. follows from Observations (\ref{O2.1}) and (\ref{O2.2}). 

To prove Item 2.  notice the following.
\begin{enumerate}
[label=(\alph*)]
\item
If $N=0$, then the induced maps
\begin{equation*}
\begin{aligned}
\ker (\beta')\to \ker( \beta),\\
\ker (\gamma' \beta')\to \ker(\gamma \beta)\\
\ker (\beta'\alpha' )\to \ker(\beta\alpha),\\
\ker (\gamma' \beta' \alpha' )\to \ker (\gamma \alpha \beta).
 \end{aligned}
 \end{equation*} 
are isomorphisms hence
$\hat D(\alpha', \beta', \gamma')=\hat D(\alpha, \beta, \gamma) $ and then $\hat \omega(\alpha', \beta', \gamma')=  \hat \omega(\alpha, \beta, \gamma).$

\item 
If $M=0$ then  the obvious induced maps 
\begin{equation*}
\begin{aligned}
\coker(\beta' \alpha')\to \coker(\beta \alpha)\\
\coker(\gamma' \beta' \alpha')\to \coker(\gamma \beta \alpha)\\
\coker(\gamma' \beta' )\to \coker(\gamma \beta )\\
\coker(\beta' )\to \coker( \beta )\\
\end{aligned}
\end{equation*}
are isomorphisms 
consequently, the induced map $\hat\omega(\alpha', \beta' \gamma')\to  \hat \omega(\alpha, \beta, \gamma)$ is an isomorphism.  

\item To prove the result for $M$ and $N$ arbitrary consider the diagram 
$$
\xymatrix{
&A\ar[r]^\alpha &B\ar[r]^\beta&C\ar[r]^\gamma&D\\
&\img \ a \ar[r]^{\alpha''}\ar[u]&\img\ b\ar[r]^{\beta''}\ar[u] &\img\ c \ar[r]^{\gamma''}\ar[u] & \img \ d \ar[u]
\\ &A'\ar[r]^{\alpha'}\ar[u] &B'\ar[r]^{\beta'}\ar[u]&C'\ar[r]^{\gamma'}\ar[u]&D' \ar[u]
}$$
and the linear maps 
$$
\widehat \omega(\alpha', \beta', \gamma')\to\widehat \omega(\alpha'', \beta'', \gamma'')\to \widehat \omega(\alpha, \beta, \gamma).
$$ 
The first arrow is an isomorphism by (b) above and the second by (ai) above.  This establishes Item 2.
\end{enumerate}      
To prove Item 3  consider  the diagram 

 \begin{equation}\label {D7}
\xymatrix{A_2\ar[r]^{i^A_2}\ar@/^2pc/[rr]_{i_2}&B_2\ar[r]^{i^B_2}&C_2\\
A_1\ar[u]_{j_A}\ar@/_2pc/[rr]^{i_1}\ar[ur] \ar[r]_{i^A_1} \
\ar[urr]& B_1\ar[u]_{j_B}\ar[ur]\ar[r]_{i^B_1}&C_1\ar[u]_{j_C}}
\end{equation}
and  make the following observation.

\begin {obs} \label{O2.4}
Suppose that each of the three  diagrams $\mathbb B_1, \mathbb B_2, \mathbb B,$ associated with  (\ref{D7}),     

$\mathbb B_1$ with vertices $A_1, A_2, B_1, B_2$,

$\mathbb B_2$ with vertices $B_1, B_2, C_1, C_2$
 and 

$\mathbb B$ with vertices $A_1, A_2, C_1, C_2$

\noindent satisfy the properties (a) (b) (c) of the diagram $\hat D.$\  
Then (\ref{D7})  induces  the  exact sequence  
\begin{equation*}
\xymatrix@C-5pc { B_2/ (i^A_2(A_2)+ j_B(B_1))\ar[rd]^{i}&&  \\
0\ar[u]& C_2/ (i_2(A_2)+ j_C(C_1)) \ar[rd]^{p}&0\\ 
&&C_2/ (i^B_2 (B_2)+ j_C(C_1))\ar[u] }
\end{equation*}
with  $i$ induced  by $i^B_2,$  
well defined because $\img ( i^B_2\cdot j_B)\subseteq \img j_C$,
 and $p$ the projection induced by the inclusion $(i_2(A_2)+ j_C(C_1)) \subseteq (i^B_2 (B_2)+ j_C(C_1)).$
\vskip .1in 
\end{obs}
Clearly  $p$ is surjective and $p\cdot i=0.$  Property (c) 
implies that $i$ is injective. Properties (a), (b) (c) imply that  the sequence is exact.

A similar observation holds for the diagram 
\begin{equation}\label {D8}
\xymatrix {A_3\ar[r]^{i_3}& B_3\\ 
A_2\ar[r]^{i_2}\ar[ru]\ar[u]^{j_2^A}&B_2\ar[u]_{j_2^B}\\
A_1\ar@/^2pc/[uu]^{j^A}\ar[u]^{j_1^A}\ar[ur]\ar[ruu]\ar[r]^{i_1}&B_1\ar@/_2pc/[uu]_{j^B}\ar[u]_{j_1^B}.
}
\end{equation}

\begin{obs} \label {O2.5}
Suppose that each of the three  diagrams $\mathbb B_1, \mathbb B_2, \mathbb B,$  associated with  (\ref{D8}),    

 $\mathbb B_1$ with vertices $A_2, A_3, B_2, B_3$,

$\mathbb B_2$ with vertices $A_1, A_2, B_1, B_2$ and 

$\mathbb B$ with vertices $A_1, A_3, B_1, B_2$

\noindent satisfy the properties (a) (b) (c) of the diagram $\hat D$ 
Then (\ref{D8})  induces  the  exact sequence  
\begin{equation*}
\xymatrix @C-5pc{B_2/ (i_2(A_2)+ j^B_1(B_1))\ar[rd]^{j}&&  \\
0\ar[u]& B_3/ (i_3(A_3)+ j^B(B_1)) \ar[rd]^{p}&0\\ 
&&B_3/ (i_3 (A_3)+ j_2^B(B_2))\ar[u].}
\end{equation*}
\end{obs}

Observation \ref{O2.4} applied to  diagram  (\ref{D7}) with
\begin{equation*} 
\begin{aligned}
&A_1= \ker(\beta\alpha),                &A_2=& \ker(\gamma\beta\alpha),\\
&B_1= \ker(\beta\alpha_2),            &B_2=&\ker(\gamma\beta\alpha_2),\\
&C_1= \ker(\beta),                          &C_2=&\ker(\gamma\beta) 
\end{aligned}
\end{equation*} verifies Item 3. (i). 
 
Observation \ref{O2.5} applied to  diagram  (\ref{D8}) with 
\begin{equation*} 
\begin{aligned}
&A_1= \ker(\beta_1\alpha),             &B_1=& \ker(\beta_1),\\
&A_2= \ker \beta\alpha),                 &B_2= &\ker (\beta),\\
&A_3= \ker(\gamma \beta\alpha),   &B_3= &\ker(\gamma\beta) 
\end{aligned}
\end{equation*} verifies Item 3. (ii) 
and applied to  diagram  (\ref{D8})  with
\begin{equation*} 
\begin{aligned}
&A_1= \ker(\beta\alpha),                   & B_1= &\ker(\beta),\\
&A_2= \ker(\gamma_1\beta\alpha),  & B_2= &\ker(\gamma_1\beta),\\
&A_3= \ker(\gamma\beta\alpha),      & B_3= &\ker(\gamma\beta) 
\end{aligned}
\end{equation*} verifies Item 3. (iii). 

\end{proof}

The above considerations/proofs  were already contained in \cite {B} but under the additional hypothesis that all the linear maps were  Fredholm. 
\vskip .2in 

{\bf Direct and inverse limits}

 A totally order set $\mathcal I= (I, \leq)$ can be  viewed as a category whose objects are its elements and for any two objects $x,y \in I$ there is only one morphism if $x\leq y$ and no morphism otherwise. 
  A system $\mathcal A$ indexed by $I$ is a covariant functor from $\mathcal I$ to the category of $\kappa-$vector spaces, i.e the collection of vector spaces $ \{A_t, t\in I\}$ and linear maps $ i_t^s: A_t\to A_s, t\leq s\}.$ 
Each such system has a direct limit $\varinjlim_{t\in I} A_t$ and inverse limit $\varprojlim_{t\in I} A_t$ both vector spaces.  A priory there is also a derived  direct limit but always identical to zero and a derived inverse limit ${\varprojlim }'_{t\in I} A_t$ a vector space not always trivial, 
  as well as the obvious linear maps $ \pi_t, i_t, i $ making the diagram below commutative for any $t\leq<s$
  $$\xymatrix{ \varprojlim_{t\in I} A_t \ar[rr]^i \ar[rd]^{p_t} \ar[rdd]^{p_s}&&\varinjlim_{t\in I} A_t \\
&A_t\ar[d]^{i_t^s}\ar[ru]_{i_t}&\\
&A_s\ar[ruu]_{i_s}& }.$$
 Note that  a subset $I'$ of $I$  it is called cofinal w.r.to  $I$ to the left  resp. to the right if for any $t\in I'$  there exists $ s\in I$ s.t. $s\leq t$ resp $\geq t$ and if so  $\varprojlim_{t\in I'} A_t = \varprojlim_{t\in I} A_t $ resp. $\varinjlim_{t\in I'} A_t = \varinjlim_{t\in I} A_t .$
  
 Note that $\mathbb Z_{\leq N}$ is cofinal to $\mathbb R$ to the left  (as well to $\mathbb Z$), $\mathbb Z_{\geq N}$ is cofinal to $\mathbb R$to the right (as well to $\mathbb Z$) and $\mathbb Z$ is cofinal to $\mathbb R$  both to the left and right.  

\begin{obs}\label {O2.6} \
  
 1.  An exact sequence of  systems indexed by the same $I,$  \  
$ \cdots  \mathcal A\to \mathcal B \to \mathcal C \to \mathcal  D \to \cdots .$

 induces by passing to direct limits an exact sequence
  $$ \cdots  \varinjlim  A_t\to \varinjlim  B_t\to \varinjlim  C_t\to\varinjlim  D_\to \cdots .$$ 
  This is not true for inverse limits however one has.
  
  2. A short exact sequence of  systems  indexed by the same $I,$  \
$ 0\to   \mathcal A \to \mathcal B \to \mathcal C\to 0$
  
  induces by passing to the inverse  limits the exact sequence
  $$ 0\to   \varprojlim  A_t\to \varprojlim B_t\to \varprojlim  C_t\to  {\varprojlim} ' A_t \to 0 .$$  
   \end{obs}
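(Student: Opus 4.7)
The plan is to handle the two parts independently, each time reducing the statement to a routine piece of linear algebra.

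For Part 1, I would invoke the explicit construction of $\varinjlim B_t$: every class is represented by some $b\in B_{t_0}$, two representatives agree iff they coincide after propagating to a common stage, and a class is zero iff $i_{t_0}^s(b)=0$ for some $s\geq t_0$. Exactness at $\varinjlim B_t$ then reduces to a short diagram chase. If $[b]$ maps to zero in $\varinjlim C_t$, its image in $C_{t_0}$ vanishes in some $C_s$ with $s\geq t_0$; by exactness of $A_s\to B_s\to C_s$, the element $i_{t_0}^s(b)\in B_s$ lifts to an element of $A_s$, producing a class in $\varinjlim A_t$ that maps to $[b]$. The reverse inclusion is immediate from functoriality. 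The argument extends without change to longer exact sequences by cutting them into short ones.

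For Part 2, the plan is to first reduce to a countable cofinal subsystem using the cofinality remark already noted in the excerpt: $\mathbb Z\subset\mathbb R$ is cofinal both to the left and the right, so both $\varprojlim$ and $\varprojlim'$ may be computed over a $\mathbb Z$-indexed subsystem without loss of generality. For such a sequential system $\mathcal A$ one then has the classical Milnor presentation
\begin{equation*}
0 \longrightarrow \varprojlim_n A_n \longrightarrow \prod_n A_n \xrightarrow{\ d_{\mathcal A}\ } \prod_n A_n \longrightarrow {\varprojlim}'_n A_n \longrightarrow 0,
\end{equation*}
where $d_{\mathcal A}$ is the difference map encoding the compatibility constraint. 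Given a short exact sequence $0\to\mathcal A\to\mathcal B\to\mathcal C\to 0$, applying the exact functor $\prod_n$ yields short exact sequences of products, and the three Milnor presentations assemble into a commutative ladder with exact rows whose vertical differentials are $d_{\mathcal A}, d_{\mathcal B}, d_{\mathcal C}$. The snake lemma applied to this ladder produces precisely the claimed six-term exact sequence.

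The main obstacle is essentially bookkeeping: verifying that the ${\varprojlim}'$ introduced abstractly in the excerpt coincides with $\mathrm{coker}\, d_{\mathcal A}$ in the Milnor sequence, and checking the cofinality property for ${\varprojlim}'$ under restriction to a cofinal subsystem (which is what legitimates the reduction from $\mathbb R$ to $\mathbb Z$). Both facts are classical, and once they are in hand no deeper machinery than the snake lemma plus exactness of products in the category of $\kappa$-vector spaces is needed.
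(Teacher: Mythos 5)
Your proposal is correct and follows exactly the line the paper itself points to: the paper gives no proof of Observation 2.6 but immediately afterward records (citing Milnor) the presentations $\varinjlim A_k = \coker\,\mathcal I$, $\varprojlim A_k = \ker\,\mathcal P$, $\varprojlim' A_k = \coker\,\mathcal P$, and your Part 2 is precisely the snake lemma applied to the resulting ladder of $\mathcal P$-presentations, after the legitimate cofinality reduction to a $\mathbb Z$-indexed subsystem. Part 1 is the standard colimit diagram chase; both arguments are what the paper intends the reader to supply.
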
  
  \vskip .1in

As indicated in \cite {Mi} 
 \begin{enumerate}
\item For the set $\mathbb Z_\geq N$ 
consider the linear map 

$\mathcal I: \oplus_{k\geq N}  A_k\to  \oplus_{k\geq N}$
defined by $$\mathcal I( (a_N, a_{N+1}, \ a_{N+2}, \ \cdots )= (a_N, \ a_{N+1} - i_N ^{N+1}(a_N), \ a_{N+2} - i_{N+1} ^{N+2}(a_{N+1}),\ \cdots).$$ Then 
$$\varinjlim   A_k = \coker  \ \mathcal I$$ 
\item For the set $\mathbb Z_{\leq N}$ 
consider the linear map $\mathcal P: \prod _{k\leq N}  A_k\to  \prod_{k\leq N}$
defined by $$\mathcal P (  a_{-N}, \ a_{-N-1}, \ a_{-N-2}, \cdots )= (a_N-i_{-N-1}^{-N},\  a_{-N-1} - i_{-N-2} ^{-N-1}(a_{N-2}), \ a_{-N-2} - i_{-N-3}^{-N-2}( a_{-N-3}), \cdots  ).$$ Then          
$$\varprojlim   A_k = \ker   \mathcal P , \quad \ 
 {\varprojlim} '   A_k =coker   \mathcal P$$ 
 and note that   $\varprojlim'   A_k=0$ if  there exists $l$ s.t for $j$ large enough $i_{-j-l}^{-j}$ is surjective, i.e Mittag- Leffler condition is satisfied. 
\end{enumerate}
 \vskip .2in  

For the proof of Theorem \ref{TP} 
 one needs some additional definitions.

\begin {definition} \label {Def2.71}\

 A sub-surjection $\tilde \pi: A \rightsquigarrow A'$ is a pair $\tilde \pi:= \{\pi: A\to P, P\supseteq A'\}$ consisting of a surjective linear map $\pi$ and a subspace $A'$ of $P.$
\end{definition}

To  a directed system of sub-surjections 
$$
\xymatrix{  A _1\ar@{~{>}}[r] ^{\tilde \pi_1}& A_2 \ar@{~{>}}[r]^{\tilde \pi_2} &A_3\ar@{~{>}}[r]^{\tilde \pi_3}&\cdots   \ar@{~{>}}[r]^{\tilde \pi_{k-1}} &A_k\ar@{~{>}}[r]^{\tilde \pi_k} &A_{k+1}\ar@{~{>}}[r]^{\tilde \pi_{k+1}}&\cdots}
$$
one provides  a unique maximal directed system of surjections
$$
\xymatrix{  
A_1                          & A_2                                           &A_3                                         &                                                            &A_k                                            &A_{k+1}\\
A_1^\infty\ar@{->>}[r] ^{ \pi^\infty_1}\ar[u]^{\subseteq}& A^\infty_2 \ar@{->>}[r]^{ \pi^\infty_2}\ar[u]^{\subseteq} &A^\infty_3\ar@{->>}[r]^{ \pi^\infty_3}\ar[u]^{\subseteq}&\cdots   \ar@{->>}[r]^{ \pi^\infty_{k-1}} &A^\infty_k\ar@{->>}[r]^{ \pi^\infty_k}\ar[u]^{\subseteq} &A^\infty_{k+1}\ar@{->>}[r]^{ \pi^\infty_{k+1}\ar[u]^{\subseteq}}\ar[u]^{\subseteq}&\cdots}
$$
The construction of this sequence is rather straightforward  and it goes as follows:

Starting with the right side of the Diagram (\ref{D10}) below  (i.e. the collections  of linear maps $\{\pi_i, \supseteq  \},$ one inductively, from right to left  and from up to down (i.e  from  the lower index $i$ to lower index  $(i-1)$ and from the upper index $k$ to the upper index $(k+1)$)  one produces  the subspace $A_i^{k+1} \subset A^k_i$ and the surjective linear maps $\pi_i ^k: A_i^k\to A_{i+1}^k$
defined by: 

$A_k^{k+1}= (\pi^k)^{-1}(A_{k+1}),$ 

$A_k^{k+1}= (\pi^k)^{-1}(A_{k+1})$  and 

$\pi^{k+1}_i =$ restriction of $\pi^k_i.$ 

\noindent  Take $A^\infty _k = \cap_{i>k} A^k_i$ and $\pi_i^\infty$ the restriction of $\pi_i^{\cdots}$  and define 

\begin{equation} \label {E9}
\varinjlim _{i\to \infty}  {\tilde \pi_i:} = \varinjlim _{i\to \infty} {\pi^\infty_i}.
\end{equation}

\begin{equation}\label {D10}
\scriptsize 
\xymatrix
{
A_1\ar[r]^{\pi_1} &P_1 & & & & & & \\
A^2_1\ar[r]\ar[u]^\supseteq &A_2\ar[r]^{\pi_2}\ar[u]^\supseteq &P_2 & & & & & \\
A^3_1\ar[r]\ar[u]^\supseteq &A^3_2\ar[r]\ar[u]^\supseteq &  A_3\ar[r]^{\pi_3}\ar[u]^\supseteq &P_3 & & & & \\
\cdots\ar[u] &\cdots\ar[u] &\cdots\ar[u]&\cdots\ar[u] &&\cdots&\\
A_1^k\ar[r]^{\pi_1^{k}}\ar[u]^\supseteq &A^k_2  \ar[r] ^{\pi_2^{k}}\ar[u]^\supseteq &A^k_3\ar[r]^{\pi_3^{k}} \ar[u]^\supseteq &A^k_4\ar[r] \ar[u]^\supseteq&\cdots \ar[r] &A_k\ar[r]^{\pi_k}\ar[u]^\supseteq & P_k &\\
A_1^{k+1}\ar[r]^{\pi_1^{k+1}}\ar[u]^\supseteq &A^{k+1}_2 \ar[r]^{\pi_2^{k+1}}\ar[u]^\supseteq & A^{k+1}_3\ar[r] \ar[u]^\supseteq &A^{k+1}_4\ar[r] \ar[u]^\supseteq& \cdots \ar[r]& A^{k+1}_k\ar[r]\ar[u]^\supseteq  & A_{k+1} \ar[r]^{\pi_{k+1}}\ar[u]^\supseteq&P_{k+1} \\
\cdots\ar[u] &\cdots\ar[u]&\cdots\ar[u]&\cdots\ar[u]&&\cdots\ar[u]&\cdots\ar[u]&\cdots\ar[u]\\
A_1^{\infty}\ar[r]\ar[u]^\supseteq &A^\infty_2 \ar[r]\ar[u]^\supseteq &A_3^\infty\ar[r]\ar[u]^\supseteq  &A_4^\infty\ar[r]\ar[u]^\supseteq  &\cdots \ar[r]  &A_k^\infty \ar[r]\ar[u]^\supseteq &A^\infty_{k+1}\ar[r]\ar[u]^\supseteq &\cdots    
}
\end{equation} 

\section{Notations and definitions} \label {S3}

Most of the definitions and notations below are  the same as in \cite{BU1}  where they were  considered  for the standard (=singular) homology only.  However,  they can be considered  for any homology theory with coefficients in a fixed field $\kappa,$
in particular for 
Borel-Moore homology 
of interest in this paper. 
In this paper a homology theory  is 
a collections of $\kappa-$vector space-valued covariant homotopy functors denoted  by $H_r(\cdots)$ defined on the category of pairs of locally compact Hausdorff spaces  $(X,Y),$  $Y$ closed subset of $X,$ and of proper continuous maps which satisfy the Eilenberg-Steenrod axioms.  Standard homology is defined for any pair $(X,Y)$ and in addition satisfies Milnor continuity axiom.   Recall that the {\it continuity axiom} states that  if $ X(0) \subset \cdots\subset  X(i) \subseteq X(i+1)\subseteq \cdots \subset X$ is a filtration of an  ANR $X$  by the ANRs $X(i)$  with $X= \cup_i X(i),$ then $$H_r(X)= \varinjlim_i H_r(X_i).$$ 
In this section any  homology theory, in particular  the standard  or the Borel-Moore homology, will be denoted by $H_r.$ In the next sections the notation $H_r$ will be reserved exclusively for the standard homology and the Borel-Moore homology will acquire the left-side exponent "BM" (e.g.  $^{BM} H_r$ ).

As in \cite{BU1}, for $X$ a compact ANR,  a cohomology class $[\omega]\in H^1(X;\mathbb R)$ determines the group $\Gamma:= \img ([\omega]: H_1(X;\mathbb Z)\to \mathbb R)$ and the principal $\Gamma-$covering  $\tilde X\to X$ with $\tilde X$ a locally compact ANR. As indicated in \cite{BU1} the TC1-form  $\omega$ representing $[\omega]$ determines and is determined by a $\Gamma-$ continuous equivariant map $f:\tilde X \to \mathbb R,$ unique up to an additive constant, referred to as a lift of $\omega.$ Such a map up to an equivalence (two such maps are equivalent iff their difference is a locally constant map) can be taken as an alternative definition for a TC-1 form. 

In consistency with the notation in \cite{BU1}, for a continuous map $f:Y\to \mathbb R,$ one denotes by $Y_t:= f^{-1}((-\infty, t]),$ $Y^t:= f^{-1}([t, \infty)),$  $Y(t)= f^{-1}(t)$
resp. $Y_{<t}:= f^{-1}((-\infty, t)),$ $Y^{>t}:= f^{-1} ( (t,\infty)),$ which are closed resp. open subsets of $Y.$

Note that if one specifies $f$ in the notation above, precisely  if one writes 
$Y_a^f, Y^f_{<a}, Y^a_f, Y^{>a}_f$ instead of $Y_a, Y_{<a}, Y^a, Y^{>a},$ then 
$$Y^f_a= Y^{-a}_{-f}\ \rm{and}\ Y^f_{<a}= Y^{> -a}_{-f}.$$ 

Since $\tilde X_{<a}$ is an open set,  $H_r(\tilde X_a, \tilde X_{<a})$ is not a priory defined \footnote {for example for Borel Moore homology} , but we introduce the vector space $$\mathcal H^f_r(\tilde X_a, \tilde X_{<a}):= \varinjlim_{\epsilon \to 0} H_r(\tilde X_a, \tilde X_{a-\epsilon}).$$ 
By passing to direct limit when $\epsilon\to 0$ the long exact sequence for the pair $(\tilde X_a, \tilde X_{a-\epsilon})$  leads to the long exact sequence 

\begin{equation}\label {E11}\xymatrix{ \cdots\ar[r] &  \varinjlim_{\epsilon\to 0}H_r(\tilde X_{a-\epsilon})\ar[r] & H_r(\tilde X_a)\ar[r] & \mathcal H^f_r(\tilde X_a, \tilde X_{<a})\ar[r] &  \varinjlim_{\epsilon\to 0}H_{r-1}(\tilde X_{a-\epsilon})\ar[r] &\cdots} .
\end{equation}
Similarly one introduces the vector space $$\mathcal H^f_r(\tilde X^a, \tilde X^{>a}):= \varinjlim_{\epsilon \to 0} H_r(\tilde X^a, \tilde X^{a+\epsilon}) \footnote{ for standard homology $\varinjlim_{\epsilon \to 0}  H_r(\tilde X_{a-\epsilon})  \simeq H_r(X_{<a}$ and $\mathcal H^f_r(\tilde X^a, \tilde X^{>a})\simeq H^f_r(\tilde X^a, \tilde X^{>a})$}.$$
and by passing to direct limit when $\epsilon\to 0$ the long exact sequence for the pair $(\tilde X^a, \tilde X^{a+\epsilon})$  leads to the long exact sequence 

\begin{equation}\label {E12}\xymatrix{ \cdots\ar[r] &   \varinjlim_{\epsilon\to 0}H_r(\tilde X^{a+\epsilon})\ar[r] & H_r(\tilde X^a)\ar[r] & \mathcal H^f_r(\tilde X^a, \tilde X^{>a})\ar[r] &  \varinjlim_{\epsilon\to 0}H_{r-1}(\tilde X^{a+\epsilon})\ar[r] &\cdots} .
\end{equation}

For  $f:Y\to \mathbb R$ a continuous map 
\begin {enumerate}[label= (\alph*)]
\item $t\in \mathbb R$ is a {\it  regular value} iff for any $r\geq 0$ and any open set $U\subseteq Y$ one has $h_r(U_t, U_{<t})=h_r(U^t, U^{>t})=0 $ with $h_r$ denoting the singular homology.
If $Y$ is  a smooth manifold, possibly with boundary $\partial Y,$  and $f$ is a smooth map  with $t$ a regular value in the sense of differential calculus for both $Y$ and $\partial Y,$  then $t$ is a regular value in the sense mentioned above. 
\item $t\in \mathbb R$ is a {\it  critical value} if not regular. Denote by $CR(f)\subset \mathbb R$ the set of all critical values. 
\item $x\in X$ is a critical point if for some small enough neighborhood $U$ of $x,$  $f(x)$ is a critical value for the restriction of $f$ to $U.$ 
Denote by $Cr(f)\subset X$ the set of critical points.
\end{enumerate} 

The  continuous map $f:Y\to \mathbb R$ is {\it tame} if the following holds true: 
\begin{enumerate}[label=(\roman*)] 
\item $Y$ is a locally compact  ANR and  $f^{-1}(I)$ is an ANR for any closed interval $I\subset \mathbb R,$ 
\item $CR(f)$ is countable, hence the set of regular values is dense in $\mathbb R, $  
\item for any $t\in CR(f))$ the set $f^{-1} (t)\cap Cr(f)$ is a compact ANR. 
\end{enumerate} 
Note that $Cr(f)= C_r(-f)$ and $CR(f)= - CR(-f)$ for 
$f$ a lift of the tame TC1-form $\omega$  resp. $-f$ lift of the tame TC1-form $-\omega.$

 A TC1-form $\omega$ on the compact ANR $X$ is {\it tame} if one lift $f:\tilde X\to \mathbb R,$ and then any other lift, is a tame map and in addition the set of orbits of the free action of 
 $\Gamma$ on $CR(f)$ is finite. 
 
 A smooth closed one form $\omega$ 
 on a closed smooth manifold  with the property that for any zero one can find local coordinates s.t. the components of $\omega$ are polynomials, in particular  any Morse closed one form,  is tame.   
\vskip .1in 
As in  \cite{BU1},    
for  $f:\tilde X\to \mathbb R$ and $a\in \mathbb R$ 
one defines 
\begin{itemize}
\item  $\mathbb I^f_a(r) := \img (H_r (\tilde X_a)\to  H_r(\tilde X)),$
 
 $\mathbb I^f_{<a}(r) :=   \varinjlim_{\epsilon\to 0} \img (H_r (\tilde X_{a-\epsilon})\to  H_r(\tilde X)) ,$

$\mathbb I_f^a(r) := \img (H_r (\tilde X^a)\to  H_r(\tilde X)),$ 

 $\mathbb I_f^{>a}(r): =  \varinjlim_{\epsilon\to 0} \img (H_r (\tilde X^{a+\epsilon})\to  H_r(\tilde X)), $\} \footnote {for standard homology  $\mathbb I^f_{<a}(r)\simeq \img (H_r (\tilde X_{<a})\to  H_r(\tilde X))$   and $\mathbb I_f^{>a}(r)\simeq \img (H_r (\tilde X^{>a})\to  H_r(\tilde X))$}

$\mathbb I^f_r(a',a):= \mathbb I^f_a(r)/ \mathbb I^f_{a'}(r)$ for $a' <a,$ \quad $\mathbb I^f_r(<a',a):= \mathbb I^f_a(r)/ \mathbb I^f_{<a'}(r)$\  for $a' \leq a,
$
\item $\mathbb F^f_r(a,b):= \mathbb I^f_a(r) \cap \mathbb I_f^b(r),$ \ \  $\mathbb F^f_r(<a,b):= \mathbb I^f_{<a}(r) \cap \mathbb I_f^b(r),$\ \  $\mathbb F^f_r(a, >b):= \mathbb I^f_a(r) \cap \mathbb I_f^{>b}(r),$ 
\item  $\mathbb G^f_r(a,b):= H_r(\tilde X)/ ( \mathbb I^f_a(r) +\mathbb I_f^b(r)),$ \ $\mathbb G^f_r(<a,b):= H_r(\tilde X)/ ( \mathbb I^f_{<a}(r) +\mathbb I_f^b(r)),$\  $\mathbb G^f_r(a, >b):= H_r(\tilde X)/ ( \mathbb I^f_{a}(r) +\mathbb I_f^{>b}(r)),$

In order to lighten the notation, when implicit from the context, $f$  might be dropped off the notation.

\noindent For a {\bf box} $B\subset \mathbb R^2,$  $B= (a',a]\times [b, b'),$ $a'<a, b <b', $ consider the diagrams $ \mathcal F_r(B)$ and  $\mathcal G_r(B)$  
$$\mathcal F_r(B):= \begin{cases} \xymatrix{ \mathbb F_r(a',b')\ar[d]^{\subseteq} \ar[r]^{\subseteq} &\mathbb F_r(a,b')\ar[d]^{\subseteq} \\ \mathbb F_r(a',b)\ar[r]^{\subseteq}  & \mathbb F_r(a,b)}\end{cases},\quad  
\mathcal G_r(B):= \begin{cases} \xymatrix{ \mathbb G_r(a',b')\ar@{->>}[d]\ar@{->>}[r]&\mathbb G_r(a,b')\ar@{->>}[d]\\ \mathbb G_r(a',b)\ar@{->>}[r]
& G_r(a,b)}
\end{cases}$$
whose arrows  are the obviously induced linear maps. In the diagram  $\mathcal F_r(B)$ all these induced maps are injective  and in the diagram $\mathcal G_r(B)$ all  are surjective. 

As in \cite{BU1} one defines   
\item $\mathbb F_r(B):= \coker (\mathcal F(B))= \frac {\mathbb F_r(a,b)}{ \mathbb F_r(a',b) + \mathbb F_r(a, b')},$  
\item $\mathbb G_r(B):= \ker \mathcal G_r(B)= \ker (G_r(\alpha, \beta)\to G_r(\alpha, \beta')\times_{ G_r(\alpha' ,\beta')}G_r(\alpha', \beta).$

The inclusion 
$\mathbb I_a(r)\cap \mathbb I^b(r)  \subset (\mathbb I_{a'}(r)+\mathbb I^b(r))  \cap (\mathbb I^{b'}(r) + \mathbb I_a(r))$ induces a canonical  isomorphism 
\begin{equation} \label {E13} \boxed{ \theta_r(B) : \mathbb F_r(B) \to \mathbb G_r(B)}.\end{equation} 
\end{itemize}

For $a''< a' <a$ and $b <b' <b''$  consider  the boxes $B_1, B_2, B$ with $B=(a'',a]\times [b,b'')$
and either $B_1 = (a'',a']\times [ b,b''), B_2= (a',a]\times [b,b'')$ or $B_1=(a'',a]\times [b',b''), B_2= (a'',a]\times [b,b').$  
In both cases $B= B_1\sqcup B_2.$ As in \cite{BU1} or \cite {B} one has the following.
 
\begin{proposition} (cf. \cite {BU1}) \label {P3.1}\
The boxes $B_1, B_2, B$ described above induce the commutative diagram whose rows are  exact sequences and vertical arrows are isomorphisms.
$$\xymatrix { 0\ar[r]&\mathbb F_r(B_1) \ar[r]^{i_{B_1} ^{B_2}} \ar[d]^{\theta_r(B_1)}&\mathbb F_r(B) \ar[r]^{\pi_{B_1} ^{B_2}}\ar[d]^{\theta_r(B)}&\mathbb F_r(B_2)\ar[r]\ar[d]^{\theta_r(B_2)} &0\\
 0\ar[r]&\mathbb G_r(B_1) \ar[r]^{i_{B_1} ^{B_2}} &\mathbb G_r(B) \ar[r]^{\pi_{B_1} ^{B_2}}&\mathbb G_r(B_2)\ar[r] &0 .}$$
\end{proposition}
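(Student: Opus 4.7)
The plan is to reduce Proposition \ref{P3.1} to two pieces of work: exactness of the top ($\mathbb{F}$) row and commutativity of the two squares. Once these are in hand, the bottom ($\mathbb{G}$) row inherits exactness automatically, because $\theta_r(B_1)$, $\theta_r(B)$, $\theta_r(B_2)$ are already known to be isomorphisms by (\ref{E13}); direct transport along $\theta_r$ (or a one-line five-lemma) completes the diagram.

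First I would unfold the definitions in the vertical-split case $B_1=(a'',a']\times[b,b'')$, $B_2=(a',a]\times[b,b'')$, so that
\[
\mathbb{F}_r(B)=\mathbb{F}_r(a,b)\big/\bigl(\mathbb{F}_r(a'',b)+\mathbb{F}_r(a,b'')\bigr),
\]
and analogously for $B_1$ and $B_2$. The arrow $i_{B_1}^{B_2}$ is then induced on the numerator by the inclusion $\mathbb{F}_r(a',b)\hookrightarrow\mathbb{F}_r(a,b)$, while $\pi_{B_1}^{B_2}$ is induced by enlarging the denominator from $\mathbb{F}_r(a'',b)+\mathbb{F}_r(a,b'')$ to $\mathbb{F}_r(a',b)+\mathbb{F}_r(a,b'')$. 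Surjectivity of $\pi_{B_1}^{B_2}$ and exactness at $\mathbb{F}_r(B)$ are tautologies from these descriptions. The real content is injectivity of $i_{B_1}^{B_2}$, which reduces to the identity
\[
\mathbb{F}_r(a',b)\cap\bigl(\mathbb{F}_r(a'',b)+\mathbb{F}_r(a,b'')\bigr)=\mathbb{F}_r(a'',b)+\mathbb{F}_r(a',b'').
\]

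The key computation is precisely this equality. From $\mathbb{F}_r(x,y)=\mathbb{I}_x(r)\cap\mathbb{I}^y(r)$ one obtains the intersection formula
\[
\mathbb{F}_r(a',b)\cap\mathbb{F}_r(a,b'')=(\mathbb{I}_{a'}\cap\mathbb{I}_a)\cap(\mathbb{I}^b\cap\mathbb{I}^{b''})=\mathbb{I}_{a'}\cap\mathbb{I}^{b''}=\mathbb{F}_r(a',b''),
\]
and then the modular law, applicable because $\mathbb{F}_r(a'',b)\subseteq\mathbb{F}_r(a',b)$, delivers the displayed identity. The horizontal-split case $B_1=(a'',a]\times[b',b'')$, $B_2=(a'',a]\times[b,b')$ is handled symmetrically, exchanging the roles of the $a$- and $b$-coordinates.

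For commutativity of the squares I would observe that, by (\ref{E13}), each $\theta_r(\cdot)$ is defined at the level of representatives in $H_r(\tilde X)$, and the horizontal arrows in both rows are built from the very same inclusions and projections on $H_r(\tilde X)$; tracking a representative along either path around a square therefore yields the same element. The main obstacle in the whole argument is the modular-law identity above; everything else is formal bookkeeping, including the transfer of exactness from the $\mathbb{F}$-row to the $\mathbb{G}$-row.
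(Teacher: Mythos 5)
Your proof is correct, and it is worth noting that the paper itself does not give one: Proposition~\ref{P3.1} is stated with a pointer to \cite{BU1} and \cite{B} and no argument, so you are supplying a proof the paper only cites. The route you take is a clean and self-contained one. Writing $\mathbb F_r(B)=\mathbb F_r(a,b)/(\mathbb F_r(a'',b)+\mathbb F_r(a,b''))$ etc.\ reduces exactness at the middle and on the right to trivial facts about quotients, and isolates the only substantive point, injectivity of $i_{B_1}^{B_2}$, as the identity
$\mathbb F_r(a',b)\cap(\mathbb F_r(a'',b)+\mathbb F_r(a,b''))=\mathbb F_r(a'',b)+\mathbb F_r(a',b'')$.
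You prove this correctly from two ingredients: the modular law, legitimately applicable because $\mathbb F_r(a'',b)\subseteq\mathbb F_r(a',b)$ (note that the unrestricted distributive law would be false here, so invoking modularity is essential), and the lattice computation
$\mathbb F_r(a',b)\cap\mathbb F_r(a,b'')=(\mathbb I_{a'}\cap\mathbb I_a)\cap(\mathbb I^b\cap\mathbb I^{b''})=\mathbb I_{a'}\cap\mathbb I^{b''}=\mathbb F_r(a',b'')$,
which relies on the monotonicity $\mathbb I_{a'}\subseteq\mathbb I_a$ and $\mathbb I^{b''}\subseteq\mathbb I^b$. Transporting the exactness of the $\mathbb F$-row to the $\mathbb G$-row across the isomorphisms $\theta_r(\cdot)$ of~(\ref{E13}) is legitimate once the squares commute. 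The one place where you are a bit terse is exactly there: the commutativity is plausible because $\mathbb F_r(B)$ and $\mathbb G_r(B)$ are both subquotients of data living in $H_r(\tilde X)$ and all arrows are induced by inclusions or projections, but it would be cleaner to record explicitly that $\theta_r(B)$ sends the class of $x\in\mathbb I_a\cap\mathbb I^b$ to the class of $x$ in $\mathbb G_r(a'',b'')$ and that both horizontal composites do the same to a representative; this is a genuine (if routine) diagram chase rather than a tautology.
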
. 

\begin{obs} \label {O3.2}
As a consequence if $B_1\subset B$ are boxes, with $B_1$ located in the upper-left  corner of $B.$ then the induced linear map $i_{B_1}^{B}: \mathbb F_r(B_1)\to \mathbb F_r(B)$ is injective and, if $B_2$ is located in the down-right
corner of $B,$ then the induced linear map $\pi_{B}^{B_2}: \mathbb F_r(B)\to \mathbb F_r(B_2)$ is surjective. 
\end{obs}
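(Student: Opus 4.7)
The plan is to derive the claim directly from Proposition~\ref{P3.1} by decomposing each of the inclusions $B_1 \subset B$ and $B_2 \subset B$ into at most two successive horizontal and vertical splits of the kind covered by that proposition. The only point requiring a moment's thought is that the composition of the two Proposition~\ref{P3.1} maps agrees with the canonical map on $\mathbb F_r$, but this is immediate from the explicit formula $\mathbb F_r(a,b) = \mathbb I_a(r) \cap \mathbb I^b(r)$ and the compatibility of these intersections with inclusions of boxes.

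Write $B = (a',a] \times [b,b')$ and let $B_1 = (a',a_1] \times [b_1,b')$ be an upper-left sub-box, so $a' < a_1 \leq a$ and $b \leq b_1 < b'$. Introduce the intermediate box $B_\mathrm{int} = (a',a_1] \times [b,b')$ and apply Proposition~\ref{P3.1} to the horizontal split $B = B_\mathrm{int} \sqcup \bigl((a_1,a] \times [b,b')\bigr)$: here $B_\mathrm{int}$ plays the role of the left piece, so the induced map $\mathbb F_r(B_\mathrm{int}) \to \mathbb F_r(B)$ is the injective arrow of the resulting short exact sequence. Next apply Proposition~\ref{P3.1} to the vertical split $B_\mathrm{int} = B_1 \sqcup \bigl((a',a_1] \times [b,b_1)\bigr)$, with $B_1$ as the top piece, yielding an injection $\mathbb F_r(B_1) \hookrightarrow \mathbb F_r(B_\mathrm{int})$. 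The composition of these two injections is $i_{B_1}^B$, which is therefore injective. Degenerate situations $a_1 = a$ or $b_1 = b$ are handled by simply skipping the corresponding split.

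The lower-right case is dual. Writing $B_2 = (a_2',a] \times [b,b_2')$ with $a' \leq a_2' < a$ and $b < b_2' \leq b'$, set $B'_\mathrm{int} = (a_2',a] \times [b,b')$. The horizontal split $B = \bigl((a',a_2'] \times [b,b')\bigr) \sqcup B'_\mathrm{int}$ and the vertical split $B'_\mathrm{int} = \bigl((a_2',a] \times [b_2',b')\bigr) \sqcup B_2$ each yield a surjection via Proposition~\ref{P3.1} (this time coming from the second map of the short exact sequence); their composition is $\pi_B^{B_2}$ and is therefore surjective. I do not anticipate any substantive obstacle beyond carefully tracking which piece plays the role of $B_1$ vs.\ $B_2$ in each invocation of Proposition~\ref{P3.1}.
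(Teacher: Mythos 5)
Your proof is correct and is exactly the intended argument: Observation~\ref{O3.2} is stated "as a consequence" of Proposition~\ref{P3.1}, and Figure~1 in the paper displays precisely the four-box decomposition you use, with $B_{11}$ in the upper-left corner passing through an intermediate strip to $B$. Factoring the inclusion $B_1 \subset B$ (resp.\ $B_2 \subset B$) through the intermediate column box and applying Proposition~\ref{P3.1} once horizontally and once vertically, then noting compatibility of the induced maps with the quotient descriptions $\mathbb F_r(B) = \mathbb F_r(a,b)/(\mathbb F_r(a',b)+\mathbb F_r(a,b'))$, is just what the paper has in mind.
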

 In Figure 1 below  $B_{11} = (a'',a']\times [b', b'')$ is located in the  upper-left corner of $B= (a'',a]\times [b, b'')$ and $B_{22}= (a',a]\times[b, b')$ in the lower-right corner of $B.$
  
\hskip 1 in \begin{tikzpicture} [scale=1.2]
\draw [<->]  (0,4) -- (0,0) -- (5,0);
\node at (-.2,3.5) {$b''$};
\node at (-0.2,1.5) {$b'$};
\node at (-0.2,1) {$b$};
\node at (1,-0.2) {$a''$};
\node at (3,-0.2) {$a'$};
\node at (4,-0.2) {$a$};
\draw [line width=0.10cm] (1,1) -- (4,1);
\draw [line width=0.10cm] (4,1) -- (4,3.5);
\draw [dashed, ultra thick] (1,1) -- (1,3.5);
\draw [dashed, ultra thick] (1,3.5) -- (4,3.5);
\draw [line width=0.10cm] (1,1.5) -- (4,1.5);
\draw [line width=0.10cm] (3,1) -- (3,3.5);
\node at (2,1.25) {$B_{21}$};
\node at (2,2.5) {$B_{11}$};
\node at (3.5,1.25) {$B_{22}$};
\node at (3.5,2.5) {$B_{12}$};
\node at(2.4, -1) {Figure 1
};
\end{tikzpicture}
\vskip .1in

Define $$\boxed{^F \hat \delta_r(a,b) :=\varinjlim_{\epsilon, \epsilon' \to 0} \mathbb F_r((a-\epsilon,a]\times [b, b+\epsilon')) }\ \rm {and} \  
\boxed{^G \hat \delta_r(a,b) :=\varinjlim_{\epsilon, \epsilon' \to 0} \mathbb G_r((a-\epsilon,a]\times[b, b+\epsilon'))}.$$

In view of (\ref{E13})
one has 
\begin{equation*} \label {E14}
^F {\hat \delta }_r(a,b) = ^G \hat \delta_r(a,b),  
\end{equation*} and then simplify the notation to 
 $$ \boxed{\hat \delta _r(a,b):=  ^F {\hat \delta }_r(a,b) = ^G \hat \delta_r(a,b)}$$ and denote $$   
\boxed{ \delta_r(a,b):= \dim \hat \delta_r(a,b)}.$$
 Note also that  
 \begin{equation} \label {E14}
 \hat \delta^f_r(a,b)= 
 \frac {\mathbb F^f_r(a,b) }{  \mathbb F^f_r(<a,b) + \mathbb F^f_r(a,>b)} \end{equation} 
and in view of (\ref {E14}) one has  the obvious surjective linear map $$\boxed{\pi^\delta_{a,b}(r) :\mathbb F^f_r(a,b)\to \hat \delta^f_r(a,b)}  .$$ 
\vskip .2in

As in \cite{BU1},  for  $f:\tilde X \to \mathbb R$ a tame map and $a,b\in \mathbb R\cup \infty$ with $a<b\leq \infty,$ let
$i_a^b(r): H_r(\tilde X_a) \to H_r(\tilde X_b),$ with $\tilde X_\infty: = \tilde X,$ be
the inclusion induced linear map
and define  
\begin{itemize} 
\item  
 $\mathbb T_r(a,b): =\ker(i_a^b(r): H_r(\tilde X_a)\to H_r(\tilde X_b)),$
 \item  
$\mathbb C_r(a,b): =\coker( i_a^b(r):H_r(\tilde X_a)\to H_r(\tilde X_b)).$  
\end{itemize}
and then 
\begin{itemize}
\item  
 $\mathbb T_r(<a,b): =\varinjlim_{\epsilon \to 0} \mathbb T_r(a-\epsilon, b),$
\item  
 $\mathbb T_r(a,<b): =\varinjlim_{\epsilon \to 0} \mathbb T_r(a, b-\epsilon),$ $0<\epsilon <b-a$
 \item  
$\mathbb C_r(<a,b): =\varinjlim_{\epsilon \to 0} \mathbb C_r(a-\epsilon, b).$  
\end{itemize}

For $a' <b \leq b' <b$ \ denote by \   $i^{a,b}_{a' b'}(r): \mathbb T_r(a',b')\to \mathbb T_r(a,b)$  the induced linear map.
\vskip .1in

For a {\bf box above diagonal} $ B= (a',a]\times (b',b]$  with  $a'<a \leq b' <b\leq \infty$ observe that  $\mathbb T_r(a,b')\subseteq \mathbb T_r(a,b)$ and define
 \begin{equation}\label {E15} \mathbb T_r(B):= \frac {\mathbb T_r(a,b)}{i^{a,b}_{a' b}(r) \mathbb (T_r(a',b)) + \mathbb T_r(a, b')}.\end{equation}

In view of formula (\ref{E2}) (in subsection 2.1)    
\begin{equation}\label {E16} \mathbb T_r(B)= \hat \omega (i_{a'}^a(r), i_a^{b'}(r), i_{b'}^b(r) )\end{equation} 
with $i_{a}'^a(r), i_{a}^{b'}(r), i_{b'}^b(r)$ the linear maps 
 in  the sequence $$\xymatrix{ H_r(X_{a'})\ar[r]^{i_{a}'^a} &H_r(\tilde X_a)\ar[r]^{i_{a}'^{b'}}&H_r(\tilde X_{b'})\ar[r]^{i_{b'}^b}& H_r(\tilde X_b)}.$$

For $a''< a' <a$ and $b>b' > b''$  consider  boxes above diagonal $B_1, B_2, B$ with $B=(a'',a]\times (b'',b]$
and either $B_1 = (a'',a']\times (b'',b], B_2= (a',a]\times (b'',b]$ or $B_1=(a'',a]\times (b'',b'], B_2= (a'',a]\times (b',b].$  
In both cases $B= B_1\sqcup B_2.$ As in \cite{BU1} or \cite {B} one has the following proposition.
 
\begin{proposition} (cf. \cite {BU1}) \label {P3.3}\
The boxes $B_1, B_2, B$ as above induce the linear maps $i_{B_1} ^{B}(r)$ and $\pi_B^{B_2}(r)$ which make  the following  sequence exact.
$$\xymatrix { 0\ar[r]&\mathbb T_r(B_1) \ar[r]^{i_{B_1} ^{B}(r)} &\mathbb T_r(B) \ar[r]^{\pi_{B} ^{B_2}(r)}&\mathbb T_r(B_2)\ar[r] &0} $$
\end{proposition}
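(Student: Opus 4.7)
The plan is to reduce Proposition \ref{P3.3} directly to Item 3 of Theorem \ref{T2.3} via the identification $\mathbb{T}_r(B)=\hat{\omega}(i_{a''}^a(r), i_a^{b''}(r), i_{b''}^b(r))$ recorded in formula (\ref{E16}). In both split cases, the bisection of the box $B=(a'',a]\times (b'',b]$ corresponds precisely to a factorization of one of the three linear maps defining the $\hat{\omega}$ functor, so the desired short exact sequence is exactly one of the sequences produced by Theorem \ref{T2.3}.3.

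For the \emph{vertical split} $B_1=(a'',a']\times(b'',b]$, $B_2=(a',a]\times(b'',b]$, I write the inclusion induced map $i_{a''}^a(r)=i_{a'}^a(r)\cdot i_{a''}^{a'}(r)$, a factorization of the first argument. Setting $\alpha_1=i_{a''}^{a'}(r)$, $\alpha_2=i_{a'}^a(r)$, $\beta=i_a^{b''}(r)$ and $\gamma=i_{b''}^b(r)$, I check directly that $\hat{\omega}(\alpha_1,\beta\alpha_2,\gamma)=\mathbb{T}_r(B_1)$ (because $\beta\alpha_2=i_{a'}^{b''}(r)$), $\hat{\omega}(\alpha_2\alpha_1,\beta,\gamma)=\mathbb{T}_r(B)$, and $\hat{\omega}(\alpha_2,\beta,\gamma)=\mathbb{T}_r(B_2)$. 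Theorem \ref{T2.3}.3(i) then yields the required exact sequence, and a direct inspection of the maps produced in the proof of Theorem \ref{T2.3} shows that the first arrow is the one induced by the inclusion $\mathbb{T}_r(B_1)\subset \mathbb{T}_r(B)$ (that is, $i_{B_1}^{B}(r)$) and the second is the projection $\pi_{B}^{B_2}(r)$.

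For the \emph{horizontal split} $B_1=(a'',a]\times(b'',b']$, $B_2=(a'',a]\times(b',b]$, I use the factorization $i_{b''}^b(r)=i_{b'}^b(r)\cdot i_{b''}^{b'}(r)$ of the third argument. Setting $\alpha=i_{a''}^a(r)$, $\beta=i_a^{b''}(r)$, $\gamma_1=i_{b''}^{b'}(r)$, $\gamma_2=i_{b'}^b(r)$, I verify that $\hat{\omega}(\alpha,\beta,\gamma_1)=\mathbb{T}_r(B_1)$, $\hat{\omega}(\alpha,\beta,\gamma_2\gamma_1)=\mathbb{T}_r(B)$, and $\hat{\omega}(\alpha,\gamma_1\beta,\gamma_2)=\hat{\omega}(i_{a''}^a(r), i_a^{b'}(r), i_{b'}^b(r))=\mathbb{T}_r(B_2)$, since $\gamma_1\beta=i_a^{b'}(r)$. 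Theorem \ref{T2.3}.3(iii) then produces the exact sequence, and again I identify its arrows with $i_{B_1}^{B}(r)$ and $\pi_{B}^{B_2}(r)$.

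The only non-routine step is the last identification: that the abstract inclusion and quotient arrows constructed inside Theorem \ref{T2.3}.3 coincide with the geometrically defined maps $i_{B_1}^{B}(r)$ and $\pi_{B}^{B_2}(r)$ induced by the inclusions of boxes. This will be the main obstacle, but it is essentially bookkeeping, traced through the boxed formula for $\hat{\omega}$ in (\ref{E2}): the first map is induced by the inclusion of the relevant kernels, and the second by the natural projection of quotients corresponding to the enlargement of the subspace in the denominator. No other ingredient is needed beyond Theorem \ref{T2.3}.3 and equation (\ref{E16}).
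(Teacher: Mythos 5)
Your reduction of Proposition~\ref{P3.3} to Theorem~\ref{T2.3}.3 via the identification~(\ref{E16}) is correct, and it is exactly the route the paper sets up: the paper itself cites \cite{BU1} rather than supplying a separate proof, but it records~(\ref{E16}) and proves Theorem~\ref{T2.3} precisely so that the two bisections of the box correspond to factoring $\alpha=i_{a''}^a$ (vertical split, case~(i)) or $\gamma=i_{b''}^b$ (horizontal split, case~(iii)). Your matching of the three $\hat\omega$ terms with $\mathbb T_r(B_1)$, $\mathbb T_r(B)$, $\mathbb T_r(B_2)$ is exact in both cases, and the ``bookkeeping'' step you flag is settled by tracing Observations~\ref{O2.4} and~\ref{O2.5}: the first arrow is induced by the inclusion $\ker(\gamma\beta\alpha_2)\hookrightarrow\ker(\gamma\beta)$ (resp.\ $\ker(\gamma_1\beta)\hookrightarrow\ker(\gamma\beta)$), which is $i_{B_1}^B(r)$, and the second is the further quotient, which is $\pi_B^{B_2}(r)$.
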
 

\begin{obs} \label {O3.4}
As a consequence,  if $B_1\subset B$ is 
located in the down-left  corner of $B$ then the induced linear map 
$i_{B_1}^{B}(r): \mathbb T_r(B_1) \to \mathbb T_r(B)$  is injective and, if $B_2\subset  B$ is located in  the upper-right
corner of $B$ then the induced linear map $\pi_{B}^{B_2}(r): \mathbb T_r(B) \to \mathbb T_r(B_2)$ is surjective. 
\end{obs}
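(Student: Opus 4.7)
The plan is to derive Observation \ref{O3.4} as a direct corollary of Proposition \ref{P3.3}, by splitting the ambient box $B = (a'',a] \times (b'',b]$ in each of the two coordinate directions so as to isolate the specified sub-box in exactly two steps. Throughout I take the convention that the first coordinate increases to the right and the second upward, so ``down-left'' means small in both coordinates and ``upper-right'' means large in both.

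For the injectivity claim, let $B_1 = (a'', \alpha] \times (b'', \beta] \subseteq B$ sit in the down-left corner, i.e.\ with $\alpha \leq a$ and $\beta \leq b$. I would interpose the left strip $B_L := (a'', \alpha] \times (b'', b]$. The vertical split $B = B_L \sqcup \bigl((\alpha, a] \times (b'', b]\bigr)$ places $B_L$ in the role of ``$B_1$'' in Proposition \ref{P3.3}, yielding an injection $\mathbb{T}_r(B_L) \hookrightarrow \mathbb{T}_r(B)$. The subsequent horizontal split $B_L = B_1 \sqcup \bigl((a'', \alpha] \times (\beta, b]\bigr)$ again places $B_1$ in the role of ``$B_1$'', giving $\mathbb{T}_r(B_1) \hookrightarrow \mathbb{T}_r(B_L)$. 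Composition produces the required injection $i_{B_1}^{B}(r)$.

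For the surjectivity claim I mirror the argument. Write $B_2 = (\alpha, a] \times (\beta, b] \subseteq B$ for the sub-box in the upper-right corner and interpose the right strip $B_R := (\alpha, a] \times (b'', b]$. The vertical split $B = \bigl((a'', \alpha] \times (b'', b]\bigr) \sqcup B_R$ places $B_R$ in the role of ``$B_2$'' in Proposition \ref{P3.3}, yielding a surjection $\mathbb{T}_r(B) \twoheadrightarrow \mathbb{T}_r(B_R)$; the horizontal split $B_R = \bigl((\alpha, a] \times (b'', \beta]\bigr) \sqcup B_2$ then places $B_2$ in the role of ``$B_2$'' a second time, yielding $\mathbb{T}_r(B_R) \twoheadrightarrow \mathbb{T}_r(B_2)$. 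Composition gives the claimed surjection $\pi_B^{B_2}(r)$.

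Because the entire argument reduces to two applications of the short exact sequence of Proposition \ref{P3.3}, I do not foresee any substantive obstacle. The only bookkeeping that requires care is checking that the interposed strips $B_L, B_R$ remain boxes above the diagonal (which follows immediately from $\alpha \leq a \leq b''$) and that at each split the relevant piece occupies the ``left/bottom'' or ``right/top'' half demanded by Proposition \ref{P3.3}. Once these orientations are correctly matched at both splitting stages, the two sub-claims fall out simultaneously.
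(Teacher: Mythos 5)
Your argument is correct and is exactly the one the paper is gesturing at when it states Observation \ref{O3.4} ``as a consequence'' of Proposition \ref{P3.3}: you factor the inclusion $B_1 \subset B$ (resp.\ $B_2 \subset B$) through an intermediate strip, apply the short exact sequence once for each coordinate direction, and compose the two resulting injections (resp.\ surjections). The bookkeeping is right — in particular you correctly match $B_1$ to the ``first'' slot and $B_2$ to the ``second'' slot in each of the two splitting configurations listed in Proposition \ref{P3.3}, and you correctly note that the interposed strips stay above the diagonal because $\alpha \leq a \leq b''$. The only thing you could add for completeness is the degenerate case where the sub-box already shares the relevant edge with $B$ (so only one, or zero, applications of Proposition \ref{P3.3} are needed), but this is a triviality and does not affect the substance of the proof.
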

In  Figure 2 below  \footnote { In Figure 2 all boxes are supposed to be above diagonal  and not necessary in the first quadrant}  $B_{11}= (a'',a']\times (b'', b']$ is located in the  down-left corner of $B= (a'', a]\times (b'',b]$  and $B_{22}=(a', a]\times (b',b]$ in the upper-right corner of $B$.
\vskip .2in
\hskip 1 in \begin{tikzpicture} [scale=1.2]
\draw [<->]  (0,4) -- (0,0) -- (5,0);
\node at (-.2,3.5) {$b$};
\node at (-0.2,1.5) {$b'$};
\node at (-0.2,1) {$b''$};
\node at (1,-0.2) {$a''$};
\node at (3,-0.2) {$a'$};
\node at (4,-0.2) {$a$};
\draw  [dashed, ultra thick] (1,1) -- (4,1);
\draw [line width=0.10cm] (4,1) -- (4,3.5);
\draw [dashed, ultra thick] (1,1) -- (1,3.5);
\draw [line width=0.10cm] (1,3.5) -- (4,3.5);
\draw [line width=0.10cm] (1,1.5) -- (4,1.5);
\draw [line width=0.10cm] (3,1) -- (3,3.5);
\node at (2,1.25) {$B_{11}$};
\node at (2,2.5) {$B_{12}$};
\node at (3.5,1.25) {$B_{21}$};
\node at (3.5,2.5) {$B_{22}$};
\node at(2.4, -1) 
{Figure 2}; 
\end{tikzpicture}

\vskip .2in
  Define $$\boxed{ \hat \gamma^f_r(a,b):=\varinjlim_{\epsilon, \epsilon' \to 0}\mathbb T^f_r((a-\epsilon,a]\times (b-\epsilon',b]), \quad \gamma^f_r(a,b)= \dim \hat \gamma^f_r(a,b)
}$$ 
and observe that 
\begin{equation} \label {E17}\hat \gamma^f_r(a,b)=  \varinjlim_{\epsilon, \epsilon' \to 0}\frac {\mathbb T^f_r(a,b)}{i^{a,b}_{a-\epsilon,b} (\mathbb T^f_r(a-\epsilon,b)) + \mathbb T^f_r(a, b-\epsilon')}
= \frac {\mathbb T^f_r(a,b)}{ i_{<a,b} ^{a,b}  \mathbb T^f_r(<a.b) + \mathbb T^f_r(a, <b)} \end{equation}  

In view of (\ref{E15}) there is the obvious surjective linear map $$\boxed {\pi^\delta_{a,b}(r) :\mathbb T^f_r(a,b)\to \hat \gamma^f_r(a,b)} .$$ 
\vskip .2in

Suppose that  $f:\tilde X\to \mathbb R$ is a lift of a  tame  TC1-form $\omega.$ Suppose $H_r(\cdots)$ is a homology theory s.t. $\mathcal H^f_r(\tilde X_a, \tilde X_{<a})$ and $\mathcal H^f_r(\tilde X^a, \tilde X^{>a})$
are finite dimensional for any $a\in \mathbb R,$ hypotheses satisfied for both standard and Borel-Moore homologies.

As in section 5 of \cite{BU1}, where only  the standard homology is considered, one has the following result, valid for both standard and Borel-Moore homology theory.  

\begin{proposition}  \label {P3.5}\ 

Under the above hypotheses 
the supports of $\delta^f_r$ and $ \gamma^f_r$ are subsets of $CR(f)\times CR(f)$ with the following properties:
\begin{enumerate} 
\item If $(a,b)\in \supp \ \delta^f_r$ resp.  $(a,b)\in \supp \ \gamma^f_r$ then for any $g\in \Gamma$ one has   $(a+g, b+g) \in \supp\ \delta^f_r$ resp. $(a+g, b+g) \in \supp\ \gamma^f_r,$ hence  $\supp \delta^f_r$ and $\supp \gamma^f_r$ are $\Gamma-$invariant w.r. to the action $|mu(g, (x,y))= (g+x, g+y).$
\item For any $a\in \mathbb R$ $\supp \ \delta^f_r\cap  \mathbb R\times a$,  $\supp\ \delta^f_r\cap a\times \mathbb R,$  $\supp\ \gamma^f_r\cap \mathbb R\times a,$ $\supp\ \gamma^f_r\cap a\times \mathbb R$ are finite sets,  empty if $a\in \mathbb R \setminus CR(f).$
\item There exists a finite set of lines in the plane $\mathbb R^2,$ $\Delta^{\delta}_{t_i} (r)$  resp. $\Delta^\gamma_{t_i} (r),$   given by the equations $y= x+t^{\delta}_i,$ $i=1,2,\cdots N^\delta_r$ resp. 
$y=x+t^\gamma_i,$
$i=1,2,\cdots N^\gamma_r$  s.t. $\supp\ \delta^f_r \subset \cup_{i=1, \cdots, N^\delta_r} \Delta^\delta_{t^\delta_i}$ resp. 
$\supp\ \gamma^f_r \subset \cup_{i=1, \cdots, N^\gamma_r} \Delta^\gamma_{t^\delta_i}.$
\end{enumerate}
\end{proposition}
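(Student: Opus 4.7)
The plan is to verify the four assertions in order, using the direct-limit long exact sequences (\ref{E11})--(\ref{E12}), the $\Gamma$-equivariance of the lift $f$, and Proposition \ref{PP1}. For item 1, suppose $a$ is a regular value; then $\mathcal H^f_r(\tilde X_a,\tilde X_{<a})=0$, so by (\ref{E11}) the map $\varinjlim_\epsilon H_r(\tilde X_{a-\epsilon})\to H_r(\tilde X_a)$ is surjective. Consequently $\mathbb I^f_a(r)=\mathbb I^f_{<a}(r)$, so $\mathbb F^f_r(a,b)=\mathbb F^f_r(<a,b)$, and (\ref{E14}) forces $\hat\delta^f_r(a,b)=0$. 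Lifting any $x\in\ker(i_a^b)$ along the iso $H_r(\tilde X_{a-\epsilon})\simeq H_r(\tilde X_a)$ shows $\mathbb T^f_r(<a,b)\to\mathbb T^f_r(a,b)$ is surjective, whence $\hat\gamma^f_r(a,b)=0$ by (\ref{E17}); the case of $b$ regular is symmetric via (\ref{E12}). For item 2, the deck transformation by $g\in\Gamma$ is a self-homeomorphism of $\tilde X$ with $f(gx)=f(x)+g$, restricting to homeomorphisms $\tilde X_a\to\tilde X_{a+g}$ and $\tilde X^b\to\tilde X^{b+g}$ compatible with every inclusion; every ingredient of $\hat\delta^f_r$ and $\hat\gamma^f_r$ is carried isomorphically to its $g$-translate, giving $\Gamma$-invariance of both supports.

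For item 3, fix $a\in\mathbb R$. Proposition \ref{PP1} yields
\begin{equation*}
\sum_{t\in\mathbb R}\delta^f_r(a,t)+\sum_{t>a}\gamma^f_r(a,t)+\sum_{t<a}\gamma^f_{r-1}(t,a)\le\dim\mathcal H^f_r(\tilde X_a,\tilde X_{<a})<\infty,
\end{equation*}
so only finitely many $t$ contribute; applying the same bound with $r$ replaced by $r+1$ also controls $\gamma^f_r(t,a)$. The dual bound from $\dim\mathcal H^f_r(\tilde X^a,\tilde X^{>a})<\infty$, obtained by applying Proposition \ref{PP1} to $-f$ via $\tilde X^a_f=\tilde X^{-f}_{-a}$, handles the remaining finiteness of $\delta^f_r(t,a)$. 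Emptiness at $a\notin CR(f)$ is immediate from item 1. For item 4, tameness of $\omega$ makes $CR(f)/\Gamma$ finite; pick representatives $a_1,\ldots,a_K$. By item 3 each $S_i:=\{t:\delta^f_r(a_i,t)\neq 0\}$ is finite, and by item 2 every point of $\supp\delta^f_r$ lies in the diagonal $\Gamma$-orbit of some $(a_i,t)$ with $t\in S_i$, which sits on the line $y=x+(t-a_i)$. Therefore $\supp\delta^f_r\subset\bigcup_{i,\,t\in S_i}\{y=x+(t-a_i)\}$, a finite union of lines parallel to the diagonal; the identical argument produces the lines for $\gamma^f_r$.

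The main obstacle is item 1: one must derive the simultaneous vanishing of $\hat\delta^f_r$ and $\hat\gamma^f_r$ from the regular-value condition (phrased via singular homology) for the homology theory actually under consideration, using the direct-limit long exact sequences. The lifting step for $\hat\gamma$ in particular requires care, since $\mathbb T^f_r(<a,b)$ is itself a direct limit and surjectivity of the induced map to $\mathbb T^f_r(a,b)$ must be verified element-by-element.
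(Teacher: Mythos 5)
Your items on $\Gamma$-invariance and on the lines through the support are essentially the argument the paper gives: equivariance of the lift and then picking one representative per $\Gamma$-orbit of critical value, using finiteness of $CR(f)/\Gamma$. The divergence is in the finiteness claim (item 2 of the proposition), and here there is a circularity problem.

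You derive the finiteness of the slices $\supp\delta^f_r\cap(a\times\mathbb R)$, $\supp\gamma^f_r\cap(a\times\mathbb R)$, etc.\ by citing Proposition~\ref{PP1} as an upper bound $\sum_t\delta^f_r(a,t)+\sum_{t>a}\gamma^f_r(a,t)+\sum_{t<a}\gamma^f_{r-1}(t,a)\le\dim\mathcal H^f_r(\tilde X_a,\tilde X_{<a})$. But Proposition~\ref{P3.5} is stated for an arbitrary homology theory with the finite-dimensionality hypotheses (in particular for Borel--Moore homology), and the general version of the identity in Proposition~\ref{PP1} is established in this paper only in Theorem~\ref{T3}, whose proof explicitly invokes the decompositions obtained in the course of proving Proposition~\ref{P3.5} itself (the isomorphisms recorded in items (1)--(4) around (\ref{E27}), and the step-function analysis preceding them). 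So invoking Proposition~\ref{PP1} to prove Proposition~\ref{P3.5} for any homology other than the standard one is circular. The paper instead proves finiteness from scratch: it first shows, via the exact sequences and the bounds (\ref{E19})--(\ref{E22}), that the auxiliary spaces $\mathbb F_r(a\times[t,b))$, $\mathbb T_r((a',t]\times b)$, $\mathbb T_r(a\times(b',t])$, $\mathbb F_r((a',t]\times b)$ are finite-dimensional, then observes that the assignments $t\mapsto\dim$ of these are bounded monotone $\mathbb Z_{\ge 0}$-valued step functions, identifies the jump at $t$ with the relevant $\delta^f_r$ or $\gamma^f_r$ via Propositions~\ref{P3.1} and~\ref{P3.3}, and concludes the number of jumps is finite. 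That argument uses nothing downstream of Proposition~\ref{P3.5} and works uniformly for any homology theory satisfying the finiteness hypotheses.

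You also correctly flag, but do not resolve, the issue that the definition of regular value is phrased via singular homology while the vanishing you need is for the homology theory at hand. The paper handles this via Proposition~\ref{PP2} (the linear maps $\theta_r(a)$ are isomorphisms), which is proved independently in Section 6. To make your item 1 argument watertight you would have to cite that result rather than appeal directly to (\ref{E11})--(\ref{E12}) as if $\mathcal H^f_r(\tilde X_a,\tilde X_{<a})$ automatically vanished for $a$ regular.
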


\begin{proof} (sketch)

Item (1) follows from the $\Gamma-$equivariance of the lift $f$ and the definitions, cf (\ref {E14}) and (\ref{E15}).   

To verify item (2) 
proceed as in  \cite{BU1}. 
 
Introduce:
\begin{itemize}
\item  for $a' <a,$ $b<b' $ 
\begin{equation*}
\begin{aligned} 
\mathbb F^f_r((a',a]\times b):=& \varinjlim _{\epsilon\to 0} \mathbb F^f_r((a',a]\times [b, b+\epsilon))\\ 
\mathbb F^f_r(a\times [b,b')):=& \varinjlim _{\epsilon\to 0} \mathbb F^f_r((a-\epsilon,a]\times [b, b'))
\end{aligned}
\end{equation*}
and note  that $\mathbb F^f_r(a\times [b,b'))= \mathbb F^{-f}_r((-b',-b]\times -a)$
\footnote {in view of the fact that ${\tilde X}^f_a= {\tilde X}^{-a}_{-f}$ and 
${\tilde X}^f_{<a}= {\tilde X}^{>-a}_{-f}$}, 
\item for $ a' <a \leq b' <b\leq \infty $
\begin{equation*}
\begin{aligned} 
\mathbb T^f_r((a',a]\times b):= &\varinjlim _{\epsilon\to 0} \mathbb T^f_r((a',a]\times (b-\epsilon, b]), \ \ 0< \epsilon <b-a \\
\mathbb T^f_r((a',b)\times b):= &\varinjlim _{b>a\to b} \mathbb T^f_r((a',a]\times b))\\ 
\mathbb T^f_r(a\times (b',b]):= &\varinjlim _{\epsilon\to 0} \mathbb T^f_r((a-\epsilon, a]\times (b', b])).
\end{aligned}
\end{equation*}
\end{itemize}
Clearly 
\begin{equation}\label {E18}
\begin{aligned}
\hat \delta^f_r(a,b)= &\varinjlim _{ \epsilon, \epsilon'\to 0}
 \mathbb F^f_r((a-\epsilon,a] \times [b, b+\epsilon'))= \varinjlim _{\epsilon\to 0} \mathbb F^f_r((a-\epsilon,a]\times b)=\varinjlim _{\epsilon' \to 0} \mathbb F^f_r (a\times [b, b+\epsilon')),\\ 
\hat \gamma^f_r(a,b)=& \varinjlim _{ \epsilon, \epsilon'\to 0} \mathbb T^f_r((a-\epsilon,a] \times (b-\epsilon', b])= \varinjlim _{\epsilon\to 0} \mathbb T^f_r((a-\epsilon,a]\times b)=\varinjlim _{\epsilon' \to 0} \mathbb T^f_r(a\times(b-\epsilon', b]),\\
 \hat \delta^f_r(a,b)=& \ \hat \delta^{-f}_r(-b, -a).
\end{aligned}
\end{equation}
 Observe that because $\mathcal H_r(\tilde X_a, \tilde X_{<a})$ 
 is a finite dimensional vector space,  the exact sequence (\ref{E11}) implies 
  $\dim \mathbb T_{r-1}(<a,a) <\infty $  and  
  $\dim \mathbb C_r(<a,a)<\infty$ for any  lift $f$ of the tame $\omega.$  
 
 In view of Proposition \ref{P3.3} and the exact sequence of the triple $(\tilde X_a\subseteq \tilde X_{b-\epsilon} \subseteq \tilde X_b)$
on has  
\begin{equation}\label {E19}
\dim \mathbb T_{r}((a',a]\times b) \leq \dim \mathbb T_{r}((a',b)\times b) \leq \dim \mathbb T^f_r(<b,b) \leq \dim \mathcal H_{r+1} (\tilde X_b, \tilde X_{<b})<\infty 
 \end{equation} 
 and in view of definitions
 \begin{equation} \label{E20}
\dim (\mathbb T^f_r(a,b) / \mathbb T^f_r(<a,b)) \leq \dim \mathbb C_r(<a,a) \leq \dim \mathcal H_{r} (\tilde X_a, \tilde X_{<a})<\infty 
\end{equation} 
\begin{equation} \label {E21}
\dim (\mathbb I^f_a(r) / \mathbb I^f_{<a} (r))  \leq \dim \mathbb C_r(<a,a) \leq \dim \mathcal H_{r} (\tilde X_a, \tilde X_{<a})<\infty    
 \end{equation} 
 
Note that $f$ a lift of the tame TC1-form $\omega$ makes $-f$ a lift of the tame TC-1 form $-\omega$ and  one has 
$$\mathbb I^b_f(r)= \mathbb I^{-f}_{-b}(r),\ \  
\mathbb I^{>b}_f(r)= \mathbb I^{-f}_{< -b}(r),$$ hence 
\begin{equation} \label {E22}
\dim (\mathbb I^{b}_f(r) / \mathbb I^{>b}_f(r))= \dim (\mathbb I_{-b}^{-f} (r) / \mathbb I_{-b}^{-f}(r)) < \dim \mathbb C_r^{-f} (<{-b}. {-b}) <\dim \mathcal H_r(\tilde X^b_f, \tilde X^{>b}_f) < \infty.
\end{equation}  

One can extend $\mathbb F_r(a\times [\alpha, \beta)),$  and $\mathbb F_r((\alpha, \beta]\times c)$ with $\alpha < \beta$ to 
$\alpha=-\infty$ or $\beta=\infty$ by defining  
\begin{equation}\label{E23}
\begin{aligned} 
\mathbb F_r(a\times [\alpha, \infty))  = &\varprojlim_{\alpha <\beta\to \infty } \mathbb F_r(a\times [\alpha,\beta))\\
\mathbb F_r(a\times (-\infty, \beta))  = &\varinjlim_{\beta>\alpha\to -\infty } \mathbb F_r(a\times [\alpha,\beta))\\
\mathbb F_r(a\times (-\infty, \infty))  = &\varinjlim_{\alpha \to -\infty } \mathbb F_r(a\times [\alpha, \infty))=\\
\mathbb F_r((-\infty, \beta] \times c)  = &\varprojlim_{\beta>\alpha\to -\infty } \mathbb F_r((\alpha, \beta] \times c)\\
\mathbb F_r((\alpha, \infty )\times c)  =& \varinjlim_{\alpha <\beta\to \infty } \mathbb F_r((\alpha, \beta] \times c)\\
\mathbb F_r((-\infty, \infty )\times c)  = &\varinjlim_{\beta\to \infty } \mathbb F_r((-\infty, \beta] \times c)= \\
\end{aligned}
\end{equation}

Similarly one can extend $\mathbb T_r (a\times (\alpha, \beta]), a \leq \alpha <\beta  <\infty $ to 
$\beta=  \infty$ 
\begin{equation}\label{E24}
\begin{aligned} 
\mathbb T_r(a\times (\alpha, \infty))  = \varinjlim_{\alpha <\beta\to \infty } \mathbb T_r(a\times (\alpha,\beta])\\
\end{aligned}
\end{equation}
and $\mathbb T_r ((\alpha, \beta]\times c), -\infty <\alpha <\beta <c$ to the case 
$\alpha=-\infty$ or / and $\beta=c$ 
by defining 
\begin{equation}\label{E25}
\begin{aligned} 
\mathbb T_r((-\infty, \beta] \times c)  = \varprojlim_{\beta>\alpha\to -\infty } \mathbb T_r((\alpha, \beta] \times c)\\
\mathbb T_r((\alpha, c) \times c)  = \varinjlim_{\alpha<\beta \to c } \mathbb T_r((\alpha, \beta] \times c)\\
\mathbb T_r((-\infty, c) \times c)  = \varinjlim_{\beta\to c } \mathbb T_r((-\infty, \beta] \times c)\\
\end{aligned}
\end{equation}

In view of (\ref {E21}) resp. (\ref{E19}), resp. (\ref{E20}), resp.( \ref {E22})  and of Propositions \ref{P3.1} and \ref{P3.3} the assignments 
\begin{enumerate}
\item  \ \  $ (- \infty, b) \ni t\rightsquigarrow \dim \mathbb F_r(a\times [t,b)),  -\infty <t \leq b$
\item \ \   $(a',b)\ni t\rightsquigarrow \dim \mathbb T_r((a',t]\times b),  -\infty \leq a' <t <b$
\item \ \    $(b',\infty )\ni t\rightsquigarrow \dim \mathbb T_r(a\times (b', t]),  a\leq b' <t <\infty$ .
\item  \ \   $(a',\infty)\ni t\rightsquigarrow \dim \mathbb F_r((a',t]\times b),  a'\leq t <\infty$
\end{enumerate}
are bounded $\mathbb Z_{\geq 0}-$valued functions  with (1) decreasing and (2), (3) and (4) increasing in $t,$ hence step functions with finitely many jumps  at 
\begin{enumerate}
\item $-\infty < t^a_1 <t^a_2 <\cdots < t^a_{N_a} <\infty$  
\item $-\infty < t^1_b <t^2_b <\cdots < t_b^{M_b} <b $
\item $a < t^a_1 <t^a_2 <\cdots < t^a_{M_a} <\infty $
\item $-\infty < t^1_b <t^2_b <\cdots < t_4^{N_b} <\infty$ 
\end{enumerate}
$N_a, N_b, M_a, M_b$ nonnegative integers.

 The jump at $t$  
given, in view of (\ref{E18}), by  

$\lim_{\epsilon\to 0}\dim ( \mathbb F_r(a\times [t,b))/ \mathbb F_r(a\times [t+\epsilon,b))= \lim_{\epsilon\to 0} \dim \mathbb F_r(a\times [t,t+\epsilon)) = \delta_r(a,t)$ 
in view of Proposition \ref {P3.1}
 for assignment (1), 

$\lim_{\epsilon\to 0}\dim ( \mathbb T_r((a',t]\times b)/ \mathbb T_r((a', t-\epsilon] \times ,b))= \lim_{\epsilon\to 0} \dim \mathbb T_r((t-\epsilon, t] \times b) = \gamma_r(t,b)$ 
in view of Proposition \ref {P3.3}
 for assignment (2), 

$\lim_{\epsilon \to 0}  \dim ( \mathbb T_r(a \times (b', t]) / \mathbb T_r(a \times (b', t-\epsilon])=\lim_{\epsilon\to 0} \dim \mathbb T_r(a\times (t-\epsilon,t]) = \gamma_r(a,t)$ 
in view of Proposition \ref {P3.3}
 for assignment (3), 

$\lim_{\epsilon\to 0}\dim ( \mathbb F_r((a',t]\times b)/ \mathbb F_r(a', t-\epsilon]\times b))= \lim_{\epsilon\to 0} \dim \mathbb F_r((t-\epsilon,t]\times b) = \delta_r(t,b)$ 
in view of Proposition \ref {P3.1}
 for assignment (4), 

In particular we have the following:

\begin{enumerate} 
\item  Suppose $\dim H_r( X_a, X_{<a}) <\infty.$  Then 
\begin{enumerate} 
\label{E26}
\item $\hat \delta^f_r(a,t)=0$ of $t\ne \{ t^a_1, \cdots t^a_{N_a}\},$ hence  the map $\delta^f_r: a\times \mathbb R\to \mathbb Z_{\geq 0}$ is a configuration of points,
\item for any $x,y$ with $-\infty \leq x\leq y\leq \infty$   
\begin{equation}
\mathbb F_r(a\times [x, y))\simeq \bigoplus_{ x\leq t <y}\hat \delta^f_r(a, t).\end{equation}
\end{enumerate}

\item Suppose $\dim H_r( X_b, X_{<b}) <\infty.$  Then 
\begin{enumerate} 
\label{E27}
\item $\hat \delta^f_r(t, b)=0$ of $t\ne \{t_b^1, \cdots t_b^{M_b}\}$  hence  the map $ \gamma^f_r:  (-\infty, b)\to \mathbb Z_{\geq 0}$ is a configuration of points.
\item for any $x, y$ with   $-\infty \leq x <y<b$ one has 
\begin{equation}
\begin{aligned}\mathbb T_r((x,y]\times b)\simeq \bigoplus_{x <t \leq y} {\hat \gamma^f_r(t,b)} \\ 
\mathbb T_r((x,b)\times b)\simeq \bigoplus_{x <t <b} {\hat \gamma^f_r(t,b)} .
\end{aligned} 
\end{equation}
\end{enumerate}

\item 
Suppose $\dim H_r( X_a, X_{<a}) <\infty.$   Then 
\begin{enumerate} 
\label{E28}
\item $\hat \gamma ^f_r(a,t)=0$ of $t\ne \{ t_b^1, \cdots t_b^{M_b}\}$ hence  the map $\gamma^f_r: a\times (a, \infty)\to \mathbb Z_{\geq 0}$ is a configuration of points,
\item  for any $x, y$with   $a\leq x <y\leq \infty$ one has  
 \begin{equation}
\mathbb T_r(a\times (x, y])\simeq \bigoplus_{ x< t \leq y}  {\hat \gamma^f_r(a, t)}\end{equation}
\end{enumerate}

\item Suppose $\dim H_r( X^b, X^{>b}) <\infty.$  Then 
\begin{enumerate} 
\label{E29}
\item $\hat \delta^f_r(t, b)=0$ of $t\ne \{t_b^1, t_b^2, \cdots t_b^{N_b}\}$ hence  the map $\delta^f_r:  \mathbb R\times b \to \mathbb Z_{\geq 0}$ is a configuration of points,
\item for any $-\infty\leq x\leq y \leq \infty$ one has 
\begin{equation} \mathbb F_r((x,y]\times b)\simeq \bigoplus_{ x<t \leq y}\hat \delta^f_r(t,b) \end{equation}
\end{enumerate}
\end{enumerate}
Part (a) of (1), (2), (3) and (4) establish item 2.  Part (b)  provide calculations used below. 
\vskip .1in
To prove  Item 3  
first observe that both $\supp  \delta_r^f$ and $\supp  \gamma^f_r$ are  $\Gamma-$invariant w.r. to the  diagonal action  of $\Gamma$ on $CR(f)\times CR(f)\subset \mathbb R\times \mathbb R,$
$\mu (g, (x,y))\rightsquigarrow (g+x, g+y)).$ 

A collection $\mathcal B$ of elements of $\supp \delta^f_r$ or $\supp \gamma^f_r$ is called a {\it base} if :
\begin{enumerate}
\item for any $v\in \supp \delta^f_r$ resp. $v \in \supp \gamma^f_r$  there exists $u\in \mathcal B$ and $g\in \Gamma$ s.t. $v= \mu(g, u)$ 
\item If $u_1, u_2 \in \mathcal B$ and $u_2= \mu(g, u_1)$ then $g=0.$
\end{enumerate}

For a base $\mathcal B$  define 
$$^{\mathcal B}\delta^\omega (t) := \sum _{(a,b) \in \mathcal B \mid b-a=t} \underline \delta^f_r(a,b), \ \ \underline \delta(a,b)= b-a$$ resp.
$$^{\mathcal B}\gamma^\omega(t) := \sum _{(a,b) \in \mathcal B \mid b-a=t} \underline \gamma^f_r(a,b), \ \ \underline  \gamma(a,b)= b-a$$

which, when $\mathcal B$ is finite, are clearly a $\mathbb Z_{\geq 0}-$valued configurations on $\mathbb R$ resp. $\mathbb R_{>0}$
and in view of the properties of the base are independent of $\mathcal B.$ 

Each finite base $\mathcal B$ provides a finite collection of real numbers $t= b-a, (a,b)\in \mathcal B,$ which in view of the properties of the base is independent of $\mathcal B$ and of the lift $f$ 
hence define the configurations $\delta^\omega_r$ and $\gamma^\omega.$

To finalize the proof one needs  to check the existence of bases$\mathcal B.$ In the case $\delta$ we proceed as follows.
For any $o\in CR(f)/ \Gamma$ one chooses $a^o\in CR(f)$ $a^o\in o ;$ then  the collection 
$$\mathbb B:= \bigcup_{o\in CR(f)/\Gamma}   \{ (a^o, t^{a^o}_1)), \cdots (a^o, t^{a^o}_{N_{a^o}}) \}$$
is a base. 
One can also   one chooses $b_o\in CR(f)$ $b_o\in o;$ then  the collection 
$$\mathbb B:= \bigcup_{o\in CR(f)/\Gamma}   \{ (b_o, t_{b_o}^1)), \cdots (b_o, t_{b_o}^{N_{b_o}}) \}$$
is a base. 
The case of $\gamma$  is entirely similar.

\end{proof} 
\vskip .2in

Define    $$\mathbb T_r(-\infty,a) :=  \varprojlim _{a> \alpha\to -\infty} \mathbb T_r(\alpha, a)$$

$$\mathbb T_r(<a, \infty):= \varinjlim _{a< x\to \infty} T_r(<a,x)$$  

$$\mathbb T_r(a, \infty):= \varinjlim _{a< x\to \infty}  T_r(a,x)$$  

and denote by

$$
\hat \lambda_r(a) := 
\img (\mathbb T_r(-\infty, a) \to T_r( <a,a))$$ which is a f.d. vector space when $\mathcal H_r(X_a, X_{<a})$ is finite dimensional.

\begin {obs} \label {O3.6}\  

\begin{enumerate}
\item  $\mathbb T_r (a\times (a,\infty))= \mathbb T_r (a,\infty)/ i \mathbb T_r( <a,\infty)$  
\item $\mathbb T_r((-\infty,a)\times a) = \mathbb T_r (<a.a)/ i \mathbb T_r(-\infty,a)= \mathbb T_r (<a.a)/ i \mathbb T_r(-\infty,a)= \mathbb T_r (<a.a)/ (\hat \lambda_r(a))$
\end{enumerate}
\end{obs}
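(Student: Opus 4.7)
\medskip

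\noindent\textbf{Proof plan.} The plan is to reduce both identities to the basic box formula
\[
\mathbb T_r(B) \ =\ \mathbb T_r(a,b)\,\big/\,\bigl(i\mathbb T_r(a',b) + \mathbb T_r(a,b')\bigr), \qquad B = (a',a] \times (b',b],
\]
from (\ref{E15})--(\ref{E16}), combined with Proposition \ref{P3.3} and the compatibility of direct limits with quotients. The second equality of item (2) is then immediate from the definition $\hat\lambda_r(a) = \mathrm{img}\bigl(\mathbb T_r(-\infty,a) \to \mathbb T_r(<a,a)\bigr)$ preceding the observation, so only the first equality of each item requires work.

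For item (1), I would use (\ref{E24}) and the direct-limit definition $\mathbb T_r(a \times (b',b]) = \varinjlim_{\epsilon\to 0} \mathbb T_r((a-\epsilon,a]\times (b',b])$ to write
\[
\mathbb T_r(a \times (a,\infty)) \ =\ \varinjlim_{x \to \infty}\varinjlim_{\epsilon \to 0} \mathbb T_r\bigl((a-\epsilon,a] \times (a,x]\bigr).
\]
The box formula gives $\mathbb T_r(a,x)/(i\mathbb T_r(a-\epsilon,x) + \mathbb T_r(a,a))$, and since $i_a^a$ is the identity we have $\mathbb T_r(a,a)=0$. The exactness of $\varinjlim$ then lets both direct limits commute with the quotient and produces $\mathbb T_r(a,\infty)/i\mathbb T_r(<a,\infty)$, as desired.

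For item (2), I would proceed analogously using (\ref{E25}):
\[
\mathbb T_r((-\infty,a)\times a) \ =\ \varinjlim_{\beta \to a^-} \varprojlim_{\alpha \to -\infty} \varinjlim_{\epsilon \to 0} \mathbb T_r\bigl((\alpha,\beta]\times(a-\epsilon,a]\bigr),
\]
whose inner term is $\mathbb T_r(\beta,a)/(i\mathbb T_r(\alpha,a) + \mathbb T_r(\beta,a-\epsilon))$. Running the limits in order, $\varinjlim_\epsilon$ replaces $\mathbb T_r(\beta,a-\epsilon)$ by $\mathbb T_r(\beta,<a)$, $\varprojlim_\alpha$ replaces $i\mathbb T_r(\alpha,a)$ by $i\mathbb T_r(-\infty,a)$, and finally $\varinjlim_{\beta\to a^-}$ sends $\mathbb T_r(\beta,a)$ to $\mathbb T_r(<a,a)$ while $\mathbb T_r(\beta,<a)$ collapses to $\varinjlim_{\beta\to a^-}\mathbb T_r(\beta,<a)=0$ (by continuity, this is the kernel of the identity on $H_r(\tilde X_{<a})$). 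The output is $\mathbb T_r(<a,a)/i\mathbb T_r(-\infty,a)$, and $i\mathbb T_r(-\infty,a) = \hat\lambda_r(a)$ by definition.

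The hard part will be justifying the interchange of $\varprojlim_\alpha$ with the quotient, since inverse limits are not exact in general (cf.\ Observation \ref{O2.6}). However, the standing assumption that $\mathcal H_r(\tilde X_a,\tilde X_{<a})$ is finite-dimensional, together with (\ref{E19})--(\ref{E20}), forces $\mathbb T_r(\beta,a)$ to be finite-dimensional, and hence the nested family $\{i\mathbb T_r(\alpha,a)\}$ of subspaces of $\mathbb T_r(\beta,a)$ stabilizes as $\alpha \to -\infty$. This verifies the Mittag--Leffler condition, kills the $\varprojlim{}'$ term, and legitimizes the interchange.
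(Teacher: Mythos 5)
Your reduction of both items to the box formula (\ref{E15})--(\ref{E16}) combined with exactness of $\varinjlim$ is the right approach, and in fact the paper's own proof of Observation \ref{O3.6} is the single sentence ``Apply the definitions,'' so your more explicit unpacking is in the same spirit. Item (1) is handled correctly: all three limits involved ($\varinjlim_\epsilon$, $\varinjlim_x$) are direct limits, the observation $\mathbb T_r(a,a)=\ker(\mathrm{id})=0$ kills one term in the denominator, and exactness of $\varinjlim$ does the rest. Your observation that $\varinjlim_{\beta\to a^-}\mathbb T_r(\beta,<a)=0$ is also correct, though the parenthetical ``by continuity'' is a red herring — it follows purely from exactness of $\varinjlim$ applied to $\mathbb T_r(\beta,<a)=\ker(\mathbb T_r(\beta,a)\to\mathbb T_r(<a,a))$, so the step works equally well for Borel--Moore homology, which the paper needs.

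The one genuine misstep is in your justification of the $\varprojlim_\alpha$ interchange for item (2). You claim that $\mathcal H_r(\tilde X_a,\tilde X_{<a})$ being finite-dimensional, together with (\ref{E19})--(\ref{E20}), ``forces $\mathbb T_r(\beta,a)$ to be finite-dimensional.'' That is not what those estimates say, and it is generally false: $\mathbb T_r(\beta,a)=\ker\bigl(H_r(\tilde X_\beta)\to H_r(\tilde X_a)\bigr)$ lives in the homology of a typically noncompact sublevel set of the $\Gamma$-cover, which can certainly be infinite-dimensional. What (\ref{E19}) actually bounds is $\dim\mathbb T_r((a',a]\times b)\leq\dim\mathcal H_{r+1}(\tilde X_b,\tilde X_{<b})<\infty$, i.e.\ the \emph{box} quantities $\mathbb T_r((\alpha,\beta]\times a)$ — which are precisely the spaces appearing in your inverse system — together with $\mathbb T_r(<a,a)$ itself. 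So the stabilization you need does hold, but because the terms of the inverse system are finite-dimensional of uniformly bounded dimension with surjective transition maps (Observation \ref{O3.4}), not because $\mathbb T_r(\beta,a)$ is finite-dimensional.

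A further point you should be aware of, though it is a subtlety the paper itself glides over: stabilization of the quotients $\mathbb T_r((\alpha,\beta]\times a)$ shows that $\varprojlim_\alpha$ exists and equals $\mathbb T_r(\beta,a)/\bigl(\bigcap_\alpha i\mathbb T_r(\alpha,a)+\mathbb T_r(\beta,<a)\bigr)$; passing to $\varinjlim_\beta$ then yields $\mathbb T_r(<a,a)/\bigcap_\alpha W_\alpha$ where $W_\alpha=\mathrm{img}\bigl(\mathbb T_r(\alpha,a)\to\mathbb T_r(<a,a)\bigr)$. Identifying $\bigcap_\alpha W_\alpha$ with $\hat\lambda_r(a)=\mathrm{img}\bigl(\varprojlim_\alpha\mathbb T_r(\alpha,a)\to\mathbb T_r(<a,a)\bigr)$ requires knowing that the image of the inverse limit equals the intersection of the images, which is a Mittag--Leffler-type statement about the system $\{\mathbb T_r(\alpha,a)\}$ itself, not merely about the finite-dimensional quotients. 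Your argument establishes only the latter. Since $\mathbb T_r(<a,a)$ is finite-dimensional, the decreasing chain $W_\alpha$ does stabilize, which gives $\hat\lambda_r(a)\subseteq\bigcap_\alpha W_\alpha$ with both sides finite-dimensional; the reverse inclusion is where the coherence issue sits. You should either supply this step or note that it is being assumed, rather than let the incorrect finite-dimensionality claim paper over it.
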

\begin{proof}
Apply the definitions.

\end{proof}

\begin{theorem} \label {T3}\

Suppose $\dim H_r(X_a, X_{<a}) <\infty.$  Then:

\begin{enumerate}
\item $\mathbb T_r(<a,a)= \hat \lambda_r(a) \oplus \mathbb T_r((-\infty, a)\times a)=  \hat \lambda_r(a) \oplus \bigoplus _{-\infty<x <a} { \hat \gamma^f_{r} (x,a)}$
\item $  \varinjlim_{\epsilon\to 0}\coker (H_r(X_{a-\epsilon})\to H_r(X_a))= \mathbb I_a(r) / \mathbb I_{<a} (r) \oplus  \bigoplus _{x<a}  { \hat \gamma_r (a,x)}$
\item $\mathcal H_r( X_a, X_{<a})= \mathbb I_a(r) / \mathbb I_{<a} (r) \oplus  \bigoplus _{x>a}  { \hat \gamma_r (a, x)}\oplus  \hat \lambda_{r-1}(a) \oplus \bigoplus _{x<a} { \hat \gamma^f_{r-1} (x,a)}$
\end{enumerate}
\end{theorem}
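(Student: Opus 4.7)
The plan is to prove the three items in sequence. For Item 1, I start from Observation \ref{O3.6}(2), which provides $\mathbb T_r((-\infty,a)\times a) = \mathbb T_r(<a,a)/\hat\lambda_r(a)$ and hence the short exact sequence
\begin{equation*}
0 \to \hat\lambda_r(a) \to \mathbb T_r(<a,a) \to \mathbb T_r((-\infty,a)\times a) \to 0.
\end{equation*}
The hypothesis together with the bound (\ref{E19}) makes these spaces finite-dimensional, so the sequence splits over $\kappa$. Applying the direct-sum decomposition established in (\ref{E27})(b) of the proof of Proposition \ref{P3.5} (extended to $x=-\infty$ via (\ref{E25})) with $b=a$ then yields $\mathbb T_r((-\infty,a)\times a) = \bigoplus_{-\infty<x<a}\hat\gamma^f_r(x,a)$, completing Item 1.

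For Item 2, I factor the map $\alpha:H_r(\tilde X_{a-\epsilon})\to H_r(\tilde X_a)$ through $H_r(\tilde X)$ using $\beta:H_r(\tilde X_a)\to H_r(\tilde X)$, for which $\ker\beta=\mathbb T_r(a,\infty)$, $\img\beta=\mathbb I^f_a(r)$, and $\img\alpha\cap\ker\beta=\alpha(\mathbb T_r(a-\epsilon,\infty))$. A routine diagram chase produces the short exact sequence
\begin{equation*}
0 \to \mathbb T_r(a,\infty)/\alpha(\mathbb T_r(a-\epsilon,\infty)) \to \coker\alpha \to \mathbb I^f_a(r)/\mathbb I^f_{a-\epsilon}(r) \to 0.
\end{equation*}
Since direct limits of $\kappa$-vector spaces are exact, passing to $\varinjlim_{\epsilon\to 0}$ preserves exactness; the left end becomes $\mathbb T_r(a,\infty)/i\mathbb T_r(<a,\infty) = \mathbb T_r(a\times(a,\infty))$ by Observation \ref{O3.6}(1), which equation (\ref{E28})(b) of Proposition \ref{P3.5} then identifies with $\bigoplus_{x>a}\hat\gamma^f_r(a,x)$; the right end becomes $\mathbb I^f_a(r)/\mathbb I^f_{<a}(r)$. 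A finite-dimensional splitting gives the conclusion — I note that the printed statement of Item 2 appears to contain a typo, since $\hat\gamma^f_r(a,x)$ vanishes for $x<a$ and the sum should range over $x>a$, which is exactly the indexing that appears in Item 3.

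For Item 3, I pass to the direct limit in the long exact sequence (\ref{E11}). Extracting the four consecutive terms around $\mathcal H^f_r(\tilde X_a,\tilde X_{<a})$, using exactness of $\varinjlim$ together with the definitional identity $\varinjlim_{\epsilon}\ker\bigl(H_{r-1}(\tilde X_{a-\epsilon})\to H_{r-1}(\tilde X_a)\bigr)=\mathbb T_{r-1}(<a,a)$, one obtains the short exact sequence
\begin{equation*}
0 \to \varinjlim_{\epsilon\to 0}\coker\bigl(H_r(\tilde X_{a-\epsilon})\to H_r(\tilde X_a)\bigr) \to \mathcal H^f_r(\tilde X_a,\tilde X_{<a}) \to \mathbb T_{r-1}(<a,a) \to 0.
\end{equation*}
The left end is given by Item 2 and the right end by Item 1 applied in degree $r-1$, so a finite-dimensional splitting assembles the four-summand formula of Item 3. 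The main obstacle will be the careful bookkeeping of limits and verifying that the finite-dimensionality needed for the two splittings is always available — in particular, tracing through how the hypothesis $\dim H_r(\tilde X_a,\tilde X_{<a})<\infty$ propagates via (\ref{E19}) and (\ref{E11}) to control $\mathbb T_{r-1}(<a,a)$, $\hat\lambda_{r-1}(a)$, and the cokernel appearing in Item 2.
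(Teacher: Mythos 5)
Your proposal is correct and follows essentially the same route as the paper: the column of the paper's diagram (\ref{D31}) is exactly your kernel--cokernel short exact sequence (since $\mathbb C_r(a-\epsilon,a;\infty)=\mathbb T_r(a,\infty)/\alpha(\mathbb T_r(a-\epsilon,\infty))$ by definition), and Item 3 comes from the row of (\ref{D32}) together with Items 1 and 2 exactly as you describe; the only presentational difference is that you cite the already-established decompositions (\ref{E27})(b) and (\ref{E28})(b) where the paper re-runs the step-function/filtration argument in the variable $t$. You are also right that Item 2 as printed has an indexing typo and the sum should run over $x>a$, which agrees with the paper's own boxed formula $\sum_{t>a}\gamma^f_r(a,t)$ at the end of the proof.
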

\begin{proof} 
Item 1 follows from Observation \ref {O3.6} item 2 and (\ref {E27}).

\vskip .1 in
 To check items 2 ad 3 some  additional notations,  not existing in \cite{BU1}, are be of help.
\begin{equation} \label{E30}
\begin{aligned}
\mathbb T_r(a', a:t):= &\ker(i_{a',t}^{a,t}), a' < a < t\leq \infty,\\
\mathbb C_r(a', a; t):= &\coker (i_{a',t}^{a,t}),, a' < a < t\leq \infty,\\
\mathbb I_r(a',a; t): = &( \mathbb I^f_a(r)\cap \mathbb I^t_f(r)) / ( \mathbb I^f_{a'}(r)\cap \mathbb I^t_f(r)) \subseteq \mathbb I^f_a(r)/ ( \mathbb I^f_{a'}(r) \ , a' <a, \\ 
\end{aligned}
\end{equation}
where $i_{a',t}^{a,t}) :\mathbb T_r(a',t)\to \mathbb T_r(a,t)$ is the obviously induced map considered in formula (\ref{E15}).

With these notations one has  the diagram (\ref{D31}) below whose  rows and columns are exact sequences
\begin{equation}\label {D31}
\xymatrix{     &0\ar[d]                                               &                                    &                                                           &\\
&\mathbb C_r(a-\epsilon, a;\infty)\ar[d]&&
 &\\
0\ar[r]&\mathbb C_r(a-\epsilon, a)\ar[d]\ar[r]& H_r(\tilde X_a, \tilde X_{a-\epsilon}) \ar[r]& \mathbb T_{r-1}(a-\epsilon, a) \ar[r]&0\\
& \mathbb I_r(a-\epsilon, a)\ar[d] &&&\\
&0&&&}\end{equation} 
Note that we have the following   filtrations:
\begin{enumerate}
\item $\{ \mathbb I_r(a-\epsilon,a) \cdots \supseteq  \mathbb I_r(a-\epsilon, a; t) \supseteq \mathbb I_r(a-\epsilon, a; t+\epsilon') \supseteq \cdots\}$ indexed by $t\in \mathbb R$
with  the property
 
 $\boxed{ \mathbb I_r(a-\epsilon, a; t) /  \mathbb I_r(a-\epsilon, a; t+\epsilon)= \mathbb F_r((a-\epsilon, a]\times [t, t+\epsilon))},$  

\item $\{\mathbb C_r(a-\epsilon, a)\cdots \supseteq \mathbb C_r(a-\epsilon, a; t) \supseteq \mathbb C_r(a-\epsilon, a; t-\epsilon) \supseteq \cdots\}, \ t-\epsilon >a$ indexed by $t\in (a,\infty),$
which  in view of (\ref{E11}) satisfies
 
 $\boxed{\mathbb C_r(a-\epsilon, a; t) /  \mathbb C_r(a-\epsilon, a; t-\epsilon')= \mathbb T_r(a-\epsilon, a]\times (t- \epsilon, t])
 }.$ 
\end{enumerate}

Recall / observe that: 
\begin{enumerate}[label = (\roman*)]
\item $\mathcal H_r(\tilde X_a, \tilde X_{<a}) =\varinjlim_{\epsilon\to 0} H_r(\tilde X_a, \tilde X_{a-\epsilon}),$ \quad 

$ \mathbb T_r(<a,a):=\varinjlim_{\epsilon\to 0} \mathbb T_r(a-\epsilon, a),$ 

$ \mathbb C_r(<a,a):=\varinjlim_{\epsilon\to 0} \mathbb C_r(a-\epsilon, a),$ 

$\mathbb I_r(<a,a):=\varinjlim_{\epsilon\to 0} \mathbb I_r(a-\epsilon, a)= \mathbb I_r(a)(r) /\mathbb I_{<a}(r).$

\item  $\mathbb I_r( <a,a; t):=\varinjlim_{\epsilon\to 0} \mathbb I_r( a-\epsilon,a; t),$ and   

$\hat \delta ^f_r(a, t):=\varinjlim_{\epsilon \to 0} \mathbb I_r( <a, a; t)/  \mathbb I_r( <a, a; t+\epsilon)$
\item $\mathbb C_r(<a,a; t):=\varinjlim_{\epsilon\to 0} \mathbb C_r(a-\epsilon, a; t), a <t$  and 

$\hat \gamma^f_r(a, t):=\varinjlim_{\epsilon \to 0} \mathbb C_r(<a,a; t)/\mathbb C_r(<a,a; t-\epsilon ), a<t-\epsilon .$
\item $\mathbb T_r(<a,a; t):=\varinjlim_{\epsilon\to 0} \mathbb T_r(a-\epsilon, a; t), a<t .$ 
\end{enumerate}
By passing to limit  when $\epsilon \to 0$ the diagram  (\ref{D31}) becomes  

\begin{equation}\label {D32}
\xymatrix{     &0\ar[d]                                               &                                    &                                                           &\\                                 
&\mathbb C_r(<a, a;\infty)\ar[d]&&&\\
0\ar[r]&\mathbb C_r(<a, a)\ar[d]\ar[r]& \mathcal H_r(\tilde X_{a}, \tilde X_{<a}) \ar[r]& \mathbb T_r(<a, a)\ar[r]&0 .\\
&\mathbb I_r(<a,a)\ar[d] &&&\\
&0&&& .} . \end{equation} 

The exactness of the columns and rows in the diagram 

$\xymatrix { &&0\ar[d]&0\ar[d]&&\\
0\ar[r]&\mathbb T_r (a', a;t)\ar[d]\ar[r]& \mathbb T_r(a',t)\ar[r]^{i_{a',t}^{a,t}}\ar[d] &\mathbb T_r(a,t) \ar[r]\ar[d] &\mathbb C_r(a',a;t)\ar[d]\ar[r]&0\\
0\ar[r]&\mathbb T_r (a', a)\ar[r]& \mathbb H_r(\tilde X_{a'})\ar[d]\ar[r]^{i_{a'}^a} &\mathbb H_r(\tilde X_a)\ar[d] \ar[r] &\mathbb C_r(a',a)\ar[r]&0\\
&&H_r(\tilde X_t)\ar[r]^{=}&H_r(\tilde X_t)&&}$
 \vskip .1in

 for $a'<a <t$ and the commutativity of the diagram  below for for $a'<a <t <t'$

$\xymatrix { 
0\ar[r]&\mathbb T_r (a', a;t)\ar[d]\ar[r]& \mathbb T_r(a',t)\ar[r]^{i_{a',t}^{a,t}}\ar[d] &\mathbb T_r(a,t) \ar[r]\ar[d] &\mathbb C_r(a',a;t)\ar[d]\ar[r]&0\\
0\ar[r]&\mathbb T_r (a', a;t')\ar[d]\ar[r]& \mathbb T_r(a',t')\ar[r]^{i_{a',t}^{a,t}}\ar[d] &\mathbb T_r(a,t') \ar[r]\ar[d] &\mathbb C_r(a',a;t')\ar[d]\ar[r]&0\\
0\ar[r]&\mathbb T_r (a', a)\ar[r]& \mathbb H_r(\tilde X_{a'})\ar[r]^{i_{a'}^a} &\mathbb H_r(\tilde X_a) \ar[r] &\mathbb C_r(a',a)\ar[r]&0
}$

 imply the injectivity of  all arrows  $\mathbb T_r (a', a;t)\to \mathbb T_r (a', a;t')\to \mathbb T_r (a', a)$ 
 and 
 
 $\mathbb C_r (a', a;t)\to \mathbb C_r (a', a;t')\to \mathbb C_r (a', a).$ 

As already noticed, in view of the exactness of the row in (\ref{D32}), if $\dim \mathcal H_r(\tilde X_a, \tilde X_{<a})$ is finite then so are $\dim \mathbb T_r( <a,a, \infty), \dim \mathbb C_r(<a, a;\infty), \dim \mathbb I_a(r)/ \mathbb I_{<a}(r)$ and then so are 
$\dim \mathbb T_r( <a, a; t), \dim \mathbb C_r(<a, a; t)$ and $\dim \mathbb I_r(<a,a).$ Then the functions $\dim \mathbb T_r(<a, a; t)$ and  $\dim \mathbb C_r(<a,a; t)$ resp. $\dim \mathbb I_r(<a,a; t)$ are integer valued with finitely many jumps located at $t$ where $\gamma^f_r(a,t)\ne 0$ resp. $\delta^f_r(t,a)\ne 0 .$ They are also continuous from the right.
In view of (ii), (iii), (iv) above one has
\begin{enumerate} [label = (\alph*)]
\item $\mathbb I_r (<a,a)= \oplus  _{t< a}  \hat \delta_r(t,a),$
\item $\mathbb C_r (<a,a;\infty)= \oplus _{t>a}  \hat \gamma_r(t,a),$
\end{enumerate}
and combined with item 1 one has 
 $$\boxed {\dim\mathcal H_r(\tilde X_a, \tilde X_{<a})=\sum_{t<a} \delta^f_r(t,a) 
 +\sum _{t>a}\gamma^f_r(a,t) + \sum_{t<a}\gamma^f_{r-1}(t,a) +\lambda^f_{r-1}(a)},$$
which finalizes the proof of Theorem  \ref{T3}.

In case $H_r$ is the standard homology this is   Corollary 4 in \cite {BU1}.

\end{proof}

\subsection {Splittings}
\vskip .1in
 Recall from  \cite{BU1} section 4:

A splitting  
$i^{\delta}_{a,b}(r): \hat\delta^f_r(a,b)\to \mathbb F_r(a,b),$ for $(a,b)\in \supp \delta^f_r \subset CR_r(f)\times C_r(f),$
is a right inverse of the canonical projection  $\pi_{a,b}^\delta(r): \mathbb F_r(a,b)\to \hat \delta^f_r(a,b)$ i.e. $ \pi_{a,b}^\delta (r)\cdot i_{a,b}^\delta(r)= id.$  

A splitting $i^{\gamma}_{a,b}(r): \hat\gamma^f_r(a,b)\to \mathbb T_r(a,b),$ for $(a,b)\in \supp\ \gamma^f_r,$ is a right inverse of the canonical projection  $\pi_{a,b}^\gamma (r): \mathbb T_r(a,b)\to \hat \gamma^f_r(a,b).$ 

For a box $B=(a',a]\times [b,b') ,\  a' < a, b < b'$, 
resp. for a box above diagonal $B= (a', a] \times (b', b],  \ a' < a \leq b' <b$ one writes $i^B_{a,b}(r)$ for the composition of $i_{a,b}^\delta(r)$ resp. $i_{a,b}^\delta(r)$ with the projection $\mathbb F_r(a,b)\to \mathbb F_r(B)$ resp $\mathbb T_r(a,b)\to \mathbb T_r(B)$. Clearly the linear maps $i_{a,b}^B$  remain splittings of the canonical projections 
$\mathbb F_r(B)\to \hat \delta^f_r(a,b),$ resp. $\mathbb T_r(B)\to \hat \gamma^f_r(a,b).$ 

One  extends these splittings to any $(\alpha, \beta)\in B$ as being  the linear injective maps  $i^B_{\alpha, \beta}(r): \hat\delta^f_r(a,b) \to \mathbb F_r(B)$ resp. 
$i^B_{\alpha, \beta}(r): \hat \gamma^f_r(a,b) \to \mathbb T_r(B)$  
defined by the composition $i_{B'}^B(r)\cdot i_{\alpha, \beta}^{B'}(r)$ where $B'= (a' \alpha] \times [\beta, b'), \ a' <\alpha \leq a, b \leq\beta <b'$ resp. $B'= (a', \alpha] \times (b',\beta], \   a' <\alpha \leq a, \ b'< \beta \leq b.$
\vskip .1in

For $f:\tilde X\to \mathbb R$ a lift of a tame TC1-form $\omega$,  a collection $S^\delta$ resp. $S^\gamma$ of splittings $S^\delta= \{i^\delta _{a,b}(r)\}$ resp. $S^\gamma= \{i^\gamma _{a,b}(r)\}$  
is called {\it $\Gamma-$compatible}  if for any $(a,b)\in \supp \delta^f_r$ resp. $(a,b)\in \supp \gamma^f_r$ the diagrams 

\begin{equation}\label {D33}
\xymatrix 
{\mathbb F_r^f(a,b) \ar[r]^{\langle g\rangle} & \mathbb F_r(g+a, g+b) \\ 
 \tilde {\delta}^f_r(a,b)\ar[u]^{i^\delta_{a,b}(r)}\ar[r]^{\langle g\rangle} &\hat \delta_r^f(a+g, b+g)\ar[u]^{i^\delta_{a+g,b+g}(r)}}  \ \ resp. \ \ \xymatrix 
{\mathbb T_r^f(a,b) \ar[r]^{\langle g\rangle} & \mathbb T_r(g+a, g+b) \\ 
 \tilde {\gamma}^f_r(a,b)\ar[u]^{i^\gamma_{a,b}(r)}\ar[r]^{\langle g\rangle} 
  &\hat \gamma_r^f(a+g, b+g)\ar[u]^{i^\gamma_{a+g,b+g}(r)}} 
  \end{equation}
 remain commutative.
Note that collections of \ $\Gamma-$compatible splittings $S^\delta$ resp. $S^\gamma$ exist.

\noindent Indeed, if one  chooses one point $(a_i,b_i)$ in each $\Delta^\delta _{t_i}(r) \cap \supp \delta^f_r$ resp. $\Delta^\gamma _i(r) \cap \supp \gamma^f_r$ and a splitting $i^{\delta}_{a_i, b_i}(r)$ resp. $i^{\gamma}_{a_i, b_i}(r)$ and one defines $$i^{\cdots}_{a_i+g, b_i+g}(r)
= \langle g\rangle \cdot  i^{\cdots}_{a_i, b_i}(r)\cdot  \langle g\rangle^{-1}$$   then,  since  $(a_i +g, b_i+g)$ exhaust all points in the support of $\hat{\delta}^f_r$ resp. $\hat{\delta}^\gamma_r,$
one obtains  a collection of $\Gamma-$compatible splittings $S^\delta$ resp. $S^\gamma.$ 

Given a collection of splittings $S^\delta$ 
and a set $A\subset \supp\  \delta^f_r,$ as  in \cite{BU1} Proposition 4, one shows that the linear map 
$$^{S^\delta}  I_A(r)= (\oplus_{(\alpha, \beta)\in A} \  i^\delta _{\alpha, \beta}(r)) : \oplus \hat \delta^f_r(\alpha, \beta) \to H_r(\tilde X)$$  is injective, has the image contained in $\mathbb F_r(a,b)$ provided  $A \subset (-\infty, a]\times [b,\infty),$  and remains injective when composed by $\pi_{a,b}^B: \mathbb F_r(a,b)  \to \mathbb F_r(B)$ resp. $\pi_{a,b}^I: \mathbb F_r(a,b)  \to \mathbb F_r(I)$
where $B= (a',a]\times [b, b')$ resp. $I= a\times [b,b')$ or $I= a\times [b,b'),$  $a'<a, b<b'.$
Moreover, if one write $A= A(I):=\supp \delta^f_r \cap I$  the composition 
$$^{S^\delta}  I_{A(I)}(r)= \oplus_{(\alpha, \beta)\in A(I)} \  i^\delta _{\alpha, \beta}(r) : \oplus \hat \delta^f_r(\alpha, \beta) \to \mathbb F_r(I)$$ is an isomorphism.
 
 The same remains true for a collection of splittings $S^\gamma,$  $A \subset \{\alpha, \beta \in \supp \gamma^f_r \mid \alpha\leq a\}$ and the map 
$$^{S^\gamma}  I_A(r)= (\oplus_{(\alpha, \beta)\in A} \  i^\gamma _{\alpha, \beta} (r)) : \oplus \hat \gamma^f_r(\alpha, \beta) \to \mathbb T_r(a, \infty).$$ and 
$$^{S^\gamma}  I_{A(B)}(r): \oplus_{ (\alpha, \beta)\in A(B)}  \hat \gamma^f_r(\alpha, \beta) \to \mathbb T_r(B)$$ resp. $$^{S^\gamma}  I_{A(I)}(r): \oplus_{ (\alpha, \beta)\in A(I)}  \hat \gamma^f_r(\alpha, \beta) \to \mathbb T_r(B)$$  for $B= (a',a] \times (b',b]$ resp. $ I= (a',a]\times b$ or $a\times (b',b],$  $a' <a \leq b' <b.
\   $Precisely $^{S^\gamma}  I_A(r)$ is injective and $^{S^\gamma}  I_{A(I)} (r)$ with target $\mathbb T_r(I)$ is isomorphism. 

\section {Proof of Poincar\'e duality, Theorem \ref {TP} }

Suppose $a,b$ regular values for $f: \tilde M\to \mathbb R$  a lift of a locally smooth \footnote { in the neighborhood of any point there exists coordinates in which  $\omega$ is a differential closed one form whose components are given by polynomial maps} hence tame,TC1-form $\omega$ on a topological closed manifold $M,$
hence the levels $\tilde M(a), \tilde M(b)$  are codimension one submanifolds of $\widetilde M.$ 
Poincar\'e duality for the topological manifolds with boundary $\tilde M_a$ and  
$\tilde M^b$  combined with the excision property for  standard cohomology 
provide the canonical isomorphisms  " $PD_r$ " from the Borel-Moore homology $^{BM} H_r$ to the standard  cohomology  $H^{n-r}$ and then 
the  commutative diagrams below.

\begin{equation}\label{D34}
\scriptsize
\xymatrix{&H^{\rm BM}_r( \widetilde M_a)\ar[d]^{{\rm PD}^1_a}\ar[r]^{i_a(r)}  \quad &H^{\rm BM}_r( \widetilde M)\ar[d]^{\rm PD}\ar[r]^{j_a(r)} &H^{\rm BM}_r( \widetilde M, \widetilde M_a)\ar[d]^{{\rm PD}^2_a}&\\
&H^{n-r}( \widetilde M, \widetilde M^a)\ar[d] \ar[r]^{s^a(n-r)} \quad&H^{n-r}( \widetilde M)\ar[d] \ar[r]^{r^a(n-r))} &H^{n-r}( \widetilde M^a)\ar[d]&\\
&(H_{n-r}( \widetilde M, \widetilde M^a))^\ast \ar[r]^-{(j^a(n-r))^\ast} \quad&(H_{n-r}( \widetilde M))^\ast \ar[r]^{(i^a(n-r))^\ast} &(H_{n-r}( \widetilde M^a))^\ast&}
\end{equation}
\begin{equation}\label {D35}
\scriptsize
\xymatrix
{&H^{\rm BM}_r( \widetilde M^b)\ar[d]^{{\rm PD}_1^b} \ar[r]^{i^b(r)} \quad &H^{\rm BM}_r( \widetilde M)\ar[d]^{\rm PD}\ar[r]^{j^{b}(r)} &H^{\rm BM}_r( \widetilde M, \widetilde M^b)\ar[d]^{{\rm PD}_2^b}&\\
&H^{n-r}( \widetilde M, \widetilde M_b)\ar[d] \ar[r]^{s_b(n-r)}  \quad &H^{n-r}( \widetilde M)) \ar[d] \ar[r]^{r_b(n-r)} &H^{n-r}( \widetilde M_b)\ar[d]\\
&(H_{n-r}( \widetilde M, \widetilde M_b))^\ast \ar[r]^{(j_b(n-r))^\ast} \quad &(H_{n-r}( \widetilde M))^\ast \ \ar[r]^-{(i_b(n-r))^\ast} &(H_{n-r}( \widetilde M_b))^\ast &.}
\end{equation}

In view of (\ref {D34}) and  (\ref {D35})one has 
$$ \begin{aligned}  ^{BM} \mathbb F_r(a,b) :=  \img i_a(r) \cap \img i^b(r)= \ker j_a(r) \cap \ker j^b(r) \\  \ker (i^a(n-r))^\ast \cap \ker (i_b(n-r))^\ast 
 \simeq (\coker (i_b (n-r)\oplus i^a(n-r)))^\ast = :\mathbb G^f_{n-r} (b,a)^\ast. \end{aligned}$$
The first equality holds by exactness of the first row in these diagrams, the second by the equality  of the top- and bottom-right horizontal arrows, the third by the linear algebra duality and  the fourth by the definition of $\mathbb G_{n-r}.$

This in turn implies that for $a' <a, b <b'$ regular values with the boxes $B$ and $B'$ given by  $B= (a',a]\times [b,b'),$ $B'= (b, b']\times [a', a)$ the diagram (\ref {D36} is commutative with the horizontal  arrows all isomorphisms, in particular 
\begin{equation} \label {E36} 
PD_r(B) : ^{BM} \mathbb F^f_r((a' a]\times[b,b'))\to (\mathbb G^f_{n-r} ((b, b']\times[a', a))^\ast
\end{equation}
is an isomorphism.
\begin{equation} \label {D37}
\scriptsize
\xymatrix @C-0.5pc{^{\rm BM}\mathbb F^f_r(a',b')\ar[d]\ar[rr]^{{\rm PD}_r(a',b')}&&(\mathbb G^f_{n-r}(b',a'))^\ast\ar[d]&\\
^{\rm BM}\mathbb F^f_r(a,b)\ar[dd]\ar[dr]^{^{\rm BM} \pi^B_{ab,r}}\ar[rr]^{{\rm PD}_r(a,b)}&&(\mathbb G^{\widetilde f}_{n-r}(b,a))^\ast\ar[dr]^{(u_{n-r}(B'))^\ast}\ar[dd]^--{(p^{ba}(r))^\ast}&\\
&^{\rm BM}\mathbb F^f_r(B)\ar@/^1pc/[rr]^{{\rm PD}_r(B)} &&(\mathbb G^{\widetilde f}_{n-r}(B'))^\ast  \ar[d]|{^{(\theta_{n-r}(B'))^\ast}}&\\
H^{\rm BM}_r(\widetilde M)\ar[rr]^{{\rm PD}_r}&&(H_{n-r}(\widetilde M))^\ast &(\mathbb F^{\widetilde f}_{n-r}(B')^\ast}
\end{equation}
\vskip .2in

Suppose $a <b <c <d$ with all $a, b, c, d$ regular values and consider the box above diagonal $B= (a,b]\times (c,d]$. 
Poincar\'e duality and excision property provide the following commutative diagram 

\begin{equation}\label {D38}
\scriptsize 
\xymatrix@C-1.5pc{
(1)&^{BM} H_r(\tilde M^f_a)\ar[d]^{\rm PD} \ar[r]^\alpha & ^{BM}H_r(\tilde M^f_b)\ar[d]^{\rm PD}\ar[r]^\beta  &^{BM}H_r(\tilde M^f_c)\ar[d]^{\rm PD} \ar[r]^\gamma & ^{BM}H_r(\tilde M^f_d)\ar[d]^{\rm PD}\\ 
(2)&H^{n-r}(\tilde M,\tilde M^a_f) \ar[r]& H^{n-r}(\tilde M,\tilde M^b_f)\ar[r] &H^{n-r}\tilde M,\tilde M^c_f) \ar[r]& H_r^{n-r}(\tilde M,\tilde M^d_f)\\
(3)&H^{n-r-1}(\tilde M_f^a) \ar[r]\ar[u]^{\partial}& H^{n-r-1}(\tilde M_f^b)\ar[r]\ar[u]^{\partial} &H^{n-r-1}(\tilde M_f^c) \ar[r]\ar[u]^{\partial}& H^{n-r-1}(\tilde M_f^d)\ar[u]^{\partial}\\
(4)&(H_{n-r-1}(\tilde M_f^a))^\ast \ar[r]\ar[u]^{=}& (H_{n-r-1}\tilde M_f^b))^\ast\ar[r]\ar[u]^{=} &(H_{n-r-1}(\tilde M_f^c))^\ast \ar[r]\ar[u]^{=}& (H_{n-r-1}(\tilde M_f^d)^\ast\ar[u]^{=}\\
(5)&(H_{n-r-1}(\tilde M^{-f}_{-a}))^\ast \ar[r]^{{\gamma'}^\ast}\ar[u]& (H_{n-r-1}(\tilde M^{-f}_{-b}))^\ast\ar[r]^{{\beta'}^\ast}\ar[u] &(H_{n-r-1}(\tilde M^{-f}_{-c}))^\ast \ar[r]^{{\alpha'}^\ast}\ar[u]& (H_{n-r-1}(\tilde M^{-f}_{-d}))^\ast .\ar[u] }
\end{equation}  

In view of Theorem \ref {T2.3} item 2 this diagram  implies that $\omega (\alpha, \beta, \gamma)= \omega({\gamma'}^\ast,{\beta'}^\ast, {\alpha'}^\ast)$ (cf subsection 2.1 for definitions) 
which by Theorem \ref {T2.3} 
item 1 is canonically isomorphic to 
$\omega(\gamma', \beta', \alpha')^\ast.$ In particular one has the  isomorphism 
\begin{equation} \label {E39}
^{BM}PD_r(B) : ^{BM} \mathbb T^f_r (a,b]\times (c,d])\to  (\mathbb T^{-f}_{n-r-1} ((-d, -c]\times (-b,-a]))^\ast.
\end{equation}
\vskip .2in

For $a,b\in \mathbb R$ and $\epsilon_1 >\epsilon_2$  one considers  two sub-surjections $$ 
^1\tilde \pi  : A=^{BM} \mathbb F_r (B_1)\rightsquigarrow ^{BM}\mathbb F_r(B_2)= A'$$ and $$
^2\tilde \pi : A=( \mathbb G_{n-r} (\underline B_1))^\ast \rightsquigarrow \mathbb (G_{n-r}(\underline B_2))^\ast=A'$$ defined based on the boxes 
\vskip .1in

\begin{enumerate}[label= (\alph*)]
\item 
$B_1= (a-\epsilon_1, a+\epsilon _1]\times [b-\epsilon_1, b+\epsilon_1)$  

$B_2= (a-\epsilon_2, a+\epsilon _2]\times [b-\epsilon_2, b+\epsilon_2)$ 

$C= (a-\epsilon_2, a+\epsilon _1]\times [b-\epsilon_1, b+\epsilon_2)$ 

\noindent with $C$ as a lower-right corner of $B_1,$  $B_2$ as an upper-left corner of $C$ 

and on the boxes 

\item 
$\underline B_1= (b-\epsilon_1, b+\epsilon _1]\times [a-\epsilon_1, a+\epsilon_1)$  

$\underline B_2= (b-\epsilon_2, b+\epsilon _2]\times [a-\epsilon_2, a+\epsilon_2)$ 

$\underline C= (b-\epsilon_1, b+\epsilon _2]\times [a-\epsilon_2, a+\epsilon_1)$ 

\noindent with $\underline C$ as a upper left corner of $\underline B_1,$  $\underline B_2$ as an down right corner of $\underline C.$
\end{enumerate}

The boxes $\underline B_1, \underline C, \underline B_2$ are the symmetric of the boxes $B_1, C, B_2$ w.r. to the first diagonal $\Delta_1:=\{(x,y)\in \mathbb R^2 \mid x-y=0\}.$  See 
Figure 3 below.
\vskip .1in

\begin{tikzpicture} [scale=0.7]

\draw [line width=0.05cm] (-3,0) -- (10,0);
\draw [line width=0.05cm] (0,10) -- (0,-2);
\draw [dashed, line width=0.05cm] (0,11) -- (0,10);
\draw [dashed, line width=0.05cm] (0,-2) -- (0,-3);
\draw [dashed, line width=0.05cm] (-4,0) -- (-3,0);
\draw [dashed, line width=0.05cm]  (10,0) -- (11,0);
\draw [<-, line width=0.10cm] (-2,5) -- (6,5);
\draw [<-, line width=0.10cm] (6,9) -- (6,5);
\draw [dashed, line width=0.05cm] (-2,5) -- (-2,9);
\draw [dashed, line width=0.05cm] (-2,9) -- (6,9);
\node at (-1,8) {$B_1$};

\draw [<-, line width=0.10cm] (1,6) -- (4,6);
\draw [<-, line width=0.10cm] (4,8) -- (4,6);
\draw [dashed, line width=0.05cm] (1,5) -- (1,8);
\draw [dashed, line width=0.05cm] (1,8) -- (6,8);
\node at (-1,8) {$B_1$};
\node at (1.5,6.5) {$B_2$};
\node at (5.5,7) {$C$};
\node at (7,2.5) {$*  (b,a)$};

\draw [dashed, line width=0.05cm] (5,-2) -- (5,6);
\draw [dashed, line width=0.05cm] (9,6) -- (5,6);
\draw [<-, line width=0.10cm] (5,-2) -- (9,-2);
\draw [<-, line width=0.10cm] (9,6) -- (9,-2);
\node at (-1,8) {$B_1$};

\draw [dashed, line width=0.05cm] (6,1) -- (6,4);
\draw [dashed, line width=0.05cm] (8,4) -- (6,4);
\draw [dashed, line width=0.05cm] (5,1) -- (8,1);
\draw [dashed, line width=0.05cm] (8,1) -- (8,6);
\draw [dashed, line width=0.05cm] (5,1) -- (6,1);
\draw [<-, line width=0.10cm] (6,1) -- (8,1);
\draw [<-, line width=0.10cm] (8,4) -- (8,1);
\draw [line width=0.02cm] (-1.5,-1.5) -- (9,9);
\draw [dashed, line width=0.03cm] (-1.5,-1.5) -- (-2,-2);
\draw [dashed, line width=0.03cm] (9.5,9.5) -- (9,9);

\node at (8,-1) {$\underline B_1$};
\node at (7,1.5) {$\underline B_2$};
\node at (7,5.5) {$\underline C$};
\node at (2.5,7) {$*  (a,b)$};
\node at (2,-1) {Figure 3};
\node at (8.5,9) {$\Delta_1$};
\end{tikzpicture}
\vskip .1in 
\noindent In view of Observation \ref{O3.2} \  

 $\pi_{B_1}^C: ^{BM} \mathbb F_r(B_1)\to ^{BM}\mathbb F_r(C)$ is surjective  and 
 
 $i_{B_2} ^C: ^{BM} \mathbb F_r(B_2)\to ^{BM}\mathbb F_r(C)$ is injective and identifies  $^{BM}\mathbb F_r(B_2)$ to a subspace of $^{BM}\mathbb F_r(C).$ 
 
 Define the sub-surjection $^1\tilde \pi$ by:
$A:=^{BM} \mathbb F_r (B_1), P=^{BM}\mathbb F_r(C) , A' = ^{BM}\mathbb F_r(B_2)$ and $\pi := \pi_{B_1}^C.$  

 \noindent In view of Observation \ref{O3.2}  

$(i_{\underline  C}^{\underline B_1})^\ast : (\mathbb G_{n-r}(\underline B_1))^\ast \to (\mathbb  G_{n-r}(\underline C))^\ast$ is surjective 
and  

$(\pi_{\underline  C}^{\underline B_2})^\ast : (\mathbb G_{n-r}(\underline B_2))^\ast \to (\mathbb  G_{n-r}(\underline C))^\ast$ is injective 
\footnote { since $ i_{\underline  C}^{\underline B_1}: \mathbb G(\underline C) \to \mathbb  G(\underline B_1)$ is injective and 
$\pi_{\underline  C}^{\underline B_2} : \mathbb G(\underline C) \to \mathbb  G(\underline B_2)$ is surjective}.  

Define the second  sub-surjection $^2\tilde \pi$ by:
$A:=(\mathbb G_{n-r} (\underline B_1))^\ast , P=(\mathbb G_{n-r}(C))^\ast , A' = (\mathbb G_{n-r}(B_2))^\ast $ and $\pi := (i_{\underline  C}^{\underline B_1})^\ast .$  
 In view of (\ref{E36}) Poincar\'e duality identifies the two sub-surjections.  
\vskip .1in

For $a<b,  a,b\in \mathbb R$ and $\epsilon_1 >\epsilon_2$ with $a<2\epsilon _1 < b$ one considers  two sub-surjections 
$ 
^3 \tilde \pi   : A=^{BM} \mathbb T_r ^f (B_1)\rightsquigarrow ^{BMT} \mathbb T^f(B_2)$ 
and $ ^4 \tilde \pi : (\mathbb T^{-f}_{n-r-1}(\underline B_1))^\ast \rightsquigarrow (\mathbb T^{-f}_{n-r-1}(\underline B_2))^\ast$  based on the boxes above diagonal.
\vskip .1in

\begin{enumerate}[label= (\alph*)]
\item
$B_1= (a-\epsilon_1, a+\epsilon _1]\times (b-\epsilon_1, b+\epsilon_1]$  

$B_2= (a-\epsilon_2, a+\epsilon _2]\times (b-\epsilon_2, b+\epsilon_2]$ 

$C= (a-\epsilon_2, a+\epsilon _1]\times [b-\epsilon_2, b+\epsilon_1]$ 

\noindent with $C$ as a upper-right corner of $B_1$  and $B_2$ as the lower-left   corner of $C$ and on the boxes above diagonal

\item 
$\underline B_1= (-b-\epsilon_1, -b+\epsilon_1]\times (-a-\epsilon_1, -a+\epsilon _1]$  

$\underline B_2=  (-b-\epsilon_2, -b+\epsilon_2)\times (-a-\epsilon_2, -a+\epsilon _2]$ 

$\underline C= (-b-\epsilon_1, -b+\epsilon_2)\times (-a-\epsilon_1, -a+\epsilon _2]$ 

\noindent with $\underline C$ as a lower left corner of $\underline B_1,$  $\underline B_2$ as an upper-right corner of $\underline C$
\end{enumerate}
The boxes $\underline B_1, \underline C, \underline B_2$ are the symmetric  of the boxes $B_1 C, B_2$ w.r. to the second diagonal 
$\Delta_2:=\{(x,y)\in \mathbb R^2 \mid x+y=0\}.$ See 
Picture 4 below.
\vskip .2in
\begin{tikzpicture} [scale=0.6]

\draw [line width=0.05cm] (-11,0) -- (9,0);
\draw [dashed, line width=0.05cm] (-11,0) -- (-12,0);
\draw [dashed, line width=0.05cm]  (9,0) -- (10,0);
\draw [<-, line width=0.10cm] (-10,5) -- (-6,5);
\draw [<-, line width=0.10cm] (-6,-3) -- (-6,5);
\draw [<-, line width=0.10cm] (-9,3) -- (-7,3);
\draw [<-, line width=0.10cm] (-10,3) -- (-9,3);
\draw [<-, line width=0.10cm] (-7,-1) -- (-7,3);
\draw [<-, line width=0.10cm] (-7,-3) -- (-7,-1);
\draw [dashed, line width=0.05cm]  (-6,-3) -- (-10,-3);
\draw [dashed, line width=0.05cm]  (-10,-3) -- (-10,5);
\draw [dashed, line width=0.05cm]  (-9,-1) -- (-7,-1);
\draw [dashed, line width=0.05cm]   (-10,-3) -- (-6,-3);
\draw [dashed, line width=0.05cm]  (-9,3) -- (-9,-1);
\node at (-8,4.5) {$\underline B_1$};
\node at (-8.5,2) {$\underline B_2$};
\node at (-8,1) {$ * $};
\node at (-8.1, 0.5){$(-b,-a)$};
\node at (-8.5,9) {$\Delta_2$};

\draw [line width=0.02cm] (-9,9) -- (4,-4);
\draw [line width=0.02cm] (9,9) -- (-4,-4);
\draw [dashed, line width=0.05cm] (-5,10) -- (-5,6);
\draw [dashed, line width=0.05cm] (3,6) -- (-5,6);
\draw [dashed, line width=0.05cm] (-3,9) -- (-3,7);
\draw [dashed, line width=0.05cm] (-3,10) -- (-3,9);
\draw [dashed, line width=0.05cm] (1,7) -- (-3,7);
\draw [dashed, line width=0.05cm] (-3,7) -- (1,7);
\draw [dashed, line width=0.05cm]  (3,6) -- (3,10);
\draw [dashed, line width=0.05cm] (1,7) -- (3,7);
\draw [<-, line width=0.10cm]  (-5,10) -- (3,10);
\draw [<-, line width=0.10cm]  (1,7) -- (1,9);
\draw [<-, line width=0.10cm]   (3,6) -- (3,10);
\draw [<-, line width=0.10cm]  (-3,9) -- (1,9);
\node at (-8,4.5) {$\underline B_1$};
\node at (-8.5,2) {$\underline B_2$};
\node at (-1,8) {$*  (a,b)$};
\node at (-0.3,-3.5) {Figure 4};
\node at (-8.5,9) {$\Delta_2$};
\node at (8.5,9) {$\Delta_1$};
\node at (-4.5,6.5) {$B_1$};
\node at (-2.5,7.5) {$B_2$};
\node at (-7.5,-2.5) {$\underline C$};
\node at (2.5,8.5) {$C$};

\end{tikzpicture}
\vskip .2in

\noindent In view of Observation \ref{O3.4}   

$\pi_{B_1}^C: ^{BM} \mathbb T_{r}^f(B_1)\to ^{BM}\mathbb T_{r}^f(C)$ is  surjective  and 

$i_{B_2} ^C: ^{BM} \mathbb T_{r}^f(B_2)\to ^{BM}\mathbb T_{r}^f(C)$ is injective identifying $^{BM}\mathbb T_r^f(B_2)$ to  a subspace of $^{BM}\mathbb T_r^f(C).$ 

Define the first  sub-surjection $^3 \tilde \pi$ with $A:=^{BM} \mathbb T^f (B_1), P=^{BM}\mathbb T^f(C) , A' = ^{BM}\mathbb T^f(B_2)$ and $\pi := \pi_{B_1}^C.$  

\noindent In view of Observation \ref{O3.4} 

$(i_{\underline  C}^{\underline B_1})^\ast : (\mathbb T_{n-r-1}^{-f}(\underline B_1))^\ast \to (\mathbb  T_{n-r-1}^{-f}(\underline C))^\ast$ is surjective 
 \footnote {since $\pi_{\underline  C}^{\underline B_2} : \mathbb T^{-f}(\underline C)\to \mathbb  T^{-f}(\underline B_2)$ is surjective  and  $i_{\underline  C}^{\underline B_1} : \mathbb  T^{-f}(\underline C \to \mathbb T^{-f}(\underline B_1)) $ injective},

$(\pi^{\underline  C}_{\underline B_2})^\ast  : (\mathbb  T_{n-r-1}^{-f}(\underline B_2))^\ast \to (\mathbb T_{n-r-1}^{-f}(\underline C)^\ast $ is injective.

Define the  sub-surjection $^4\tilde \pi$ with $A:=(\mathbb T_{n-r-1}^{-f} (\underline B_1)^\ast,$   $P= (\mathbb T_{n-1-r}(\underline C))^\ast$ 
$A' = (\mathbb T_{n-r-1}^{-f}(\underline B_2)^\ast$ and $\pi= :(\pi^{\underline  C}_{\underline B_1})^\ast .$  

In view of (\ref{E39}) Poincar\'e duality identifies the two quasi surjections.  
\vskip .2in

Given $a,b\in CR(f)$  choose a sequence $\epsilon_1 > \epsilon_2 >\epsilon _3>\cdots \epsilon _k> \cdots >0$ with $\lim_{i\to \infty} \epsilon_i=0$ s.t. $a\pm \epsilon_i$ and $b\pm \epsilon_i$ are regular values for any $i.$ This is possible since the set of regular values of $f$ is everywhere dense in $\mathbb R.$ In case $a <b$ assume in addition that $a <2\epsilon_1 < b.$
  
First  consider the collection of sub-surjections $^{BMF} \tilde \pi_i : A_i\rightsquigarrow A_{i+1}$ with $A_i:= ^{BM} \mathbb F_r(B_i),$ $B_i$ the box $ B_i:= (a-\epsilon_i, a+\epsilon_i ]\times [b-\epsilon _i, b+\epsilon _i)$ which identifies  by Poincar\'e duality    to the collection of sub-surjections $^{G^\ast} \tilde \pi_i : A_i\rightsquigarrow A_{i+1}$ with $A_i:= (\mathbb G_{n-r}(\underline B_i))^\ast$  with $\underline B_i$  with the box $\underline B_i=  (b-\epsilon _i, b+\epsilon _i]\times [a-\epsilon_i, a+\epsilon_i ).$  
Since in view of (\ref{E36})
$\varinjlim_{i\to \infty}   {^1 \tilde \pi_i }    = \varinjlim_{i\to\infty}  {^2 \tilde \pi_i},$   Proposition \ref{P32} below implies $$^{BM} \hat \delta^f_r(a,b)= ( ^G \hat \delta ^f_{n-r}(b,a))^\ast.$$ 
In view of the finite dimensionality of $\hat \delta^f_{n-r}(b,a)= ^F \hat \delta^f_{n-r}(b,a)= ^G \hat \delta^f_{n-r}(b,a)$
 one has $\dim \   (^G \hat \delta^f_{n-r}(b,a))^\ast = \dim  \ ^G \hat \delta^f_{n-r}(b,a),$
hence $^{BM} \delta^f_r(a,b)= \delta^f_{n-r}(b,a)$ hence $$^{BM} \delta^\omega_r(t)= \delta^\omega_{n-r} (-t).$$ This establishes Item 1 in Theorem (\ref{TP}).   
\vskip .2 in

Similarly, given $a<b$  consider the collection of sub-surjections $^3 \tilde \pi^f_i : A_i\rightsquigarrow A_{i+1}$ with 
$A_i:= ^{BM} \mathbb T^f_r(B_i),$ $B_i$ the box above diagonal $ B_i:= (a-\epsilon_i, a+\epsilon_i ]\times (b-\epsilon _i, b+\epsilon _i]$ which identifies  by Poincar\'e duality   to the collection of sub-surjections $ ^4 \pi^{-f} _i : A_i\rightsquigarrow A_{i+1}$ with $A_i:= (\mathbb T^{-f}_{n-r-1}(\underline B_i))^\ast$  with $\underline B_i$   the box above diagonal $\underline B_i= 
(-b-\epsilon _i, -b+\epsilon _i]\times (-a-\epsilon_i, a+\epsilon_i ].$  

In view of (\ref{E39}) one has 
$\varinjlim_{i\to \infty} {^3 \tilde \pi^f_i }    = \varinjlim_{i\to\infty}  {^4 \tilde \pi^{-f}_i}.$  

Proposition \ref{P32} below implies $$^{BM} \hat \gamma ^f_r(a,b)= ( \hat \gamma ^{-f}_{n-r-1}(-b, -a))^\ast$$ 
and 
in view of the finite dimensionality of $\hat \gamma^{-f}_{n-r-1}(-b,-a)$ one has 

\noindent $\dim  \hat \gamma^{-f}_{n-r-1}(-b,-a)= (\dim  \hat \gamma^{-f}_{n-r-1}(-b,-a))^\ast$
hence $^{BM} \gamma^f_r(a,b)= \gamma^{-f}_{n-r-1}(-b,-a)$ hence $$^{BM} \gamma^\omega_r(t)= \gamma ^{-\omega}_{n-r-1} (t).$$ This establishes Item 2 in Theorem  \ref{TP}.

\begin {proposition} \label {P32}\
\begin{enumerate}
\item $\varinjlim_{i\to \infty}   {^1 \tilde \pi_i}= ^{BM} \hat \delta^f_r(a,b) $ 
\item $\varinjlim_{i\to \infty} {^2\tilde \pi_i }= (\hat \delta^f_{n-r} (b,a))^\ast$
\item $\varinjlim_{i\to \infty} {^3 \tilde \pi^f_i }= ^{BM} \hat \gamma ^f_{r}(a,b) $ 
\item $\varinjlim_{i\to \infty} {^4 \tilde \pi^{-f}_i}= ( \hat \gamma ^{-f}_{n-r-1}(-b, -a))^\ast $ 
\end{enumerate}
\end{proposition}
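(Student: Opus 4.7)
The plan is to prove all four items by analyzing how the stabilization procedure of (\ref{E9}) interacts with the natural decomposition of $^{BM}\mathbb F_r$ and $^{BM}\mathbb T_r$ into contributions from support points of $^{BM}\delta^f_r$ and $^{BM}\gamma^f_r$. Items 2 and 4 will follow from items 1 and 3 by dualization through the Poincar\'e duality identifications (\ref{E36}) and (\ref{E39}), together with finite-dimensionality of the relevant non-BM limits.

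For item 1, I would first use the $\Gamma$-compatible splittings constructed at the end of Section \ref{S3} and iterated application of Proposition \ref{P3.1} to obtain a direct-sum decomposition
$$^{BM}\mathbb F_r(B_i)\;\simeq\;\bigoplus_{(\alpha,\beta)\in B_i\cap \text{supp}\,{^{BM}}\delta^f_r}\;{^{BM}}\hat\delta^f_r(\alpha,\beta),$$
functorial in box refinements (and possibly involving infinitely many summands if $\Gamma$ is dense in $\mathbb R$). Under this decomposition, the surjection $\pi^{C_i}_{B_i}$ (Observation \ref{O3.2}) annihilates exactly the summands indexed by support points in $B_i\setminus C_i$, and the injection $^{BM}\mathbb F_r(B_{i+1})\hookrightarrow {^{BM}}\mathbb F_r(C_i)$ identifies its image with the subsum over points of $B_{i+1}$.

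Consequently, the inductively defined pullback $A_i^{(k)}\subseteq {^{BM}}\mathbb F_r(B_i)$ from (\ref{D10}) coincides with the subsum indexed by support points lying in $B_k$. Intersecting over $k\geq i$ and using $\bigcap_k B_k=\{(a,b)\}$ (since $\epsilon_k\to 0$), one finds $A_i^\infty\simeq {^{BM}}\hat\delta^f_r(a,b)$, with the transition maps $\pi_i^\infty$ acting as identities on this common summand; hence $\varinjlim_{i} {^1}\tilde\pi_i={^{BM}}\hat\delta^f_r(a,b)$. Item 3 is entirely parallel, with Proposition \ref{P3.3} and Observation \ref{O3.4} replacing Proposition \ref{P3.1} and Observation \ref{O3.2}, the $\gamma$-splittings replacing the $\delta$-splittings, and boxes above diagonal replacing the centered boxes of item 1.

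For items 2 and 4, I would invoke the Poincar\'e duality identifications (\ref{E36}) and (\ref{E39}), which realize the sub-surjection systems $^2\tilde\pi_i$ and $^4\tilde\pi_i^{-f}$ as the linear duals of the sub-surjection systems for $\hat\delta^f_{n-r}(b,a)$ and $\hat\gamma^{-f}_{n-r-1}(-b,-a)$ (which are handled by items 1, 3 applied to standard homology rather than Borel-Moore homology); since these non-BM limits are finite-dimensional by the configuration analysis in the proof of Proposition \ref{P3.5}, taking the direct limit commutes with dualization, yielding the claimed $(\hat\delta^f_{n-r}(b,a))^\ast$ and $(\hat\gamma^{-f}_{n-r-1}(-b,-a))^\ast$. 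The main technical obstacle I anticipate is establishing the direct-sum decomposition for $^{BM}\mathbb F_r(B)$ and $^{BM}\mathbb T_r(B)$ rigorously — the splittings of \cite{BU1} and Section \ref{S3} are stated for standard homology, so one must verify that the construction (based on the exact sequences of Propositions \ref{P3.1} and \ref{P3.3} plus $\Gamma$-equivariance) carries over verbatim to Borel-Moore homology and is functorial in box refinements; once that is in hand, the identification $A_i^\infty\simeq {^{BM}}\hat\delta^f_r(a,b)$ is immediate from $\bigcap_k B_k=\{(a,b)\}$.
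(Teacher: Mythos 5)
Your overall architecture is sound in that it invokes the same machinery the paper uses: the box exact sequences of Propositions \ref{P3.1} and \ref{P3.3}, the stabilization construction of diagram (\ref{D10}), and the Poincar\'e duality identifications (\ref{E36}), (\ref{E39}) for items 2 and 4. However, there is a genuine gap at the decisive step where you identify the stable subspaces $A_i^\infty$, and it is not repaired elsewhere in the proposal.

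You claim that the pullback $A_i^{(k)}$ from (\ref{D10}) coincides with the subsum indexed by support points lying in $B_k$, and therefore that $A_i^\infty\simeq{^{BM}}\hat\delta^f_r(a,b)$ already for each fixed $i$, with the transition maps acting as identities. This is not what the inductive construction produces. The recursion $A_i^{k+1}=(\pi_i^k)^{-1}(A_{i+1}^{k+1})$ takes preimages, and a preimage always contains the full kernel of the projection being inverted. The kernel of $\pi_{B_i}^{C_i}$ corresponds to the summands over $B_i\setminus C_i$, which is not contained in $B_{i+1}$; tracking the recursion, one finds that $A_i^k$ corresponds to the index set $(B_i\setminus C_i)\cup\cdots\cup(B_{k-1}\setminus C_{k-1})\cup B_k$, which is strictly larger than $B_k$, and the intersection over $k$ is strictly larger than $\{(a,b)\}$ and still depends on $i$. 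The paper's own computation, done directly from Observations \ref{O3.2} and \ref{O3.4} without any direct-sum decomposition, identifies $A_i^\infty$ with ${^{BM}}\mathbb F_r((a-\epsilon_i,a]\times[b,b+\epsilon_i))$ — the $\mathbb F_r$ of a corner box that still shrinks as $i\to\infty$ — and only the direct limit over $i$ of these corner boxes collapses, by the defining formula for $\hat\delta$, to ${^{BM}}\hat\delta^f_r(a,b)$. In short, the transition maps $\pi_i^\infty$ are genuine surjections with nontrivial kernels, not identities. The final number in item 1 happens to agree with yours, but since Proposition \ref{P32} is precisely about computing $\varinjlim_i{^1}\tilde\pi_i:=\varinjlim_i(\pi_i^\infty\colon A_i^\infty\to A_{i+1}^\infty)$, misidentifying $A_i^\infty$ and the transition maps is a real defect, and it propagates to your treatment of items 2, 3, 4, which are all modeled on the same pattern. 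Your side observation that the splitting machinery of Section \ref{S3} carries over verbatim to Borel-Moore homology is correct — that section is written for any homology theory satisfying the stated finiteness hypotheses — but the paper's argument does not actually need it: it works at the level of the box exact sequences and avoids any appeal to splittings or to a direct-sum decomposition of $\mathbb F_r(B)$ over support points.
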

Proof: 

Using the description of $A^\infty_i$ (at the end of section 1)  Observations \ref{O3.2} and Observation \ref{O3.4}   one concludes  that for each directed system $^1 \tilde \pi_i,$
$^2 \tilde \pi_i,$  $^3 \tilde \pi_i,$$^4 \tilde \pi_i$ the corresponding $A^\infty _i$ are given by 
$$A^\infty _i=  \begin{cases}
^{BM} \mathbb F_r( (a-\epsilon_i, a]\times [ b, b+\epsilon_i)\\
( \mathbb G_{n-r} ((-b-\epsilon_i, b]\times [a. a+\epsilon_i)))^\ast\\
^{BM} \mathbb T^t_r((a-\epsilon_i, a]\times (b-\epsilon_i, b])\\
(\mathbb T_{n-r-1}^{-f}((-b-\epsilon_i, -b]\times (-a-\epsilon_i, -a]))^\ast .
\end{cases}
$$ 
 
In view of the Definitions \ref{D31} and of the definitions of $^{BM}\hat \delta_{\cdots}  ^f,  \  ^{G}\hat \delta_{\cdots} ^f, \  ^{BM}\gamma^f_{\cdots }$ one has

$$\varinjlim_{i\to \infty}   (\pi'_i:A^\infty_i \to A^{\infty}_{i+1})= \begin{cases} ^{BM} \hat\delta^f_r(a,b)\\
(^G \hat \delta^f_{n-r} (b,a))^\ast\\  ^{BM} \hat\gamma^f_r(a,b) \\ (\hat\gamma^{-f}_r(-b,-a))\ast
\end{cases} $$

q.e.d

\section {Proof of stability  property, Theorem \ref{TS} }

In view of Proposition {\ref {P3.5}} Note that for each diagonal $\Delta_t:=\{(x,y)\in \mathbb R^2, y-x=t\}$ the set $\Delta_t \cap \supp \delta^f_r$ if not empty is $\Gamma-$invariant w.r.  to the diagonal action and the set 
$(\Delta_t \cap \supp \delta^f_r) / \Gamma$ consists of finitely many orbits say $o_1, o_2, \cdots o_k.$ Choose in any such orbit one point $(a_i, b_i),$ and let 
 $ \delta^\omega_r(t)= \sum_{i=1,2,\cdots k} \delta^f_r(a_i, b_i).$
 Clearly $\delta^\omega_r(t)$ does not depend on the choice of $(a_i, b_i).$ Recall that $\hat\delta^f_r(a,b)\ne 0$ implies  that both $a$ and $b$ are critical values 

Denote by 

\begin{enumerate}

\item $\check{ \mathbb F}^f_r(\{a,b\}):= \oplus _{g\in\Gamma}  \hat \delta^f_r(a+g,b+g),$

\item $\check{ \mathbb F}^\omega_r(t):= \oplus _{\{(a,b)\in \mathbb R^2, b-a = t\}}  \hat \delta^f_r(a,b)= \oplus _{\{(a,b)\in \supp\ \delta^f_r, b-a = t\}}  \hat \delta^f_r(a,b)= \oplus _{i\mid (a_i, b_i)\in \Delta^\delta_t} \check {\mathbb F}^f_r(\{a_i,b_i\}),$

\item $\check{ \mathbb F}^\omega_r(\leq t):= \oplus _{\{(a,b)\in \mathbb R^2,  b-a\leq t\}}  \hat \delta^f_r(a,b)= \oplus _{\{(a,b)\in \supp\ \delta^f_r,  b-a\leq t\}}  \hat \delta^f_r(a,b)=\oplus_{s\leq t} \check {\mathbb F}^\omega_r(s),$

\item $\check{ \mathbb F}^\omega_r(<t):=\oplus _{\{(a,b)\in \mathbb R^2,  b-a < t\}}  \hat \delta^f_r(a,b)=\oplus _{\{(a,b)\in \supp\ \delta^f_r,  b-a < t\}}  \hat \delta^f_r(a,b)= \oplus_{s\leq t} \check {\mathbb F}^\omega_r(s),$

\item 
$\check{ \mathbb F}^\omega_r:=\oplus _{\{(a,b)\in \mathbb R^2\}}  \hat \delta^f_r(a,b)=  \oplus _{\{(a,b)\in \supp\ \delta^f_r\}}  \hat \delta^f_r(a,b)= \oplus_{s\in \mathbb R} \check {\mathbb F}^\omega_r(s).$

\end{enumerate}

\noindent Let $\langle g\rangle_{a,b}: \hat \delta^f_r(a,b)\to \hat \delta^f_r(a+g,b+g)$ be the linear isomorphism derived from the linear isomorphism $\langle g\rangle: H_r(\tilde X)\to H_r(\tilde X)$ 
end equip $\check{ \mathbb F}^\omega_r$ with a structure of $\kappa[\Gamma]-$ module  given by 
 $$\langle g \rangle:= \oplus _{(a,b)} \langle g\rangle_{a,b} \hat \delta^f_r(a,b)\to \hat \delta^f_r(a+g, b+g).$$
The $\kappa[\Gamma]-$modules 
$\check{ \mathbb F}^f_r(\{a,b\}),$\  $\check{ \mathbb F}^\omega_r(t),$ \ $\check{ \mathbb F}^\omega_r(\leq t),$\ $\check{ \mathbb F}^\omega_r(< t)$ and $\check{ \mathbb F}^\omega_r,$ \ 
are all free f.g. $\kappa [\Gamma]-$ modules and  in view of Proposition \ref{P3.5}  of finite rank 

\begin{enumerate}

\item $ \delta^f_r(a,b),$    

 \item $ \delta^\omega_r(t)= \sum_{i\mid , b_i-a_i \in \Delta^\delta_t } \delta^f_r(a_i, b_i),$  
  
\item  $\sum_{ s\in \supp \ \delta_r^\omega, s\leq t} \delta^\omega _r(s),$

 \item $\sum_{ s\in \supp \ \delta_r^\omega, s< t} \delta^\omega _r(s),$

 \item $\sum_{s\in \supp \ \delta_r^\omega} \delta^\omega _r (s).$\ 
\end{enumerate}

\noindent Note that $Tor H_r(\tilde X)= \cap_t \mathbb I^f_tr)= \cap_t \mathbb I_f^t(r)$ which implies that $\mathbb F_r(a,b) \supseteq  Tor H_r(H_r(\tilde X)) \subset H_r(\tilde X) / Tor (H_r(\tilde X))$ 

Denote by 

$\overline { \mathbb F}^\omega_r:= H_r(\tilde X)/ Tor H_r(\tilde X),$

$\overline { \mathbb F}^f_r(\{a,b\}):= \sum _{g\in\Gamma}   (F^f_r(a+g,b+g)/ Tor H_r(\tilde X))  \subseteq \overline {\mathbb F}^\omega_r,$

$\overline { \mathbb F}^\omega_r(t):= \sum _{\{(a,b)\in \supp\ \delta^f_r, b-a = t\}}  (F^f_r(a,b)/ Tor H_r(\tilde X)) \subseteq \overline {\mathbb F}^\omega_r,$ 

$\overline { \mathbb F}^\omega_r(\leq t):= \sum _{(a,b)\in \supp\ \delta^f_r,  b-a\leq t}  (F^f_r(a,b)/ Tor H_r(\tilde X)) \subseteq \overline {\mathbb F}^\omega_r,$

$\overline { \mathbb F}^\omega_r(<t):= \sum _{\{(a,b)\in \supp \ \delta^f_r,  b-a < t\}} (F^f_r(a,b)/ Tor H_r(\tilde X)) \subseteq \overline {\mathbb F}^\omega_r.$
\vskip .1in

\noindent Since $\kappa[\Gamma]$ is Noetherian and $H_r(\tilde X)$ is f.g.module all these $\kappa-$vector spaces are  f.g $\kappa[\Gamma]-$ modules. 
Consequently there are finitely many $t\in \mathbb R$, \ $t_1 <t_2 \cdots  <t_k,$ $t_i\in \supp \delta^\omega_r$ s.t 
\begin{enumerate}
\item $\overline{\mathbb F}^\omega_r(\leq t)= 0$ for $t< t_1,$
\item $\overline{\mathbb F}^\omega_r(\leq t)= \mathbb F^\omega_r(\leq t_i)$ for $t_i\leq t <t_{i+1},$
\item $\overline{\mathbb F}^\omega_r(\leq t)= H_r(\tilde X)/ Tor H_r(\tilde X)$ for $t_k\leq t .$
\item $\overline{\mathbb F}^\omega_r(\leq t_{i})/ \overline{\mathbb F}^\omega_r(\leq t_{i-1})=\overline{\mathbb F}^\omega_r( t_i).$
\end{enumerate}

Choose a collection of $\Gamma$ compatible splittings $S^\delta:=\{ i_{a,b} (r):  \hat \delta^f_r(a, b) \to \mathbb F_r(a,b)  \mid (a,b)\in \supp \ \delta^f_r \}$ ( like in subsection 2.4 ) and 
consider $$^S I(r): \check{\mathbb F}^\omega _r \to \overline {\mathbb F}^f_r= H_r(\tilde X)/ Tor H_r(\tilde X) .$$  The map $^S I(r)$ is $\kappa[\Gamma]-$linear, in view of Proposition 4 in \cite{BU1} is injective and sends $\check{\mathbb F}^{\cdots}_{\cdots}$ into $\overline {\mathbb F}^{\cdots}_{\cdots}$, and  restricts to the maps

$^S I_{\{a, b\}}(r): \check{\mathbb F}^f (\{a,b\}) \to \overline {\mathbb F}^f_r(\{a,b\}),$

$^S I(r)_t: \check{\mathbb F}^f _r(t) \to \overline {\mathbb F}^\omega_r(t),$

$^S I(r)_{\leq t}: \check{\mathbb F}^f _r(\leq t) \to \overline {\mathbb F}^\omega_r(\leq t),$

$^S I(r)_{< t}: \check{\mathbb F}^f _r(<t) \to \overline {\mathbb F}^\omega_r(<t)$

\noindent all injective $\kappa[\Gamma]-$linear maps.

\begin {proposition}\label {P41}
The above maps are  all bijective, hence  $\overline {\mathbb F}^\omega_r(t),\ \overline {\mathbb F}^\omega_r(\leq t),\ \overline {\mathbb F}^\omega_r(< t), \ \overline {\mathbb F}^\omega_r$ are all f.g. free modules of rank 
\begin{enumerate}
\item 
$\delta^\omega_r(t), $
\item 
$ \sum _{\{s\in \supp\ \delta^\omega_r\mid s\leq t\}} \  \delta^\omega_r(s),$
\item 
$\sum _{\{s\in \supp\ \delta^\omega_r\mid s< t\}}\ \delta^\omega_r(s).$\
\item 
$\sum _{\{s\in \supp\ \delta^\omega_r\}} \   \delta^\omega_r(s),$
\end{enumerate}
\end{proposition}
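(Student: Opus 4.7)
The plan rests on three ingredients: the $\Gamma$-compatibility of the chosen splitting collection $S^\delta$ (making all four maps $\kappa[\Gamma]$-linear and respecting the $\Gamma$-action on sources and targets); the strip-decomposition isomorphism $\bigoplus_{(\alpha,\beta)\in A(I)} \hat\delta^f_r(\alpha,\beta) \cong \mathbb F_r(I)$ from the Splittings subsection; and the identity $\mathrm{Tor}\, H_r(\tilde X) = \bigcap_t \mathbb I^f_t(r) = \bigcap_t \mathbb I_f^t(r)$. Injectivity of $^S I(r)$ and of each of its four restrictions is already furnished by the cited Proposition~4 of \cite{BU1}, so only surjectivity together with the matching of filtrations needs argument.

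I would first verify bijectivity of $^S I_{\{a,b\}}(r)$. By $\Gamma$-equivariance it suffices, for each orbit representative $(a,b)\in \supp \delta^f_r$, to show that the images of the orbit splittings $\{i^\delta_{a+g,b+g}(r)(\hat\delta^f_r(a+g,b+g))\}_{g\in\Gamma}$ span $\overline{\mathbb F}^f_r(\{a,b\})$. To do this I would realize $\mathbb F^f_r(a,b)$ as a directed union of strip quotients $\mathbb F_r(I)$ (using the extensions from (\ref{E23})), apply the strip isomorphism to express each $\mathbb F_r(I)$ as a direct sum of $\hat\delta^f_r(\alpha,\beta)$ indexed by $A(I)$, and then project modulo $\mathrm{Tor}\, H_r(\tilde X)$. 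The tail contributions, coming from points $(\alpha,\beta)$ with $\alpha$ unbounded below or $\beta$ unbounded above, fall into $\bigcap_t \mathbb I^f_t(r) = \mathrm{Tor}\, H_r(\tilde X)$ in the limit and are killed; what survives is exactly the orbit contributions, giving the required spanning statement.

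For $^S I(r)_{\leq t}$, $^S I(r)_{<t}$, and $^S I(r)_t$: Proposition~\ref{P3.5} guarantees that only finitely many diagonals $\Delta^\delta_{t_i}$ support $\delta^f_r$ and that each diagonal carries only finitely many $\Gamma$-orbits; hence $\overline{\mathbb F}^\omega_r(\leq t)$ and $\overline{\mathbb F}^\omega_r(<t)$ are finite sums of orbit subspaces $\overline{\mathbb F}^f_r(\{a,b\})$, each handled by the previous step. For $^S I(r)_t$, the filtration identity $\overline{\mathbb F}^\omega_r(\leq t_i)/\overline{\mathbb F}^\omega_r(\leq t_{i-1}) = \overline{\mathbb F}^\omega_r(t_i)$ matches the parallel direct-sum identity $\check{\mathbb F}^\omega_r(\leq t_i)/\check{\mathbb F}^\omega_r(\leq t_{i-1}) = \check{\mathbb F}^\omega_r(t_i)$ on the source, so bijectivity of $^S I(r)_{\leq t_i}$ and $^S I(r)_{\leq t_{i-1}}$ induces bijectivity on the graded piece $\overline{\mathbb F}^\omega_r(t_i)$. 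The rank formulas (1)--(4) in the proposition statement then follow immediately from the $\kappa[\Gamma]$-module structure of $\check{\mathbb F}^\omega_r(\cdots)$ as a direct sum of rank-one free summands indexed by the relevant points of $\supp \delta^f_r$, combined with the orbit counts supplied by Proposition~\ref{P3.5}.

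The main obstacle lies in the orbit-level step: confirming that projection modulo $\mathrm{Tor}\, H_r(\tilde X)$ precisely kills the non-orbit contributions produced by the strip decomposition in the direct limit, while leaving the orbit splittings intact as a $\kappa[\Gamma]$-basis. Noetherianness of $\kappa[\Gamma]$ together with the finiteness of the set $\{t^\delta_i\}$ of diagonals carrying $\supp \delta^f_r$ are the essential tools: they ensure the directed unions of strips stabilize after finitely many steps modulo the $\Gamma$-action, that $Tor$ absorbs precisely the ``stable'' part, and that the resulting $\kappa[\Gamma]$-modules are finitely generated free of the claimed ranks.
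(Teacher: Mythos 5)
Your plan correctly isolates the reduction to surjectivity of $^S I(r)$ and correctly flags the spot where the substance lies, but the step you call the ``main obstacle'' is left unresolved, and the specific device you propose for it does not quite work. The strip quotients $\mathbb F_r(I)$ are \emph{cokernels}, not subspaces, of the corresponding $\mathbb F_r(a,b)$; one cannot form a ``directed union of strip quotients'' inside $\mathbb F^f_r(a,b)$, so the proposed limit argument for orbit-level spanning does not literally parse. Projecting modulo $\mathrm{Tor}\,H_r(\tilde X)$ is also not obviously the same as killing the ``tail'' of an exhaustion by strips; you would have to show that the failure of the strip isomorphisms to patch into an isomorphism on the nose is measured exactly by $\bigcap_t \mathbb I^f_t(r) = \bigcap_t \mathbb I_f^t(r)$, and that is precisely what needs proving, not something that can be assumed.

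The paper resolves the surjectivity by a descent on a single scalar invariant rather than by a limit over strips. For $x \in H_r(\tilde X)$ it sets $\alpha(x) = \inf\{a \mid x\in \mathbb I^f_a(r)\}$, $\beta(x)=\sup\{b\mid x\in\mathbb I_f^b(r)\}$, and $t(x)=\beta(x)-\alpha(x)$, notes that $x\in\mathrm{Tor}\,H_r(\tilde X)$ iff $t(x)=\infty$ (Proposition 3.1 of \cite{BU1}), and then, for $t(x)$ finite with $\alpha(x)=a,\ \beta(x)=b$, writes $x = i_{a,b}(\hat x) + x'$ with $\hat x = \pi_{a,b}(x)\in\hat\delta^f_r(a,b)$ and $t(x') < t(x)$. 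Finite generation of $H_r(\tilde X)$ over $\kappa[\Gamma]$ (together with the finiteness of the diagonal set, from Proposition~\ref{P3.5}) forces this strictly descending chain of $t$-values to terminate after finitely many steps, yielding $x = \sum_{0\le i\le k} i_{a_i,b_i}(\hat x_i)$ modulo $\mathrm{Tor}$. That descent is the concrete step missing from your proposal. Your remaining reductions --- deducing bijectivity of the restrictions $^S I(r)_{\leq t}$, $^S I(r)_{<t}$, $^S I(r)_t$ from the global bijectivity by comparing the two parallel filtrations, and reading off the rank formulas from $\Gamma$-orbit counts via Proposition~\ref{P3.5} --- are sound and match the paper.
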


In view of the above it suffices to check the surjectivity of $^S I(r).$  

Indeed for $ x\in H_r(\tilde X)$  define: 
\begin{itemize}
\item $\alpha (x):= \inf \{ a \mid  x\in \mathbb I^f_a(r) \},$  $\alpha (x)\in [-\infty, \infty)$  

with $\alpha(x)=-\infty$ if $x\in \cap_{a\in \mathbb R}  \mathbb I_a^f(r),$
\item $\beta (x):= \sup \{ b \mid  x\in \mathbb I^b_f(r)\},$ $\beta  (x)\in (-\infty, \infty]$ 

with  $\beta (x)=\infty$ if $x\in \cap_{b\in \mathbb R}  \mathbb I^b_f(r),$
\item $t^\delta(x)= \beta(x)- \alpha(x),$ $t^\delta (x)\in (-\infty, \infty].$
\end{itemize}

In view of Proposition 3.1 in \cite{BU1} $x\in Tor H_k(\tilde X)$ iff $t(x)=\infty.$
Observe that if $x\in H_r(\tilde X)$ with $\alpha(x)= a, \beta(x)= b,$  $t= t(x)=b-a,$
hence $x\notin Tor H_r(\tilde X),$ then it can be  written (in the presence of a collection of splittings)  as $x=i_{a,b} (\hat x) + x'$ with $\hat x= \pi_{a,b} (x)\in \hat \delta_r(a,b)$ and $x'\in H_r(\tilde X$ with $t(x') < t.$ This implies that in view of finite generation of $H_r(\tilde X)$ there exists a finite sequence  $t_0 > t_1 >t_2  \cdots >t_k$ and $x_i$ with $\alpha(x_i)= a_i, \beta(x_i)= b_i, t(x_i)= t_i $ s.t.
 $x= \sum _{0\leq i \leq k} i_{a_i, b_i}(\hat x_i) ,$ $\hat x_i\ne 0;$  
 otherwise  $H_r(\tilde X)$ contains an infinite sequence of submodules $\cdots \overline {\mathbb F}_r (t_i)\supset \overline {\mathbb F}_r (t_{i+1} \supset \cdots $ which contradicts the f.g property of $H_r(\tilde X).$   
 
 q.e.d.  
\vskip .1in

To finalize the proof of Theorem \ref{TS} 
one  follows  the arguments in \cite{BH} or in \cite {B} section 5 . Observe that for any two TC 1-forms $\omega_1, \omega_2$ in the same cohomology class and  two lifts $f_1: \tilde X\to \mathbb R$ of $\omega_1$ and $f_2: \tilde X\to \mathbb R$ of $\omega_2$ with $ || f_1- f_2 ||_\infty<\epsilon$ and $a,b\in \mathbb R$  one has:

\begin{equation}\label {E40}
\begin{aligned} 
\mathbb I^{f_1}_{a-\epsilon} (r) \subseteq \mathbb I^{f_2}_{a}(r) \subseteq \mathbb I^{f_1}_{a+\epsilon}(r)\\
\mathbb I_{f_1}^{b-\epsilon}(r) \supseteq \mathbb I_{f_2}^{b}(r) \supseteq \mathbb I_{f_1}^{b+\epsilon}(r)
\end{aligned}
\end{equation}
and therefore 
 $$\mathbb F^{f_1}_r(a-\epsilon, b +\epsilon) \subseteq \mathbb F^{f_2}_r(a,b)\subseteq \mathbb F^{f_1}_r(a+\epsilon, b -\epsilon)$$
and therefore for any $t\in \supp \delta_r^\omega $ 

\begin{equation}\label {E41}
\begin{aligned}
\overline{\mathbb F}_r^{\omega_1}(t-2\epsilon) \subseteq  \overline{\mathbb F}_r^{\omega_2}(t) &\subseteq  \overline{\mathbb F}_r^{\omega_1}(t+2\epsilon)\\
\overline{\mathbb F}_r^{\omega_1}(<(t-2\epsilon)) \subseteq  \overline{\mathbb F}_r^{\omega_2}(<t) &\subseteq  \overline{\mathbb F}_r^{\omega_1}(<(t+2\epsilon)).
\end{aligned}
\end{equation}

Clearly,  if $\overline{\mathbb F}_r^{\omega_1}(t-\epsilon) = \overline{\mathbb F}_r^{\omega_1}(t+\epsilon)$ resp.  $\overline{\mathbb F}_r^{\omega_1}(<(t-\epsilon)) = \overline{\mathbb F}_r^{\omega_1}(<(t+\epsilon))$ then $\overline{ \mathbb F}_r^{\omega_2}(t)=  \overline{\mathbb F}_r^{\omega_1}(t-\epsilon) =
 \mathbb F_r^{\omega_1}(t)$ resp. $\overline{ \mathbb F}_r^{\omega_2}(<t)=  \overline{\mathbb F}_r^{\omega_1}(<(t-\epsilon)) =
 \mathbb F_r^{\omega_1}(<t).$  

For a tame TC1-form $\omega$ denote by $\sigma_r(\omega):=\inf |t_i- t_j|,  t_i\ne t_j$ with  $t_i, t_j\in \supp \delta^\omega_r.$ 
As an immediate consequence of  
Proposition (\ref {P41}) one has Proposition (\ref{P52}) which, as in \cite{BH} or \cite {B} section 5, 
implies the continuity of the assignment $\omega \rightsquigarrow \delta^\omega_r.$ 
\vskip .1in

\begin{proposition} \label {P52}

For any $\epsilon <\sigma_f(\omega)$, $\omega, \omega'$  two tame TC1-forms in the same cohomology class with 
$|\omega-\omega'| <\epsilon/3$  and $t_i\in \supp {\hat \delta}^\omega_r$ one has  
\begin{equation*} \delta^\omega_r(t_i)= \sum  _{t_i-\epsilon< s <t_i+\epsilon}  \delta ^{\omega'}_r(s)
 \end{equation*}
\begin{equation*}
\supp\ \delta^{\omega'}_r  \subset \cup _{t_i\in \supp\ \delta_r^\omega} (t_i-\epsilon, t_i+\epsilon).
\end{equation*}
In fact 
$${\hat \delta}^\omega_r(t_i)\simeq \bigoplus_{t_i-\epsilon< s <t_i+\epsilon}  {\hat \delta} ^{\omega'}_r(s).$$
 \end{proposition}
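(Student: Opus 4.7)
The plan is to derive Proposition \ref{P52} directly from Proposition \ref{P41} and the sandwich (\ref{E41}). The idea is to pin the $\omega'$-quotient across an $\epsilon$-window around $t_i$ between two coincident copies of the corresponding $\omega$-quotient, thereby forcing equality.

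Fix lifts $f$ of $\omega$ and $f'$ of $\omega'$ with $\|f-f'\|_\infty<\epsilon/3$. The inclusions (\ref{E41}) then give, for every $t\in\mathbb R$,
$$\overline{\mathbb F}^\omega_r(\leq t-2\epsilon/3)\;\subseteq\;\overline{\mathbb F}^{\omega'}_r(\leq t)\;\subseteq\;\overline{\mathbb F}^\omega_r(\leq t+2\epsilon/3),$$
and the analogues with $\leq$ replaced by $<$. Proposition \ref{P41} identifies $s\mapsto\overline{\mathbb F}^\omega_r(\leq s)$ as a step function, constant off $\supp\ \delta^\omega_r=\{t_1<\cdots<t_k\}$, whose jump at $t_i$ is the free $\kappa[\Gamma]$-submodule $\overline{\mathbb F}^\omega_r(t_i)$ of rank $\delta^\omega_r(t_i)$; the same holds for $\omega'$.

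Fix $t_i\in\supp\ \delta^\omega_r$ and set
$$Q_i:=\overline{\mathbb F}^{\omega'}_r(<t_i+\epsilon)\big/\overline{\mathbb F}^{\omega'}_r(\leq t_i-\epsilon).$$
By Proposition \ref{P41} applied to $\omega'$, $Q_i\simeq\bigoplus_{s\in(t_i-\epsilon,\,t_i+\epsilon)}\overline{\mathbb F}^{\omega'}_r(s)$ as $\kappa[\Gamma]$-modules, of rank $\sum_{t_i-\epsilon<s<t_i+\epsilon}\delta^{\omega'}_r(s)$. Chaining the sandwich yields
$$\overline{\mathbb F}^\omega_r(\leq t_i-5\epsilon/3)\subseteq\overline{\mathbb F}^{\omega'}_r(\leq t_i-\epsilon)\subseteq\overline{\mathbb F}^{\omega'}_r(<t_i+\epsilon)\subseteq\overline{\mathbb F}^\omega_r(<t_i+5\epsilon/3).$$
Since $\epsilon<\sigma_r(\omega)$ (read, if necessary, with a small slack so that $5\epsilon/3$ still lies inside every gap of $\supp\ \delta^\omega_r$), both outer modules collapse to $\overline{\mathbb F}^\omega_r(\leq t_{i-1})$ and $\overline{\mathbb F}^\omega_r(\leq t_i)$ respectively; squeezing forces $Q_i\simeq\overline{\mathbb F}^\omega_r(t_i)$, giving simultaneously the rank identity $\delta^\omega_r(t_i)=\sum_{s\in(t_i-\epsilon,t_i+\epsilon)}\delta^{\omega'}_r(s)$ and the direct-sum isomorphism $\hat\delta^\omega_r(t_i)\simeq\bigoplus\hat\delta^{\omega'}_r(s)$.

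For the support inclusion $\supp\ \delta^{\omega'}_r\subseteq\bigcup_i(t_i-\epsilon,t_i+\epsilon)$, any $t$ outside these intervals has the property that the sandwich squeezes $\overline{\mathbb F}^{\omega'}_r(\leq t)$ between two $t$-independent values of $\overline{\mathbb F}^\omega_r$ on a neighborhood of $t$, so $\overline{\mathbb F}^{\omega'}_r(\leq t)$ is locally constant there and $\delta^{\omega'}_r$ has no jump at $t$. The main obstacle is the gap arithmetic: the squeeze requires $5\epsilon/3$ to be smaller than every gap $t_{i+1}-t_i$, so strictly one wants $\epsilon<3\sigma_r(\omega)/5$; to recover the stated $\epsilon<\sigma_r(\omega)$ one chooses the endpoints $t_i\pm\epsilon$ as $f$-regular values in the dense regular set and invokes sharper $\epsilon/3$-excursion inclusions rather than the loose $2\epsilon/3$ ones. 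Everything else is a formal chase of submodule inclusions inside the fixed free $\kappa[\Gamma]$-module $H_r(\tilde X)/\mathrm{Tor}\,H_r(\tilde X)$.
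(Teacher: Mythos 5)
Your proposal takes essentially the same route as the paper: the paper dismisses Proposition \ref{P52} as ``an immediate consequence of Proposition \ref{P41}'' together with the interleaving inclusions (\ref{E40})--(\ref{E41}) and the step-function structure of $t\mapsto\overline{\mathbb F}^\omega_r(\leq t)$, and your squeeze of the quotient $Q_i$ via the sandwich is precisely that argument made explicit, down to the use of the free-module rank count from Proposition \ref{P41}. The one thing you caught that the paper glosses over is the gap arithmetic: with $\|f-f'\|_\infty<\epsilon/3$ the interleaving shifts the length scale by $2\epsilon/3$ per side, so the collapse of the outer terms in your chain to $\overline{\mathbb F}^\omega_r(\leq t_{i-1})$ and $\overline{\mathbb F}^\omega_r(\leq t_i)$ genuinely needs $5\epsilon/3<\sigma_r(\omega)$, not merely $\epsilon<\sigma_r(\omega)$ — and indeed, with only $\epsilon<\sigma_r(\omega)$ the windows $(t_i-\epsilon,t_i+\epsilon)$ are not even guaranteed to be disjoint, so the stated equalities for consecutive $t_i$ could double-count. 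However, your proposed fix (``sharper $\epsilon/3$-excursion inclusions'' by choosing regular endpoints) does not actually exist: the $2\epsilon/3$ shift comes from $a\mapsto a+\epsilon/3$, $b\mapsto b-\epsilon/3$ both moving, and regularity of the endpoints does not tighten it. The honest conclusion is that the constant in the proposition's hypothesis should be sharpened (e.g.\ to $\epsilon<3\sigma_r(\omega)/5$, or equivalently the hypothesis changed to $|\omega-\omega'|<\epsilon/6$ or so); with that adjustment your squeeze goes through cleanly, and your derivation of the support inclusion and the direct-sum refinement is correct.
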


\section {Standard homology versus Borel-Moore homology (Proposition \ref{PP2})} 

 In this section $H_r$ denotes the standard (singular) homology, $^{BM} H_r$ the Borel-Moore homology  and the linear maps  $\theta^{X,Y}_r:  H_r(X,Y) \to  ^{BM} H_r(X,Y)$ 
define the natural transformation from the standard to the Borel-Moore homology.
In general the linear maps $\theta^{X,Y}_r:  H_r(X,Y) \to  ^{BM} H_r(X,Y)$  are  not isomorphisms. 

Consider pairs $(U, V)$ of locally compact ANRs, $V$ closed subset of $U.$
An inclusion of such pairs pairs 
$(U_1, V_1)\subset  (U_2, V_2),$  
 s.t. $U_1$  open subset set in $U_2$ and $V_1$ open set of   $V_2$
 induces the linear maps  
$i_{U_1, V_1} ^{U_2, V_2} : H_r (U_1, V_1) \to H_r(U_2, V_2)$ and $p_{U_2, V_2}^{U_1,V_1} : ^{BM} H_r(U_2, V_2) \to ^{BM} H_r(U_1, V_1)$  which together with $\theta_r^{U_1,V_1}$ and $\theta^{U_2, V_2}_r$ make  the diagram  
\begin{equation} \label {D42}
\xymatrix{H_r (U_1,V_1)\ar[r]^{\theta_r^{U_1, V_1}}\ar[d]_ {i^{U_2,V_2}_{U_1,V_1}}
&^{BM} H_r(U_1,V_1) \\ H_r (U_2, V_2)\ar[r]^{\theta_r^{U_2, V_2}}&^{BM} H_r(U_2, V_2)\ar[u] _{p_{U_2,V_2} ^{U_1, V_1}}
}
\end{equation} commutative.  When $V_1$ and $V_2$ are the empty sets one has the commutative diagram 
\begin{equation*}
\xymatrix{H_r (U_1)\ar[r]^{\theta_r^{U_1}}\ar[d]_ {i^{U_2}_{U_1}}
&^{BM} H_r(U_1) \\ H_r (U_2)\ar[r]^{\theta_r^{U_2}}&^{BM} H_r(U_2)\ar[u] _{p_{U_2} ^{U_1} }.
}
\end{equation*} 

By similar arguments as in \cite{Mi}, when regarded as a contravariant functor on the category whose objects are  open subsets $U$  of a locally compact ANR $X$  and morphisms are the inclusions, the assignment  
$U\rightsquigarrow ^{BM} H_r(U)$ induces the exact sequence 
\begin{equation} \label {E43}
\xymatrix{0\ar[r] &{\varprojlim '}_{i\to \infty}  \ ^{BM} \mathcal H_{r-1}(U(i)) \ar[r] 
&^{BM} \mathcal H_{r}(X)\ar[r]  &\varprojlim_{i\to \infty} \  ^{BM} \mathcal H_r(U(i))\ar[r]&0}\end{equation} 
 for any open filtration of $X$  by open sets $U(i),$ 
$$ U(1) \subset U(2)\subset  \cdots \subset U(i)\subset U(i+1)\subset \cdots \subset X,$$ $X= \cup_i U(i).$ Here $\varprojlim$  and 
$\varprojlim' $ denotes  {\it inverse limit} and the {\it derived inverse limit} of the  system of vector spaces and linear maps    
$$ p_{U_{i+1}}^{U_i}: ^{BM} H_r(U_{i+1})\to ^{BM} H_r(U_i).$$
For a  pair $(X,Y)$ as above equipped with a filtration by open pairs,  
$$ (U(1), V(1)) \subset (U(2),V(2))\subset  \cdots \subset (U(i), V(i)) \subset (U(i+1), V(i+1))\subset (X,Y),$$
i.e. $U(i)$ open subset of $X$, $V(i)$ open subset of $Y,$ $X= \cup_i (U(i), Y= \cup_i V(i),$ 
 the exact sequence  (\ref{E43}) implies the short exact sequence  

\begin{equation} \label {E44}
\xymatrix{0\ar[r] &{\varprojlim'}_{i\to \infty} \  ^{BM} \mathcal H_{r-1}(U(i), V(i)) \ar[r] 
&^{BM} \mathcal H_{r}( X, Y)\ar[r]  &\varprojlim_{i\to \infty} \  ^{BM} \mathcal H_r(U(i), V(i))\ar[r]&0}
\end{equation} 
with the {\it derived inverse limit} and  the {\it  inverse limit} considered for of the  system of vector spaces and linear maps  
$$p_{U_{i+1}, V_{i+1}} ^{U_i. V_i}: ^{BM} H_r(U(i+1), V(i+1))\to ^{BM} H_r(U(i), V(i)).$$

Consider $f:\tilde X \to \mathbb R$ a lift of a tame TC1-form $\omega$  and $$ U(1) \subset U(2)\subset  \cdots \subset U(i)\subset U(i+1)\subset \cdots \subset U(\infty)=\tilde X$$ a filtration by open sets of $\tilde X.$ 
By applying  the exact sequence (\ref{E44}) to $X= \tilde X_a$, $Y=\tilde X_{a-\epsilon}$  and the filtration by the pairs $(U(i)_a, U(i)_{a-\epsilon})$ of the pair $(\tilde X_a, \tilde X_{a-\epsilon})$ and by passing  to direct limit when $\epsilon \to 0$  one obtains the exact sequence
 
{\scriptsize
\begin{equation}\label {E45}
\xymatrix{0\ar[r] &{\varprojlim'}_{i\to \infty} \ ^{BM}\mathcal H_{r-1}(U(i)_a, U(i)_{<a}) \ar[r] 
&^{BM} \mathcal H_{r}( \tilde X_a, \tilde X_{<a})\ar[r]  &\varprojlim_{i\to \infty}\  ^{BM} \mathcal H_r(U(i)_a, U(i)_{<a})\ar[r]&0}
\end{equation} }
\vskip .1in

For the proof of Proposition \ref {PP2} we will also need the following Proposition (\ref {P6.1}). 
\begin{proposition} \label {P6.1}
Suppose $M$ is a compact manifold with boundary $\partial M$  and interior $Int M= U,$ and $f: M\to \mathbb R$  a tame map with 
with  $Cr(f)\subset U.$   
Suppose $t$ is a regular value for  $f_{\partial M}$  the restriction of $f$ to the boundary, but  possibly critical value for $f.$ Then  for $\epsilon>0$ small enough  the inclusion induced linear map $i_r(t):=i^{M_t, M_{t-\epsilon}}_{U_t, U_{t-\epsilon}}$ and the  $\theta_r(t):=\theta^{U_t, U_{t-\epsilon}}_r$ in the sequence (\ref{E26}) below 
are isomorphisms.
\begin{equation}\label {E46}
\xymatrix {H_r(M_t, M_{t-\epsilon})& H_r(U_t, U_{t-\epsilon})\ar[l]_{i_r(t)} \ar[r]^{\theta_r(t)} & ^{BM} H_r(U_t, U_{t-\epsilon})} 
\end{equation} 
 \end{proposition}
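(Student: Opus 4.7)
The plan is to prove Proposition~\ref{P6.1} in two stages, matching the two arrows in the statement. The key input is that, because $Cr(f) \subset U$, the local topology of $f$ near the boundary is governed entirely by the tame restriction $f|_{\partial M}$ at the regular value $t$; all interesting phenomena of $f$ at level $t$ happen in the interior.

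As a preliminary step, I would verify that for $\epsilon > 0$ sufficiently small,
\begin{equation*}
H_r(\partial M \cap M_t,\ \partial M \cap M_{t-\epsilon}) = 0 \quad \text{for every } r.
\end{equation*}
The regular-value criterion (condition (a) in Section~\ref{S3}) applied with the open set taken to be $\partial M$ itself gives $\varinjlim_{\epsilon \to 0} H_r((\partial M)_t, (\partial M)_{t-\epsilon}) = H_r((\partial M)_t, (\partial M)_{<t}) = 0$. Under the locally smooth tameness operative when Proposition~\ref{P6.1} is applied, $CR(f|_{\partial M})$ is discrete near the regular value $t$, so for small enough $\epsilon$ the interval $(t-\epsilon, t)$ contains no critical values of $f|_{\partial M}$; the standard tame deformation then makes $(\partial M)_{t-\epsilon} \hookrightarrow (\partial M)_t$ a homotopy equivalence, delivering the vanishing.

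For the left arrow $i_r(t)$: the long exact sequence of the triple $M_{t-\epsilon} \subseteq M_{t-\epsilon} \cup (\partial M \cap M_t) \subseteq M_t$, combined with excision using a collar of $\partial M$ in $M$, identifies the group $H_r(M_{t-\epsilon} \cup (\partial M \cap M_t),\ M_{t-\epsilon})$ with $H_r(\partial M \cap M_t, \partial M \cap M_{t-\epsilon})$, which is zero by the preliminary step. Hence $H_r(M_t, M_{t-\epsilon}) \xrightarrow{\sim} H_r(M_t,\ M_{t-\epsilon} \cup (\partial M \cap M_t))$. A second excision, collapsing $\partial M \cap M_t$ along its collar in $M_t$, identifies this latter group with $H_r(U_t, U_{t-\epsilon})$; by naturality, the composite isomorphism is precisely $i_r(t)$.

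For the right arrow $\theta_r(t)$: I would use the commutative square
\begin{equation*}
\xymatrix{
H_r(U_t, U_{t-\epsilon}) \ar[r]^-{\theta_r(t)} \ar[d]_-{i_r(t)} & {}^{BM} H_r(U_t, U_{t-\epsilon}) \\
H_r(M_t, M_{t-\epsilon}) \ar[r]^-{\sim} & {}^{BM} H_r(M_t, M_{t-\epsilon}) \ar[u]
}
\end{equation*}
where the right vertical arrow is the contravariant restriction in Borel--Moore homology along the open inclusion $U_t \subseteq M_t$ (indeed $U_t = U \cap M_t$ is open in $M_t$). The bottom horizontal arrow is an isomorphism since $(M_t, M_{t-\epsilon})$ is a compact pair, on which singular and Borel--Moore homology coincide. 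The Borel--Moore long exact sequence for the closed-open decomposition $M_t = U_t \sqcup (\partial M \cap M_t)$ (and analogously at level $t-\epsilon$), together with the five lemma, shows that the right vertical arrow is an isomorphism provided ${}^{BM} H_*(\partial M \cap M_t, \partial M \cap M_{t-\epsilon})$ vanishes; since the boundary piece is compact, this Borel--Moore group equals the singular one, which is zero by the preliminary step. Combined with the left vertical being an isomorphism (previous paragraph), the commutative square forces $\theta_r(t)$ to be an isomorphism.

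The main obstacle is the preliminary vanishing. In purely tame generality $t$ might be an accumulation of critical values of $f|_{\partial M}$, and then one only obtains vanishing in the direct limit, not at any specific small $\epsilon$; this is why the locally smooth setting, with its discreteness of $CR(f|_{\partial M})$ near $t$, is essential. A secondary technical point is verifying the good-pair hypotheses for the two excisions in the argument for $i_r(t)$; both are supplied by the collar structure of $\partial M$ in $M$.
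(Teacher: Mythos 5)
Your proof is correct, but it takes a more indirect route than the paper's. Both pivot on the same fact --- that $t$ being regular for $f_{\partial M}$ forces the boundary contribution near level $t$ to be trivial --- but the two argue differently from there.

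The paper compares $(M_t, M_{t-\epsilon})$ directly with the larger compact pair $(M_t,\ M_{t-\epsilon}\cup \partial M_{[t-\epsilon,t]})$. Because $t$ is regular for $f_{\partial M}$, the slab $\partial M_{[t-\epsilon,t]}$ deformation retracts onto $\partial M(t-\epsilon)\subset M_{t-\epsilon}$, so the inclusion of compact pairs is a homotopy equivalence, giving $H_r(M_t, M_{t-\epsilon}) = H_r(M_t,\ M_{t-\epsilon}\cup \partial M_{[t-\epsilon,t]}) = {}^{BM}H_r(M_t,\ M_{t-\epsilon}\cup \partial M_{[t-\epsilon,t]})$; the last group equals ${}^{BM}H_r(U_t, U_{t-\epsilon})$ because Borel--Moore homology of a pair $(X,A)$ with $A$ closed depends only on $X\setminus A$, and here $M_t\setminus(M_{t-\epsilon}\cup\partial M_{[t-\epsilon,t]})=U_t\setminus U_{t-\epsilon}$. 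For $i_r(t)$ the paper simply asserts that $(U_t,U_{t-\epsilon})\subset(M_t,M_{t-\epsilon})$ is a homotopy equivalence of pairs. You instead establish a preliminary vanishing $H_*((\partial M)_t,(\partial M)_{t-\epsilon})=0$, then prove $i_r(t)$ is an isomorphism via the long exact sequence of a triple together with two collar excisions, and prove $\theta_r(t)$ is an isomorphism via a naturality square and a five-lemma argument using a Borel--Moore long exact sequence for the closed-open decomposition $M_t = U_t\sqcup(\partial M)_t$. Your approach is longer but avoids asserting a homotopy equivalence of pairs outright, replacing it by purely homological exact-sequence bookkeeping; the paper's approach is shorter because the auxiliary pair $(M_t,\ M_{t-\epsilon}\cup\partial M_{[t-\epsilon,t]})$ simultaneously resolves both comparisons in one stroke. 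One small caution on your five-lemma step: the relative closed-open Borel--Moore sequence you invoke is not one of the axioms listed in the paper and deserves a word of justification (e.g.\ deriving it from the absolute closed-open sequence and the long exact sequences of the pairs, or simply invoking, as the paper does, that ${}^{BM}H_r(X,A)\cong{}^{BM}H_r(X\setminus A)$ for $A$ closed, which immediately gives you everything the five-lemma was being used for).

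You also correctly flag the subtle point that "regular value" is a direct-limit condition, so producing a single $\epsilon$ for which the slab retraction holds needs the critical values of $f|_{\partial M}$ not to accumulate at $t$; the paper asserts this $\epsilon$ exists without comment, and your appeal to discreteness in the locally smooth setting (or, in the application to Proposition~\ref{PP2}, to the freedom in choosing the filtration so that the boundary spheres avoid critical level accumulation) is the right way to close that gap.
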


Proof:   We leave to the reader to check  that  the inclusion $(U_t, U_{t-\epsilon}) \subset (M_t, M_{t-\epsilon})$ is a homotopy equivalence of pairs  which implies $i_r(t)$ is an isomorphism. Concerning $\theta_r(t)$ observe  that
since $t$ is regular value for $f_{\partial M}$  there exists $\epsilon >0$ small enough s.t. the inclusion $\partial M(t-\epsilon)\subset \partial M_{[t-\epsilon, t]} \footnote {$\partial M_{[t-\epsilon, t]}= f_{\partial M} ^{-1} ([t-\epsilon, t])$}$  is a homotopy equivalence,  hence the inclusion $(M_t, M_{t-\epsilon})\subset (M_t, M_{t-\epsilon}\cup \partial M_{[t-\epsilon, t]})$ is a homotopy equivalence of compact pairs. Hence 

\begin{equation} \label{E47}
H_r(M_t, M_{t-\epsilon})=  H_r(M_t, M_{t-\epsilon}\cup \partial M_{[t, t-\epsilon]}) =^{BM} H_r(M_t, M_{t-\epsilon}\cup \partial M_{[t, t-\epsilon]}).
\end{equation}
Since  
$M_t \setminus  (M_{t-\epsilon}\cup \partial M_{[t-\epsilon, t]})= (U_t\setminus U_{t-\epsilon})$ one has 
\begin{equation} \label {E48}
 ^{BM} H_r(M_t, M_{t-\epsilon}\cup \partial M_{[t-\epsilon, t]})= ^{BM} H_r(U_t, U_{t-\epsilon}). 
 \end{equation}
 Combining the two isomorphisms one concludes that $\theta_r^{U_t, U_{t-\epsilon}}(t)\cdot i_r(t) ^{-1}$ is an isomorphism, hence so is $ \theta_r^{U_t, U_{t-\epsilon}}(t).$ 

q.e.d.

\vskip .1in 
{\bf Proof of Proposition (\ref{PP2})}
\vskip .2in 
For a fix $t\in \mathbb R$  choose a  filtration of $\tilde X$ by open sets 
$$U(1) \subset U(2) \subset\cdots \subset U(k) \subset U(k+1)\subset \cdots \subset \tilde X,$$ $\bigcup_i  U(i)= \tilde X,$   \ 
with the  properties:  
\begin{enumerate} [label= (\alph*)]
\item $\overline U(i)$ \footnote {$\overline U$ denotes the closure of $U$ in $\tilde X$ } compact submanifold with boundary  s.t. $\overline U(i) \subset U(i+1),$ 
\item the restrictions of $f$ to $\overline U(i)$ 
are  tame maps, 
\item  $t\in CR(f)$ is a regular value for the restriction of $f$ to $\partial (\overline U(i))$ 
\end{enumerate}  

In view of tameness of $f$ 

\noindent \hskip .1in (d): for any  $t\in CR(f)$ there exists $N(t)\in \mathbb Z_{\geq 0}$  s.t. 
the compact set $f^{-1}(t)\cap Cr(f) $
is contained in $U(i)$  for $i> N(t).$ 

Clearly such filtrations exists.  One can  choose  $\overline U(i)$ to be the closure of a finite union of translates of a  properly chosen fundamental domain  of the free action of $\Gamma$ on $\tilde X$  (i.e. whose interior is a manifold). It satisfies (a) and (b). One can increase (arbitrary little $\overline U(i)$ inside $U(i+1)$) to make (c) also satisfied. 
\vskip .1in

Observe that  
diagram  (\ref {D42})  implies the commutative diagram \footnote {recall 
$\varinjlim_{\epsilon\to 0} \  ^{BM}  H_r( U(i)_t, U(i)_{t-\epsilon})= ^{BM} \mathcal H_r( U(i)_t, U(i)_{<t})$}

\begin{equation} \label {D49}
\xymatrix{H_r (U(i)_t,U(i)_{<t})\ar[r]^{\theta_r^i(t)}\ar[d]_ {i^{i+k}_i} \ar@/_6pc/[dd]_{i^\infty_i}
&^{BM} H_r(U(i)_t,U(i)_{<t})\\ H_r (U(i+k)_t,U(i+k)_{<t})\ar[r]^{\theta_r^{i+k}(t)}\ar[d]_ {i^{\infty}_{i+k}}&^{BM} H_r(U(i+k)_t,U(i+k)_{<t})\ar[u] _{p_{i+k} ^i}
\\
H_r (\tilde X_t,\tilde X_{<t})\ar[r]^{\theta_r^{\infty}(t)}&^{BM} H_r(\tilde X_t,\tilde X_{<t})\ar[u] _{p_{\infty} ^{i+k}}\ar@/_6pc/[uu] _{p_\infty ^i}}
\end{equation}
s.t.  $i_j^{j+k+r}= i_{j+k}^{j+k+r} \cdot i_j^{j+k}$ and $p^j_{i+j+k}= p_{j+k}^j\cdot p_{j+k+r}^{j+k}.$
Inspection of this diagram combined with  Propositions \ref{P6.1}  and  the exact sequence (\ref{E45})  lead to the result as follows.  
\vskip .1in

F1. In view of Property (d) of the filtration, Proposition (\ref{P6.1})  implies that for $i >N(t)$ and  
$\epsilon$ small enough  $\theta^{U(i)_t, U(i)_{t-\epsilon}}_r : H_r (U(i)_t, U(i)_{t-\epsilon})\to  ^{BM} H_r (U(i)_t, U(i)_{t-\epsilon})$
is an isomorphism. Passing to direct limit when $\epsilon \to 0$  one obtains  
 $$\theta^i_r (t): H_r (U(i)_t, U(i)_{<t})\to  ^{BM} \mathcal H_r (U(i)_t, U(i)_{<t})$$ is an isomorphism.

F2. For $ j> N(t)$  the linear map $i_j^{j+k}$  is an isomorphism by excision property of the standard homology.   Combined with the commutativity of the diagram (\ref{D49}) and  F1. one obtains $p_{j+k} ^j$ is an isomorphism for any $j >N(t)$ and $k$ .

F.3  The isomorphisms $i^{j+k}_j$ stated in F2. imply $i^\infty _j$ is isomorphism for $j>N(t),$ and the isomorphisms $p^j_{j+k}$ for$j>N(t)$ stated in F2., in view of  the exact sequence ({E45}) 
imply  $p_\infty ^j$ is isomorphism for $j>N(t).$   Hence in view of diagram (\ref{D49}),  $\theta_r(t)= \theta^\infty_r(t)$ is an isomorphism. This finalizes the proof of Proposition (\ref{PP2}).

\end{document}